\newtheorem{thm}{Theorem}[section]
\theoremstyle{plain}
\newtheorem{corollary}[thm]{Corollary}
\newtheorem{definition}[thm]{Definition}
\newtheorem{lemma}[thm]{Lemma}
\newtheorem{proposition}[thm]{Proposition}
\newtheorem{remark}[thm]{Remark}
\newcommand{\R}{\mathbb{R}}
\newcommand{\Rn}{\mathbb{R}^{n}}
\newcommand{\pa}{\partial}
\newcommand{\Om}{\Omega}
\numberwithin{equation}{section}
\newcommand{\vt}{\vartheta}
\newcommand{\ve}{\varepsilon}
\newcommand{\vf}{\varphi}
\begin{document}
\title[Dirichlet Problem]{Estimates of the Green function  and the initial-Dirichlet problem for the heat equation in sub-Riemannian spaces}

\author{Nicola Garofalo}
\address
{Dipartimento di Ingegneria Civile, Edile e Ambientale (DICEA), Universit\`a di Padova, via Trieste 63, 35131
Padova, ITALY}%
\email[Nicola Garofalo]{nicola.garofalo@unipd.it}%

\thanks{First author supported in part by a grant ``Progetti d'Ateneo, 2014,'' University of Padova.}

\author{Isidro H. Munive}
\address{Center for Mathematical Research, CIMAT AC, Guanajuato, 36000 Guanajuato, Mexico}
\email{imunive@cimat.mx}%

\dedicatory{Dedicated to the memory of Gene Fabes}

\begin{abstract}
In a cylinder $D_T = \Om \times (0,T)$, where $\Om\subset \Rn$, we examine the relation between  the $L$-caloric 
measure,  $d\omega^{(x,t)}$, where $L$ is the heat operator associated with a system of  vector fields of H\"ormander type, and the measure $d\sigma_X\times dt$, where  $d\sigma_X$ is the intrinsic $X$-perimeter measure. The latter constitutes the appropriate replacement for the standard surface measure on the boundary
and plays a central role in sub-Riemannian geometric measure theory. Under suitable assumptions on the domain $\Om$ we establish the mutual absolute continuity of $d\omega^{(x,t)}$ and $d\sigma_X\times dt$. We also derive the solvability of the initial-Dirichlet problem for $L$ with boundary data in appropriate $ L^p$ spaces, for every $p>1$. 
\end{abstract}
\maketitle

\section{Introduction}
In this paper we study the initial-Dirichlet problem in a space-time cylinder $D_T = \Om \times (0,T)$ for the heat equation associated with a system $X=\left\{X_1,\ldots,X_m\right\}$ of $C^{\infty}$ vector fields in $\Rn$ satisfying H\"ormander's finite rank condition
\begin{equation}\label{frc}
\text{rank Lie}\ [X_1,\ldots,X_m] \equiv n.
\end{equation}
The heat operator in $\R^{n+1}$ that we are interested in is defined as follows
\begin{equation}
\label{HeatOp}
L = -\sum^m_{j=1}X^\star_jX_j-\frac{\pa}{\pa t},
\end{equation}
where $X^\star_j$ denotes the formal adjoint of $X_j$. Notice that the time-independent part of $L$ is in divergence form.
The results that we establish should be seen as the sub-Riemannian counterpart of those obtained by Fabes and Salsa in \cite{FSa} for the classical heat equation. In that paper the authors proved the unique solvability of the initial-Dirichlet problem in a cylindrical domain with Lipschitz cross-section, lateral data in $L^p$, $p > 2$, and zero initial values. They also showed that there is a Poisson kernel with the property that its $L^2$-averages over parabolic rectangles are equivalent to $L^1$-averages over the same sets.

In the non-Euclidean context of the present paper the class of Lipschitz domains is in essence meaningless, due to the generic presence of characteristic points on the lateral boundary of the cross-section $\Om$, i.e., points at which the system $X = \{X_1,...,X_m\}$ becomes tangent to the boundary $\pa \Om$. At such points a host of negative phenomena occur, see the discussion below and also the papers \cite{CG1} and \cite{CGN4}. Instead, we work with cylinders in which the cross-section $\Om$ satisfies some natural assumptions that, in the time independent case, allow to successfully solve the Dirichlet problem in $L^p$, for every $p>1$, see \cite{CGN2}, \cite{CGN3} and \cite{CGN4}.  For such class of cylinders, we prove the corresponding solvability of the parabolic initial-Dirichlet problem for boundary data in $L^p$, for every $p>1$, see Theorems \ref{HolderK} and \ref{HolderP} below.

Notice that when $X = \{\pa_{x_1},...,\pa_{x_n}\}$, then \eqref{HeatOp} reduces to the standard heat  operator $L = \Delta - \frac{\pa}{\pa t}$. A genuinely non-Euclidean situation of prototypical interest for the results in this paper is when the system $X$ is a basis of the bracket generating layer of a Carnot group $\mathbb G$ of step $r>1$. In such case we have $X_j^\star = - X_j$, and thus \eqref{HeatOp} becomes the heat operator $L = \sum^m_{j=1}X_j^2 -\frac{\pa}{\pa t}$ on $\mathbb G\times \R$, associated with the sub-Laplacian $ \sum^m_{j=1}X_j^2 $, see \cite{F2}. In the general case, we have $X_j^\star = - X_j + a_j$, where $a_j = - \operatorname{div} X_j$, and we can thus write 
\[
L  = \sum^m_{j=1} X_j^2 + X_0,
\]
where $X_0 = -\frac{\pa}{\pa t}- \sum_{j=1}^m a_j X_j $. Notice that in this situation we can write $X_0^\star = - X_0 + a_0$, with $a_0 = \sum_{j=1}^m \left(X_j a_j - a_j^2\right)$. We remark that, under the assumption \eqref{frc}, the system $\{X_0,X_1,...,X_m\}$ satisfies the finite rank condition with dimension $n+1$, i.e., the Lie algebra generates the whole of $\R^{n+1}$, see the introduction of \cite{H}.

For a domain $D\subset\R^{n+1}$, we let $\pa_{p}D$ denote the parabolic boundary of $D$; i.e., $\pa_p D$ is the set of points on the topological boundary of $D$ that can be connected to some interior point of $D$ by a closed curve having strictly increasing $t$-coordinate. 
In this work we focus on cylindrical  domains of the form 
\begin{equation}
\label{OmegaSet}
D_T=\Omega \times(0,T), 
\end{equation}
where $\Omega\subset\mathbb{R}^n$ is a connected bounded open set, and $T>0$.
With $S_T = \partial \Omega \times (0,T)$, it is well-known that the parabolic boundary of such $D_T$ is given by
\[
\partial_p D_T = S_T \cup (\Omega \times \{0\}).
\]
We denote by $L^* = -\sum^m_{j=1}X^*_jX_j + \frac{\pa}{\pa t}$ the formal adjoint of \eqref{HeatOp}. For a domain $D\subset\R^{n+1}$, the relevant parabolic boundary associated with $L^*$, denoted by  $\pa^*_p D$, contains the points on the boundary of $D$ that can be connected to some interior point of $D$ by a closed curve having strictly decreasing $t$-coordinate. The operator $L^*$ can be seen as the \emph{backwards heat operator}.

Thanks to H\"ormander's theorem \cite{H}, the assumption (\ref{frc}) guarantees the hypoellipticity of $L, L^*$.  The fundamental solution of $L$ with singularity at $(\xi,0)$, that we will denote by $p(x,\xi,t)$, exists,  it  is smooth and symmetric for $t>0$, and satisfies $L p(\cdot,\xi,\cdot) = 0$ for $t>0$.  Corresponding Gaussian estimates for the
heat kernel were independently obtained by Jerison and Sanchez-Calle \cite{JSC} and by Kusuoka
and Stroock \cite{KS}, see Theorem \ref{GaussB} below. One should also see the references \cite{VSC} and \cite{BBLU2}.

In $D_T$ such as \eqref{OmegaSet} above we consider the initial-Dirichlet problem 
\begin{equation}
\left\{
\begin{array}{ll}
Lu=0 & \mbox{in $D_T$},\\
u= \varphi & \mbox{on $\partial_pD_T$}.
\end{array} \right.
\label{Dirichlet-prob2}
\end{equation}
Using Bony's maximum principle \cite{Bony} one can show that for any $\varphi\in C(\partial_p D_T)$ there exists a
unique Perron-Wiener-Brelot-Bauer solution $H^{D_T}_{\varphi}$ to (\ref{Dirichlet-prob2}). If we fix a point $(x,t)\in D_T$, the mapping
$\varphi\rightarrow u(x, t)$ is a continuous nonnegative linear functional on $C(\pa_p D_T)$ and hence
there exists a unique Borel measure, $\omega^{(x,t)} = \omega^{(x,t)}_{D_T, L}$, on $\partial_{p} D_T$ so that 
\[
H_{\varphi}^{D_T}(x,t)=\int_{\partial_{p} D_T}\varphi(y,s)d\omega^{(x,t)}(y,s),\quad\text{ for every }\, \vf \in C(\pa_p D_T).
\] 
The measure $\omega^{(x,t)}$ is called the $L$-parabolic measure (associated with $D_T$) evaluated at
$(x, t)$. Because of Harnack's inequality, see \cite{KS} and \cite{BBLU2}, for nonnegative solutions of the heat
equation (\ref{HeatOp}), we have $\omega^{(x,s)} << \omega^{(y,t)}$ provided $x, y\in \Om$ and $0<s<t<T$. 

In this work we examine the relation between  the $L$-parabolic 
measure, $d\omega^{(x,t)}$, and the measure $d\sigma_X\times dt$ on $S_T$, where  $d\sigma_X=P_X(\Omega,\cdot)$ is the so-called horizontal perimeter measure  on $\pa\Omega$. The latter constitutes the appropriate replacement for the standard surface measure on $\pa\Omega$
and plays a central role in sub-Riemannian geometric measure theory. We also derive the solvability of the Dirichlet problem (\ref{Dirichlet-prob2}) for boundary data $\varphi\in L^p\left(S_T,d\sigma_X\times dt\right)$. 

The results in this paper should be considered as the parabolic counterpart of those in the paper \cite{CGN4}. As pointed out in that paper, a remarkable new aspect is  the interplay between the sub-Riemannian geometry associated with $X$ and the nature of the boundary of the domain $\Omega$. In this connection, those points of $\pa\Omega$ at which the vector fields  become tangent to $\pa\Omega$ play a special role. At such points, which are known as characteristic points,  solutions of $Lu = 0$ reveal a behavior that is quite different  from classical caloric functions. For a detailed discussion of this aspect in the time-independent case the reader should see \cite{CGN4}.

In order to describe this new behavior  we recall that given a bounded $C^1$ domain $\Omega\subset\Rn$, a point $x_0\in\pa\Omega$ is called characteristic for the system $X=\left\{X_1,\ldots,X_m\right\}$ if, indicating with $N(x_0)$ a normal vector to $\pa\Omega$ at $x_0$, one has
\[
\left\langle N(x_0),X_1(x_0) \right\rangle=\ldots=\left\langle N(x_0),X_m(x_0) \right\rangle=0.
\] 
The \emph{characteristic set} of $\Omega$, hereafter denoted by $\Sigma=\Sigma_{\Omega,X}$, is the collection of all characteristic points of $\pa\Omega$. It is a compact subset of $\pa\Omega$. 

In this paper  we prove the boundedness of the parabolic Poisson kernel for $L$ at the characteristic set, under
the hypothesis that the domain $\Omega\subset\Rn$ in (\ref{OmegaSet}) satisfies a condition similar to the classical Poincar\'e tangent
outer sphere condition, see \cite{P}. To understand the role of such assumption one should go back to the work of D. Jerison on the Dirichlet problem for the Kohn Laplacian on the Heisenberg group $\mathbb H^n$. 
In \cite{J},  Jerison constructed an example of a smooth (in fact, real analytic) domain for which the Dirichlet problem admits a Green's function which, in the neighborhood of an isolated characteristic point, is at most H\"older continuous up to the boundary. His domain is a paraboloid $\Om_M = \{(z,t)\in \mathbb H^n \mid t>-M|z|^2\}$, for an approriately chosen $M>0$. As a consequence, in Jerison's example the Poisson kernel for the Kohn Laplacian (that we introduce in the parabolic setting in Definition \ref{D:pk} below) fails to be bounded near the isolated characteristic point $(0,0)$. Besides showing, in particular, that Euclidean smoothness of the domain is not relevant in the subelliptic Dirichlet problem, a distinctive feature of Jerison's example is that it fails to satisfy a tangent outer ball condition at the characteristic point $(0,0)$. In fact,  being invariant with respect to the non-isotropic group dilations $(z,t)\to (\lambda z,\lambda^2 t)$, the domain $\Om_M$ should be interpreted as the analogue of a non-convex cone in Euclidean analysis.

The metric balls are not smooth (see \cite{CG1}), and therefore it would not be possible to have a notion of tangency
based on these sets. The right notion of tangency is,  as first shown in  \cite{CGN1}, \cite{LU1} and  \cite{CGN4},  based on smooth level sets of the time independent sub-Laplacian $-\sum^m_{j=1}X^*_jX_j$. These level sets are called $X-balls$. In \cite{CGN1}, \cite{LU1} and \cite{CGN4}, it was shown that, when the domain satisfies the tangent outer $X$-ball condition, then it is possible to control near the characteristic set the intrinsic horizontal gradient of the Green's function associated to a sub-Laplacian. 

In this paper we establish a similar result for the parabolic equation \eqref{HeatOp} above.
Beside the outer $X-$ball condition,  we  also assume that the domain $\Omega\subset\Rn$ in (\ref{OmegaSet}) is non-tangentially accessible with respect to the Carnot-Carath\'eodory metric associated with the system $X$ (a $NTA$ domain, henceforth), see Definition  
\ref{NTA-def} below. We also assume that $\Om$ be $C^{\infty}$. The $NTA$ property allows us to use some of the basic results developed
in \cite{CG1}, \cite{FrG} and \cite{M1}, whereas the smoothness assumption permits us to use, away from the
characteristic set, standard tools from calculus.

The $NTA$ domains, first introduced in the study of the Dirichlet problem for the standard Laplacian by Jerison and Kenig in \cite{JK},  appear to be suitable for  the study of boundary value problems in sub-Riemannian geometry. The notion of $NTA$ domain presents the advantage of being purely metrical, and therefore, in the context of the present paper, it is intrinsic to the underlying sub-Riemannian structure. The drawback is that producing rich classes of examples in this setting becomes much harder since the geometry is considerably more complicated than the Euclidean one. Fortunately, however, this has been done in the papers \cite{CG1}, \cite{MM1}, see also \cite{DGN2}.

Before summarizing the assumptions that we impose on the cylinder $D_T$, we prompt the interested reader to become acquainted with the notion of relative $X$-perimeter $P_X\left(E,A\right)$ of a Borel set $E\subset \Rn$ with respect to a given open set $A\subset \Rn$ introduced in \cite{CDG2}. This constitutes a generalization of De Giorgi's notion of intrinsic perimeter. As such, it represents the appropriate
``surface measure'' in sub-Riemannian geometric measure theory. With such notion of $X$-perimeter, we define the $X$-surface measure $\sigma_X$ on the boundary of the cross-section $\Om\subset \Rn$ of the cylinder $D_T = \Om \times (0,T)$. Let $x\in \pa \Om$ and $r>0$, then the $\sigma_X$ measure of the metric surface ball $\pa \Om \cap B_d(x,r)$ is defined in the following way:   
\[
\sigma_{X}\left(\pa\Omega\cap B_d(x,r)\right)\overset{def}{=} P_X\left(\Omega,B_d\left(x,r\right)\right).
\]

\begin{definition}
\label{ADP}
Given a system $X = \left\{X_1, ...,X_m\right\}$ of smooth vector fields satisfying (\ref{frc}), we
say that a connected bounded open set $\Omega\subset\Rn$ is \emph{admissible} with
respect to the system $X$, or simply $ADP_X$, if:
\begin{enumerate}
	\item[1)]  $\Omega$ is of class $C^{\infty}$;
	\item[2)]  $\Omega$ is non-tangentially accessible (NTA) with respect to the Carnot-Carath\'eodory metric
associated to the system $X$ (see Definition \ref{NTA-def} below);
  \item[3)]  $\Omega$ satisfies a uniform tangent outer X-ball condition (see Definition \ref{Def-X-ball} below);
  \item[4)]  The measure $\sigma_X$ is upper $1$-Ahlfors regular. This means that there exist
$A,R_0 > 0$ depending on $X$ and $\Omega$ such that for every $x\in\pa\Omega$ and $0<r<R_0$ one has
\[
\sigma_{X}\left(\pa\Omega\cap B_d(x,r)\right)\leq A \frac{\left|B_d(x, r)\right|}{r}.
\]
\end{enumerate}
When the condition $4)$ is replaced by the following \emph{balanced degeneracy assumption} on the standard surface measure $\sigma$ on $\pa \Om$:
\begin{itemize}
\item[$4'$)] There exist $B,R_0>0$ depending on $X$ and $\Omega$ such that for every $x\in\pa\Omega$ and $0<r<R_0$ one has 
\[
\left(\max_{y\in \pa\Omega\cap B_d(x,r)} W(y)\right)\ \sigma\left(\pa\Omega\cap B_d(x,r)\right)\leq B \frac{\left|B_d(x, r)\right|}{r},
\]
\end{itemize}
then we say that $\Om$ is a $\sigma-ADP_X$ domain. 
\end{definition}
For the definition of the function $W$ in $4')$, see \eqref{af} below. We emphasize that arbitrary bounded $C^\infty$ domains need not to satisfy either conditions $4)$ or $4')$. For this aspect see \cite{CGN1}, \cite{CGN3}, \cite{DGN2}, \cite{CG2}, \cite{CGN4}. Furthermore, in these works it is shown that the standard surface measure $\sigma$ on $\pa \Om$ can behave quite badly at characteristic points. The one-sided Ahlfors property, namely assumption $4)$, will be needed to establish the mutual absolute continuity of  the $L$-parabolic measure and the horizontal perimeter measure, see Theorem \ref{HolderK} below.  Condition $4')$ instead, will be important in the proof of Theorem \ref{HolderP} below. 

The constants appearing in $4)$ (or in $4')$), and in Definitions \ref{Def-X-ball} and \ref{NTA-def} below, will be referred to as the
$ADP_X$-parameters (or, the $\sigma-ADP_X$ parameters) of $\Omega$. We introduce next a central character in this play, the parabolic 
Poisson kernel of $D_T=\Omega\times(0,T)$ and $L$. In fact, we define two such functions, each one playing a different role. Assume that $\Om\subset \Rn$ be of class $C^\infty$ and let
$G(x,t;y,s) = G_{D_T}^L(x,t;y,s)$ indicate the Green function for the heat operator $L$ in (\ref{HeatOp}) and for a cylinder $D_T = \Om \times (0,T)$. 
By H\"ormander's theorem \cite{H} and the results in \cite{KN1} and \cite{De1}, see Theorem \ref{Derr}
below, for any fixed $(x,t)\in D$ the function $(y,s)\rightarrow G(x,t;y,s)$ is $C^{\infty}$ up to the boundary in a suitably
small neighborhood of any non-characteristic point. Let $\nu(y)$ indicate the outer unit
normal at $y\in\pa\Omega$. At every point $y\in\pa\Omega$ we denote by $N^X(y)$ the vector defined by
\[
N^{X}\left(y\right)=\left(\left\langle \nu\left(y\right),X_{1}\left(y\right)\right\rangle,\dots,\left\langle \nu\left(y\right),X_{m}\left(y\right)\right\rangle\right).
\]
Denoting by $\Sigma$ the characteristic set of $\Omega$, we remark that the vector $N^X(y)=0$ if and
only if $y\in\Sigma$. 
 We also consider the so-called angle function
\begin{equation}\label{af}
W\left(y\right)=\left|N^{X}\left(y\right)\right|=\sqrt{\sum^{m}_{j=1}\left\langle \nu\left(y\right),X_{j}\left(y\right)\right\rangle^{2}}.
\end{equation}
\noindent We note explicitly that it was proved in \cite{CDG2} that on $\pa\Omega$
\[
d\sigma_X=Wd\sigma.
\]
This formula implies, in particular, that it is always true that $d\sigma_X << d\sigma$, where $d\sigma$ denotes the standard surface measure on $\pa \Om$, but the opposite is  not true in general since $d\sigma$ can be quite singular on the characteristic set.
\begin{definition}\label{D:pk}
Given a $C^{\infty}$ bounded open set $\Om\subset\mathbb{R}^n$, and $D_T$ as in \eqref{OmegaSet} above, for every $((x,t),(y,s))\in D_T \times((\pa\Omega\setminus\Sigma)\times \left(0,T\right) )$, we define the \emph{parabolic Poisson kernel for $D_T$ and $L$} as follows 
\begin{equation*}
P(x,t;y,s)=
\left\{
\begin{array}{ll}
<XG\left(x,t;y,s\right),N^{X}\left(y\right)>, & 0<s<t,\\
0, & s\geq t 
\end{array} \right.
\end{equation*}
We also define 
\begin{equation*}
K(x,t;y,s)=\frac{P(x,t;y,s)}{W(y)}.
\end{equation*}
\end{definition}

We emphasize here that the reason for which in the definition of
$P(x,t;y,s)$ and $K(x,t;y,s)$ we restrict to points $(y,s)\in (\pa\Omega\setminus\Sigma)\times \left(0,T\right) $  is
that, as we have explained above,  the horizontal gradient $XG(x,t;y,s)$ may not be
defined at points of $\Sigma\times \left(0,T\right)$. Since the angle function
$W$ vanishes on $\Sigma \times \left(0,T\right)$, it should be clear that the
function $K(x,t;y,s)$ is more singular then $P(x,t;y,s)$ at the
characteristic points. However, such additional singularity is
balanced by the fact that the density $W$ of the measure $\sigma_X$
with respect to surface measure on $\pa \Om$ vanishes at the characteristic
points. As a consequence, $K(x,t;y,s)$ is the appropriate 
Poisson kernel with respect to the intrinsic measure $d\sigma_X\times dt$,
whereas $P(x,t;y,s)$ is more naturally attached to the ``wrong measure"
$d\sigma\times dt$.

Hereafter, for $(x,t)\in S_T$ it will be convenient to indicate with 
\[
C_r(x,t)=B_d(x,r)\times (t-r^2,t+r^2)
\]
the parabolic cylinder   centered at $(x,t)$ with radius $r>0$. We also denote by 
\[
\Delta_{r}(x,t)=C_{r}(x,t)\cap S_T
\]
the parabolic lateral surface cylinder centered at $(x,t)$ with radius $r>0$. The first main result in this paper is
contained in the following theorem. 
\begin{thm}
\label{HolderK}
Let $D_T=\Omega\times(0,T)$, where $\Omega$ is an $ADP_X$ domain and $T>0$. For every $p > 1$ and any fixed $x\in \Om$ there
exist positive constants $C,R_1,$ depending on $p,M,R_0, (x,T),$ and on the $ADP_X$ parameters, such
that for $(x_0,t_0)\in S_T$, with $0<r<R_1$ and $\Delta_r(x_0,t_0)\subset S_T$, one has 
\begin{eqnarray*}
\left(\frac{1}{\left|\Delta_r(x_0,t_0)\right|_{d\sigma_X\times dt}}\int_{\Delta_r(x_0,t_0)}K(x,T; y, t)^p d\sigma_X(y) dt\right)^{\frac{1}{p}} \leq \\ \frac{C}{\left|\Delta_r(x_0,t_0)\right|_{d\sigma_X\times dt}}\int_{\Delta_r(x_0,t_0)}K(x,T;y,t) d\sigma_X(y)dt,
\end{eqnarray*}
where $\left|\Delta_r(x_0,t_0)\right|_{d\sigma_X\times dt}=\int_{\Delta_r(x_0,t_0)}d\sigma_X(y)dt$. Moreover, the $L$-parabolic measure $\omega^{(x,T)}$ and the measure $d\sigma_X \times dt$  are
mutually absolutely continuous on $S_T$.
\end{thm}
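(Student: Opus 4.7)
The plan is to follow the scheme of Fabes--Salsa \cite{FSa}, adapted to this sub-Riemannian parabolic setting along the lines of the elliptic result in \cite{CGN4}. First I would identify $K(x,T;\cdot,\cdot)$ as the Radon--Nikodym density of $d\omega^{(x,T)}$ with respect to $d\sigma_X\times dt$ on $(\pa\Om\setminus\Sigma)\times(0,T)$. Away from the characteristic set $\Sigma$ the Green function $G(x,T;\cdot,\cdot)$ is smooth up to the boundary by the Kohn--Nirenberg/Derridj regularity (Theorem \ref{Derr} cited in the excerpt), so a standard Green's identity applied to $G$ and a smooth test function, combined with the identity $d\sigma_X = W\,d\sigma$, yields
\[
d\omega^{(x,T)}(y,t)= P(x,T;y,t)\,d\sigma(y)\,dt = K(x,T;y,t)\,d\sigma_X(y)\,dt
\]
on the non-characteristic part of $S_T$.

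The heart of the proof is the pointwise estimate
\[
K(x,T;y,t)\ \leq\ C\,\frac{\omega^{(x,T)}(\Delta_r(y,t))}{|\Delta_r(y,t)|_{d\sigma_X\times dt}}
\]
for $(y,t)\in S_T\setminus(\Sigma\times(0,T))$ at scales $r$ small compared to the distance to $\Sigma$. To obtain it I would proceed in three stages. First, using the tangent outer $X$-ball condition (Definition \ref{Def-X-ball}), construct an outer barrier based on a power of the fundamental solution of the sub-Laplacian translated to the outer $X$-ball, which gives the Hopf-type pointwise control $|XG(x,T;y,t)| \leq C\, G(x,T;A_r)/r$ at a non-characteristic boundary point, where $A_r$ denotes the NTA ``corkscrew'' point at $(y,t)$ at parabolic scale $r$. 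This barrier construction is the key subelliptic ingredient and is the parabolic counterpart of the elliptic estimates in \cite{CGN1}, \cite{LU1}, \cite{CGN4}. Second, multiplying by the angle function and using $d\sigma_X = W\,d\sigma$ together with assumption $4)$ (one-sided $1$-Ahlfors regularity of $\sigma_X$) yields $K(x,T;y,t) \leq C\,G(x,T;A_r)\,r/|B_d(y,r)|$. Third, the standard NTA parabolic comparison principle (as in \cite{FSa}, valid here thanks to Harnack chains, the Carleson estimate, and the backward Harnack inequality, all of which are available in the sub-Riemannian parabolic setting from \cite{CG1}, \cite{FrG}, \cite{M1}, \cite{KS}, \cite{BBLU2}) gives $G(x,T;A_r)\approx r^{-2}\omega^{(x,T)}(\Delta_r(y,t))/\omega^{A_r}(\Delta_r(y,t))$, combined with $\omega^{A_r}(\Delta_r(y,t))\approx 1$ from the NTA structure; finally the parabolic-volume identity $|B_d(y,r)|\approx r^2|\Delta_r(y,t)|_{d\sigma_X\times dt}/\sigma_X(\pa\Om\cap B_d(y,r))$ (after using assumption $4)$ again) produces the claimed bound.

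Having the pointwise bound, the reverse H\"older inequality follows by a standard argument: integrating the $p$-th power of $K$ against $d\sigma_X\times dt$ on $\Delta_r(x_0,t_0)$ and using the pointwise estimate together with the doubling property of $\omega^{(x,T)}$ (consequence of the NTA property and parabolic Harnack chains) reduces the left-hand side to the first power of the average of $K$, which, in turn, equals $\omega^{(x,T)}(\Delta_r(x_0,t_0))/|\Delta_r(x_0,t_0)|_{d\sigma_X\times dt}$, i.e.\ the average of $K$ itself. One has to split $\Delta_r(x_0,t_0)$ into its non-characteristic part and its intersection with $\Sigma\times(0,T)$; on the latter the kernel $K$ is defined to be zero (by Definition \ref{D:pk}) while $\omega^{(x,T)}$ puts no mass there, a fact that follows from the outer $X$-ball barrier argument producing $\omega^{(x,T)}(\Sigma\times(0,T))=0$.

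The mutual absolute continuity then drops out: the reverse H\"older inequality implies $d\omega^{(x,T)}\in A_\infty(d\sigma_X\times dt)$ on $S_T$, giving both implications. Alternatively, one direction is immediate from the identity $d\omega^{(x,T)}=K\,d\sigma_X\times dt$ on the non-characteristic part, together with $\omega^{(x,T)}(\Sigma\times(0,T))=0$, while the other direction uses the strict positivity of $K$ at any non-characteristic point (a consequence of the Hopf lemma for $L^*$, valid under the NTA and $X$-ball assumptions). The main obstacle I anticipate is the barrier construction giving the Hopf-type gradient estimate uniformly up to $\Sigma$: the pointwise bound on $|XG|$ must degenerate exactly at the right rate so that, after dividing by $W(y)$, the kernel $K$ is still controlled by a manageable average. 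This is precisely where assumptions $3)$ (uniform tangent outer $X$-ball) and $4)$ (one-sided Ahlfors regularity of $\sigma_X$) enter in a balanced way, and is the subelliptic-parabolic substitute for the classical Fabes--Salsa Lipschitz-domain estimate.
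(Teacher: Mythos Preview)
Your overall scheme is correct and aligns with the paper's: identify $K$ as the Radon--Nikodym derivative $d\omega^{(x,T)}/(d\sigma_X\times dt)$, establish a pointwise bound of the form $K(x,T;y,s)\leq C\,\omega^{(x,T)}(\Delta_r)/|\Delta_r|_{d\sigma_X\times dt}$, deduce the reverse H\"older inequality, and conclude mutual absolute continuity via the $A_\infty$ machinery (the paper cites Lemma~5 in \cite{CF}).

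The one organizational difference is in how the pointwise bound is obtained. You go $K\leq |XG(x,T;y,s)|\leq C\,G(x,T;\overline{A}_r)/r$ by a Hopf--type barrier and then invoke the Green--function/caloric--measure equivalence $G(x,T;\overline{A}_r)\approx \omega^{(x,T)}(\Delta_r)/|B_d(y,r)|$ from the NTA theory. The paper instead first applies the boundary comparison theorem (Theorem~\ref{Globalcom}) at the level of caloric measures to obtain $K(x,T;y,s)\leq C\,K(\overline{A}_r;y,s)\,\omega^{(x,T)}(\Delta_r)$ (Theorem~\ref{Globalcom-K}), and then bounds $K(\overline{A}_r;y,s)\leq |XG(\overline{A}_r;y,s)|\leq C/(r\,|B_d(x_0,r)|)$ directly from the uniform gradient estimate Theorem~\ref{HorBound-G}, using the corkscrew geometry \eqref{Cork-K}--\eqref{Cork-K1}. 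Both routes combine the same three ingredients (outer $X$-ball barrier, NTA comparison, upper $1$-Ahlfors bound); yours is closer in spirit to the elliptic argument in \cite{CGN4}, while the paper's avoids stating the $G\approx\omega/|B_d|$ equivalence as a separate lemma. Two small corrections: in your Stage~3 the scaling factor should be $|B_d(y,r)|^{-1}$, not $r^{-2}$; and the splitting of $\Delta_r$ into characteristic and non-characteristic parts is unnecessary, since $\sigma_X(\Sigma)=0$ and the gradient bound in Theorem~\ref{HorBound-G} is uniform up to $\Sigma$.
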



A significant class of Carnot groups of step two in which one can construct examples of $ADP_X$ domains is that of groups of Heisenberg type. Such groups constitute a generalization of the Heisenberg group and they carry a natural complex structure. Let $\mathfrak{g}$ be a graded nilpotent Lie algebra of step two. This means that $\mathfrak{g}$  admits a splitting $\mathfrak{g} = V_1\oplus V_2$, where $[V_1,V_1] = V_2$, and $[V_1, V_2] = {0}$.  We endow $\mathfrak{g}$ with an inner product $\langle \cdot,\cdot\rangle$ with respect to which the decomposition $V_1\oplus V_2$ is orthogonal. Let $\mathbb{G}$ be the connected and simply connected graded nilpotent Lie group associated with $\mathfrak{g}$. Consider the map $J : V_2\rightarrow \text{End}(V_1)$ defined for every $\eta \in V_2$ by
\[
\langle J(\eta)\xi,\xi' \rangle=\langle[\xi,\xi'],\eta\rangle, \quad \xi,\xi' \in V_1,\eta \in V2.
\]
Then, $\mathbb{G}$ is said of H-type (Heisenberg type) if $J(\eta)$ is an orthogonal map on $V_1$ for every $\eta \in V_2$ such that $\|\eta\| = 1$.
When $\mathbb{G}$ is of H-type we thus have for $\xi,\xi' \in V_1,\eta\in V_2,$
\[
\langle J(\eta)\xi, J(\eta)\xi' \rangle= \|\eta\|^2 \langle \xi, \xi' \rangle .
\]
The $J$ map induces a complex structure since in every group of H-type one has for every $\eta,\eta' \in V_2$,
\[
J(\eta)J(\eta')+J(\eta')J(\eta)=-2\langle\eta,\eta'\rangle I,
\]
see \cite{K}. In particular,
\[
J(\eta)^2 =-\|\eta\|^2 I.
\]

We mention explicitly that, thanks to the results in \cite{K}, in every group of Heisenberg type with an orthogonal system $X$ of generators of $\mathfrak{g}= V_1\oplus V_2$, the fundamental solution of the sub-Laplacian associated with $X$ is given by
\[
\Gamma(x,y) = \frac{C(\mathbb{G})}{N(x,y)^{Q-2}}, 
\]
where $Q=\text{dim }V_1+2\text{ dim } V_2$ is the homogeneous dimension of $\mathbb{G}$,  and
\[
N(x,y)=\left(|x|^4+16|y|^2\right)^{1/4}.
\]
The following result provides a general class of domains satisfying the uniform $X$-ball condition, see  \cite{CGN1},  \cite{CGN2}, \cite{CGN3}, \cite{CGN4}, and also \cite{LU1}. We recall the following definition from \cite{CGN3}. Given a Carnot group $\mathbb{G}$, with Lie algebra $\mathfrak{g}$, a set $A \subset \mathbb{G}$ is called convex, if $\exp^{-1}(A)$ is a convex subset of $\mathfrak{g}$.

\begin{thm} 
\label{ADP-dom}
Let $\mathbb{G}$ be a Carnot group of Heisenberg type and denote by $X = \{X_1,\ldots,X_m\}$ a set of generators of its Lie algebra. Every $C^{\infty}$ convex bounded domain $\Omega\subset \mathbb{G}$ is a $ADP_X$ and also a $\sigma-ADP_X$ domain. In particular, the gauge balls in $\mathbb{G}$ are $ADP_X$ and also $\sigma-ADP_X$ domains.
\end{thm}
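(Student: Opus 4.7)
The proof is a synthesis: each of the four defining conditions of $ADP_X$, and the modified condition $4')$ for $\sigma$-$ADP_X$, has been established separately for $C^\infty$ convex bounded domains in H-type groups in the companion works cited in the statement. My plan is to verify them in order, and then to check that gauge balls fit into this class.

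First I would dispatch the $NTA$ condition. On a Carnot group the Carnot--Carath\'eodory distance is left-invariant, and on H-type groups its gauge-like behavior is controlled by the explicit function $N(x)=(|x|^4+16|y|^2)^{1/4}$. For $C^\infty$ convex bounded domains, the construction of non-tangential corkscrew points and Harnack chains with respect to this left-invariant metric is carried out in \cite{MM1}, building on the general framework of \cite{CG1}. I would simply cite those results to settle condition $2)$ of Definition \ref{ADP}.

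Second, I would verify the uniform tangent outer $X$-ball condition. The decisive point is that on an H-type group the sub-Laplacian associated with $X$ has the explicit fundamental solution $\Gamma(x,y)=C(\mathbb G)/N(x,y)^{Q-2}$, so the $X$-balls are precisely the left translates of gauge balls $\{x : N(x_0^{-1}\cdot x)<r\}$. Since the gauge balls are known to be convex in the exponential-coordinate sense used in this paper (a consequence of $J(\eta)^2=-\|\eta\|^2 I$, via a direct Hessian computation on $N^2$, cf. \cite{CGN3}), one can, at every $p\in\pa\Omega$, produce a tangent outer $X$-ball whose radius depends only on the $C^2$ modulus of $\pa\Omega$ and on the uniform convexity constants of the gauge. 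The quantitative version of this argument is given in \cite{CGN1}, \cite{CGN4}, and \cite{LU1}, and I would invoke it verbatim.

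Third, I would verify the measure-theoretic conditions $4)$ and $4')$. Using $d\sigma_X=W\,d\sigma$, the bound
\[
\sigma_X\bigl(\pa\Omega\cap B_d(x,r)\bigr)\leq A\,\frac{|B_d(x,r)|}{r}
\]
follows from the perimeter estimates for convex $C^\infty$ domains proved in \cite{CGN3}, which control the decay of $W$ near the characteristic set $\Sigma$ against the local $C^2$ geometry of $\pa\Omega$; the same technique, applied with the weight $W$ redistributed on the standard surface measure $\sigma$, yields condition $4')$. Finally, the gauge ball $\{N<R\}$ is $C^\infty$ (the origin lies in the interior), convex in the exponential sense by the same Hessian computation, and bounded; hence it falls under the three preceding steps. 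The main obstacle I expect is the uniform tangent outer $X$-ball step: it is conceptually clean because $X$-balls in H-type groups are explicit, but obtaining a \emph{uniform} radius forces one to combine convexity of the gauge with a quantitative second-order description of $\pa\Omega$ precisely at points of $\Sigma$, where $N^X$ collapses and the horizontal geometry degenerates.
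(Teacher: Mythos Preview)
Your proposal is correct and matches the paper's approach: the paper does not give a proof of this theorem at all, but simply states it as a known result assembled from the references \cite{CGN1}, \cite{CGN2}, \cite{CGN3}, \cite{CGN4} and \cite{LU1}. Your sketch is a faithful unpacking of how those references handle each of the four $ADP_X$ conditions (and the modified $4')$), so there is nothing to correct; if anything, you have supplied more detail than the paper itself.
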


By combining Theorem \ref{HolderK} and Theorem \ref{ADP-dom} with the results in \cite{M1} and \cite{FrG}, we can solve the Dirichlet problem with boundary data in $L^{p}$ with respect to the measure $d\sigma_X \times dt$ in cylinders $D_T = \Om \times (0,T)\subset \mathbb{G}\times (0,T)$, where $\mathbb{G}$ is a group of Heisenberg-type and $\Om$ is a $C^{\infty}$ convex bounded domain. To state the relevant results
we need to introduce a definition. For any $(y,s)\in S_T$ a
nontangential region at $(y,s)$ is defined by
\[
\Gamma_{\alpha}(y,s)=\left\{(x,t)\in D_T: d_{p}((x,t),(y,s))<(1+\alpha)d_{p}((x,t),S_T)\right\},
\]
where
\[
d_{p}((x,t),(y,s))=\sqrt{d(x,y)^2+\left|t-s\right|}.
\]
Given a function $u\in C(D_T)$, the $\alpha-$nontangential maximal function of $u$ at $y$ is defined by
\[
N_{\alpha}(u)(y,s)=\sup_{(x,t)\in\Gamma_{\alpha}(y,s)}\left|u(x,t)\right|.
\] 
In the following result $L$ indicates the heat equation associated with any sub-Laplacian on $\mathbb G$.
\begin{thm}
\label{NonTanK}
Let $\mathbb{G}$ be a Carnot group of H-type and $\Om\subset\mathbb{G}$ be a $C^{\infty}$ convex bounded domain. Set $D_T=\Om \times(0,T)$, with $0<T<\infty$. Then, for every $p>1$, if $f\in L^{p}\left(S_T,d\sigma_X\times dt\right)$, the generalized solution $H^{D_T}_f$ to the problem 
\[
\left\{
\begin{array}{ll}
Lu=0 & \mbox{in $D_T$},\\
u= f & \mbox{on $S_T$},\\
u=0 & \mbox{on $\Om\times\left\{0\right\}$} 
\end{array} \right.
\]
exists, and it is given by
\[
H^{D_T}_f(x,t)=\int_{S_T} K(x,t,y,s)f(y,s)d\sigma_X(y)ds.
\]
Moreover,  there exists a constant $C>0$, depending on $\mathbb G, \Om$ and $p$, such that 
\[
\left\|N_{\alpha}\left(H^{D_T}_f\right)\right\|_{L^p\left(S_T,d\sigma_Xdt\right)}\leq C\left\|f\right\|_{L^p\left(S_T,d\sigma_Xdt\right)}.
\]
Furthermore, $H^{D_T}_f$ converges nontangentially $d\sigma_X\times dt-$a.e. to $f$ on $S_T$.
\end{thm}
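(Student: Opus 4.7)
The plan is to combine Theorems \ref{HolderK} and \ref{ADP-dom} with the parabolic NTA theory developed in \cite{M1} and \cite{FrG} to pass from the reverse H\"older estimate on the Poisson kernel to the full $L^p$ theory of the initial-Dirichlet problem. By Theorem \ref{ADP-dom}, every $C^\infty$ convex bounded domain $\Omega \subset \mathbb{G}$ in a Heisenberg-type group is $ADP_X$, so all hypotheses of Theorem \ref{HolderK} are verified.

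The first step is to establish the representation formula
\[
H^{D_T}_f(x,t) = \int_{S_T} K(x,t;y,s) f(y,s) d\sigma_X(y) ds
\]
for continuous $f$ vanishing on $\Om \times \{0\}$. Using Green's representation on smooth subdomains exhausting $D_T$ through non-characteristic points, together with the boundary regularity \cite{KN1,De1} and a passage to the limit, the Radon-Nikodym density $d\omega^{(x,t)}/(d\sigma_X dt)$ is identified with $K(x,t;y,s)$ on $S_T \setminus (\Sigma \times (0,T))$; since $W \equiv 0$ on $\Sigma$ the measure $d\sigma_X \times dt$ puts zero mass on $\Sigma \times (0,T)$, so the formula extends trivially to all of $S_T$.

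The heart of the argument is the pointwise domination
\[
N_\alpha(H^{D_T}_f)(y_0,s_0) \le C \, M(f)(y_0,s_0),
\]
where $M$ is the parabolic Hardy-Littlewood maximal operator on $S_T$ with respect to the surface cylinders $\Delta_r$ and the measure $d\sigma_X \times dt$. Given $(x,t) \in \Gamma_\alpha(y_0,s_0)$ at distance $\approx r$ from the lateral boundary, I would split $S_T$ dyadically into $\Delta_r(y_0,s_0)$ and the annular rings $\Delta_{2^{k+1}r} \setminus \Delta_{2^k r}$. Using the boundary Harnack principle and Harnack chain estimates of \cite{FrG, M1} one controls $K(x,t;\cdot,\cdot)$ on each ring by its average over the associated surface cylinder; the reverse H\"older inequality of Theorem \ref{HolderK} applied with exponent $p'$ converts the $L^{p'}$-average of $K$ into its $L^1$-average, which by Harnack is comparable to $\omega^{(x,T)}(\Delta_{2^kr})/|\Delta_{2^kr}|_{d\sigma_X dt}$. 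H\"older's inequality in the $f$-variable and a geometric-series summation then yield the majorization by $M(f)(y_0,s_0)$. Since the $1$-Ahlfors regularity of $\sigma_X$ together with the standard doubling of $|B_d(x,r)|$ makes $d\sigma_X \times dt$ doubling on $S_T$, the classical maximal function bound gives
\[
\|N_\alpha(H^{D_T}_f)\|_{L^p(S_T, d\sigma_X \times dt)} \le C\|f\|_{L^p(S_T, d\sigma_X \times dt)}.
\]

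For general $f \in L^p$, I would then define $H^{D_T}_f$ by the Poisson integral and approximate $f$ in $L^p$ by continuous $f_n$ vanishing at $t = 0$. Nontangential convergence of $H^{D_T}_{f_n}$ to $f_n$ follows from boundary regularity of PWB solutions in parabolic NTA domains, while the $L^p$ bound on $N_\alpha$ permits passage to the limit by the standard Fatou-type argument, giving a.e. nontangential convergence $H^{D_T}_f \to f$ on $S_T$. The \emph{main technical obstacle} is the pointwise domination step above: one must verify that the parabolic NTA machinery of \cite{M1,FrG}, combined with the uniform tangent outer $X$-ball condition, produces estimates on $K(x,t;y,s)$ that remain uniform up to and including characteristic points of the lateral boundary---it is exactly this uniformity (absent in Jerison's example \cite{J}) that allows the dyadic-ring decomposition to be summed.
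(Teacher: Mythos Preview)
Your outline captures the right ingredients, but the central step---the pointwise domination $N_\alpha(H^{D_T}_f)\le C\,M(f)$ with $M$ the Hardy--Littlewood maximal operator relative to $d\sigma_X\times dt$---is both stronger than what is needed and not justified by your sketch. The dyadic-ring argument you describe applies the reverse H\"older inequality of Theorem~\ref{HolderK} to $K(x,t;\cdot,\cdot)$ with $(x,t)\in\Gamma_\alpha(y_0,s_0)$ at distance $\approx r$ from the boundary; but Theorem~\ref{HolderK} is stated for a \emph{fixed} interior base point, with constants depending on that point, and gives no uniform control as the base point approaches $S_T$. Even granting a suitable version, H\"older's inequality in the $f$-variable with exponent $p$ would produce at best a bound by $M_{\sigma_X}(|f|^p)^{1/p}$, and $M_{\sigma_X}$ is not bounded on $L^1$, so the resulting $L^p$ estimate fails; you would have to run H\"older with some $\beta<p$, which again requires the reverse H\"older at exponent $\beta'>p'$ for the moving base point.

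The paper avoids this entirely by separating the two pieces of machinery. First, the parabolic $NTA$ theory alone (Theorem~\ref{Non-Max}, imported from \cite{M1},\cite{FrG}) already gives the pointwise bound $N_\alpha(H^{D_T}_f)\le C\,M_{\omega^{(x_1,t_1)}}(f)$, where the maximal function is taken with respect to the \emph{caloric measure} at a single fixed interior point $(x_1,t_1)$; no reverse H\"older is needed here. Second, the reverse H\"older inequality at that same fixed base point yields, after one application of H\"older with exponent $1<\beta<p$, the comparison $M_{\omega^{(x_1,t_1)}}(f)\le C\,M_{\sigma_X}(|f|^\beta)^{1/\beta}$. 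Since $p/\beta>1$, the $L^{p/\beta}$ boundedness of $M_{\sigma_X}$ on the doubling space $(S_T,d_p,d\sigma_X\,dt)$ finishes the $L^p$ estimate. Resolutivity of $f\in L^p$ is handled by showing $L^p(d\sigma_X\,dt)\subset L^1(d\omega^{(x_1,t_1)})$ via Proposition~\ref{Positive-P-K} and then invoking Theorem~\ref{Brelot}; nontangential convergence $d\sigma_X\times dt$-a.e.\ follows from the $d\omega$-a.e.\ convergence in Theorem~\ref{Non-Max} together with the mutual absolute continuity in Theorem~\ref{HolderK}. The moral: route the nontangential maximal estimate through $M_\omega$, not directly through $M_{\sigma_X}$.
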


Theorems \ref{HolderK} and \ref{NonTanK} provide solvability of the Dirichlet problem with respect to the ``natural"  surface measure $d\sigma_X\times dt$ on $S_T$. On the other hand, even if, as we have mentioned, the ordinary surface measure $d\sigma\times dt$ is the ``wrong one'' in the Dirichlet problem, it would still be highly desirable to know if there exist situations in which
(\ref{Dirichlet-prob2}) can be solved for boundary data in some $L^p$ with respect to $d\sigma\times dt$. This is where the notion of $\sigma-ADP_X$ domain becomes relevant.


\begin{thm}
\label{HolderP}
Let $D_{T_1}=\Omega\times(0,T_1)$, where $\Omega$ is a $\sigma-ADP_X$ domain and $T_1>0$. For every $p > 1$ and any fixed $(x,T)\in D_{T_1}$ there
exist positive constants $C,R_1,$ depending on $p,M,R_0, (x,T),$ and on the $\sigma-ADP_X$ parameters, such
that for that for $(x_0,t_0)\in S_T$, with $0<r<R_1$ and $\Delta_r(x_0,t_0)\subset S_T$, one has 
\begin{eqnarray*}
\left(\frac{1}{\left|\Delta_r(x_0,t_0)\right|_{d\sigma\times dt}}\int_{\Delta_r(x_0,t_0)}P(x,T; y, t)^p d\sigma_X(y) dt\right)^{\frac{1}{p}} \leq \\ \frac{C}{\left|\Delta_r(x_0,t_0)\right|_{d\sigma\times dt}}\int_{\Delta_r(x_0,t_0)}P(x,T;y,t) d\sigma_X(y)dt,
\end{eqnarray*}
where $\left|\Delta_r(x_0,t_0)\right|_{d\sigma\times dt}=\int_{\Delta_r(x_0,t_0)}d\sigma dt$. Moreover, the $L$-parabolic measure $\omega^{(x,T)}$ and the measure $d\sigma \times dt$ on $S_T$ are
mutually absolutely continuous.
\end{thm}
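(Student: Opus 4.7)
The plan is to reproduce the scheme used to prove Theorem \ref{HolderK}, with the balanced degeneracy hypothesis $(4')$ taking the place of the one-sided Ahlfors regularity $(4)$, and with $d\sigma\times dt$ replacing $d\sigma_X\times dt$ in the role of the ``surface measure'' on $S_T$. The essential observation is that at the single step in the proof of Theorem \ref{HolderK} where $(4)$ is used to control $\sigma_X(\pa\Omega\cap B_d(x_0,r))$ by $|B_d(x_0,r)|/r$, one can work instead with $M\cdot\sigma(\pa\Omega\cap B_d(x_0,r))$, where $M := \max_{\pa\Omega\cap B_d(x_0,r)} W$; hypothesis $(4')$ is precisely the statement that this product is bounded by $|B_d(x_0,r)|/r$. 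The identity $P = KW$, combined with the outer $X$-ball condition, supplies the extra factor $M$ that compensates for the passage from $d\sigma_X$ to $d\sigma$.

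I would carry out the argument in three steps. Step~1: using the outer $X$-ball condition to produce a sub-Laplacian barrier up to and including the characteristic set, and combining it with the Jerison-Sanchez-Calle/Kusuoka-Stroock Gaussian bounds on the fundamental solution and its horizontal derivatives, derive a sharp pointwise upper bound on $P(x,T;y,t)$ for $(y,t)\in\Delta_r(x_0,t_0)$. Because the horizontal gradient of the $X$-ball barrier is proportional to the angle function $W$, the supremum of this bound over $\Delta_r(x_0,t_0)$ carries the factor $M$. Step~2: combine this pointwise bound with the Caffarelli-Fabes-Mortola-Salsa type comparison theorem for non-negative $L$-caloric functions (available in the present NTA setting from \cite{M1}, \cite{FrG}) and the parabolic Harnack inequality, to obtain the doubling property of $\omega^{(x,T)}$ on $S_T$ together with a scale-invariant size estimate schematically of the form
\[
\omega^{(x,T)}(\Delta_r(x_0,t_0)) \;\le\; C\, \Phi(r)\, M\, \sigma(\pa\Omega\cap B_d(x_0,r)),
\]
where $\Phi(r)$ carries the parabolic scaling. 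Step~3: invoke $(4')$ to convert the product $M\,\sigma(\pa\Omega\cap B_d(x_0,r))$ into a quantity controlled by $|B_d(x_0,r)|/r$, so that the resulting estimate becomes identical in form to the one used in the proof of Theorem \ref{HolderK}. From there, the Coifman-Fefferman self-improvement of the $A_\infty$ condition delivers both the stated reverse-H\"older inequality and the mutual absolute continuity of $\omega^{(x,T)}$ with $d\sigma\times dt$.

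The main obstacle is Step~1: the sharp pointwise upper bound on $P$, uniformly up to the characteristic set $\Sigma$. As Jerison's paraboloid example in \cite{J} shows, in the absence of a tangency hypothesis $P$ may fail to be bounded near isolated characteristic points. The outer $X$-ball condition furnishes the required tangent barrier; its horizontal gradient is proportional to $W$, which is exactly why the resulting sup-estimate on $\Delta_r$ picks up the factor $M$ and makes $(4')$ the geometrically correct surface-measure hypothesis for the theorem. Once Step~1 is established, Steps~2 and~3 proceed along the same lines as the corresponding parts of the proof of Theorem \ref{HolderK}, the sole change being the replacement of $(4)$ by $(4')$ wherever surface-measure Ahlfors regularity enters.
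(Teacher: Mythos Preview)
Your high-level plan is correct and matches the paper's own proof, which consists of a single sentence directing the reader to rerun the argument for Theorem~\ref{HolderK}, starting from the second identity $d\omega^{(x,T)} = P(x,T;\cdot)\,d\sigma\,dt$ in (\ref{Omega-sigma}) and replacing condition~$(4)$ by~$(4')$ at the single step where the surface Ahlfors bound is invoked.

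Two small corrections to your elaboration. First, the factor $M = \max_{\pa\Omega\cap B_d(x_0,r)} W$ does not arise from the horizontal gradient of the $X$-ball barrier; it comes directly from the elementary inequality $P(x,t;y,s) \le W(y)\,|XG(x,t;y,s)|$ in (\ref{Bound-P-K}). The barrier furnishes the bound on $|XG|$ (Theorems~\ref{Bound-Est-G} and~\ref{HorBound-G}), and the explicit factor $W(y)$ in front is precisely what converts the use of $(4)$ into a use of $(4')$. Second, your Step~2 ``size estimate'' $\omega^{(x,T)}(\Delta_r) \le C\,\Phi(r)\,M\,\sigma$ is not the intermediate step actually needed, and by itself would not yield $A_\infty$. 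The chain runs exactly as in Theorem~\ref{HolderK}: write $\int P^p\,d\sigma\,dt = \int P^{p-1}\,d\omega$; apply the $P$-analogue of Theorem~\ref{Globalcom-K} (proved identically, using the doubling of $\sigma$ from Theorem~\ref{Doubling-sigma} in place of (\ref{Doubling-sigmaX})) to replace $P(x,T;\cdot)^{p-1}$ by $C\,\omega^{(x,T)}(\Delta_r)^{p-1} P(\overline A_r;\cdot)^{p-1}$; bound $P(\overline A_r;y,s)$ pointwise by $C M/(r\,|B_d(x_0,r)|)$ via (\ref{Bound-P-K}) and Theorem~\ref{HorBound-G}; and invoke $(4')$ in the form $M\,r^2\sigma(\pa\Omega\cap B_d(x_0,r)) \le C\,r\,|B_d(x_0,r)|$ to close the estimate. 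No separate Coifman--Fefferman self-improvement is needed for the reverse H\"older inequality itself; that lemma enters only at the end, to deduce mutual absolute continuity from the reverse H\"older inequality, exactly as in the proof of Theorem~\ref{HolderK}.
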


Combining Theorems \ref{HolderP} and \ref{ADP-dom}, we obtain the following result.

\begin{thm}
\label{NonTanP}
Let $\mathbb{G}$ be a Carnot group of Heisenberg-type and $\Om\subset\mathbb{G}$ be a bounded $C^\infty$ convex set. Set $D_T=\Om \times(0,T)$, with $0<T<\infty$. For every $p>1$, and $f\in L^{p}\left(S_T,d\sigma\times dt\right)$, then the generalized solution $H^{D_T}_f$ to the problem 
\[
\left\{
\begin{array}{ll}
Lu=0 & \mbox{in $D_T$},\\
u= f & \mbox{on $S_T$},\\
u=0 & \mbox{on $\Om\times\left\{0\right\}$} 
\end{array} \right.
\]
exists, and it is given by
\[
H^{D_T}_f(x,t)=\int_{S_T} P(x,t,y,s)f(y,s)d\sigma(y)ds.
\]
Moreover,  there exists a constant $C>0$ depending on $\mathbb G, \Om$ and $p$, such that 
\[
\left\|N_{\alpha}\left(H^{D_T}_f\right)\right\|_{L^p\left(S_T,d\sigma dt\right)}\leq C\left\|f\right\|_{L^p\left(S_T,d\sigma dt\right)}.
\]
Furthermore, $H^{D_T}_f$ converges nontangentially $d\sigma\times dt-$a.e. to $f$ on $S_T$.
\end{thm}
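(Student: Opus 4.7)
The plan is to combine Theorem \ref{HolderP} with Theorem \ref{ADP-dom}, and then invoke the nontangential maximal-function machinery of \cite{M1} and \cite{FrG}, in complete parallel to how Theorem \ref{NonTanK} was deduced from Theorem \ref{HolderK}. First I would observe that, by Theorem \ref{ADP-dom}, a $C^\infty$ convex bounded domain $\Om\subset \mathbb{G}$ in a Carnot group of Heisenberg type is a $\sigma$-$ADP_X$ domain, so all the hypotheses of Theorem \ref{HolderP} are satisfied. This yields two facts: a reverse H\"older inequality for $P(x,T;\cdot,\cdot)$ on every parabolic surface cylinder $\Delta_r(x_0,t_0)$ with respect to $d\sigma\times dt$, and the mutual absolute continuity of the $L$-parabolic measure $\omega^{(x,T)}$ and $d\sigma\times dt$ on $S_T$. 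The latter, combined with Green's representation formula and the boundary regularity provided by Theorem \ref{Derr} away from $\Sigma$, identifies the Radon--Nikodym density, so that for every $\vf\in C(S_T)$
\[
H^{D_T}_\vf(x,t) = \int_{S_T} P(x,t;y,s)\, \vf(y,s)\, d\sigma(y)\, ds.
\]

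The heart of the argument is the nontangential maximal-function inequality
\[
\left\|N_\alpha(H^{D_T}_f)\right\|_{L^p(S_T,d\sigma dt)} \leq C\left\|f\right\|_{L^p(S_T,d\sigma dt)}.
\]
To derive it I would upgrade the reverse H\"older estimate of Theorem \ref{HolderP} to an $A_\infty$-type statement for $P(x,T;\cdot,\cdot)$ relative to $d\sigma\times dt$, which by Gehring-type self-improvement (see \cite{FrG}) gives an $A_{p'}$ control for some finite $p'$. Coupling this with a weak-type $(1,1)$ bound for the parabolic Hardy--Littlewood maximal operator on $(S_T,d_p,d\sigma dt)$ — the requisite doubling being guaranteed by condition $4')$ in Definition \ref{ADP}, together with the Ahlfors regularity of Carnot-Carath\'eodory balls — a standard good-$\lambda$ / Marcinkiewicz interpolation argument, executed exactly as in \cite{M1} (and as in the proof of Theorem \ref{NonTanK}), produces the desired strong type $(p,p)$ bound on $N_\alpha$ for all $p>1$.

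Once the maximal function estimate is in hand, I would extend the Poisson integral formula from $C(S_T)$ to $L^p(S_T, d\sigma\times dt)$ by density and take the resulting extension as the definition of $H^{D_T}_f$; it automatically solves $Lu=0$ in $D_T$ with zero initial values. The a.e.\ nontangential convergence $H^{D_T}_f \to f$ on $S_T$ then follows from the standard Calder\'on-type argument: the conclusion is trivial on the dense subset $C(S_T)$ by Bony's maximum principle and the $NTA$ structure, and the $L^p$-bound on $N_\alpha$ propagates it to all $L^p$ data via the usual weak-type / density routine.

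The hard part will be the nontangential maximal-function estimate, i.e.\ extracting the $L^p$-boundedness of $N_\alpha$ from the reverse H\"older estimate. The delicate point is that $d\sigma\times dt$ is \emph{not} the intrinsic surface measure for the parabolic Carnot-Carath\'eodory metric, and can in fact be singular at the characteristic set $\Sigma$. What rescues the argument is precisely the balanced-degeneracy condition $4')$: it gives the compensated upper bound on $d\sigma$ near $\Sigma$ needed to validate the doubling and covering lemmas on the cylinders $\Delta_r(x_0,t_0)$ that the \cite{M1}, \cite{FrG} scheme relies upon. Without this ingredient, neither the reverse H\"older self-improvement nor the maximal-function interpolation would be available.
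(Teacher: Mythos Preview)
Your proposal is correct and follows essentially the same route as the paper: the paper's proof of Theorem \ref{NonTanP} consists of the single remark that one repeats the proof of Theorem \ref{NonTanK} verbatim, replacing Theorem \ref{HolderK} by Theorem \ref{HolderP}, the first identity in \eqref{Omega-sigma} by the second, and the doubling of $d\sigma_X$ (Corollary \ref{Basic-ND1}) by the doubling of $d\sigma$ (Theorem \ref{Doubling-sigma}).

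One small difference in execution is worth flagging. For the maximal-function estimate you invoke the abstract chain ``reverse H\"older $\Rightarrow$ $A_\infty$ $\Rightarrow$ Gehring self-improvement $\Rightarrow$ good-$\lambda$/Marcinkiewicz''. The paper's argument (as carried out in the proof of Theorem \ref{NonTanK}) is more direct and avoids Gehring and good-$\lambda$ entirely: from Theorem \ref{Non-Max} one has the pointwise bound $N_\alpha(H^{D_T}_f)\le C\,M_{\omega^{(x_1,t_1)}}(f)$, and then a single application of H\"older together with the reverse H\"older inequality of Theorem \ref{HolderP} yields $M_{\omega^{(x_1,t_1)}}(f)\le C\,M_{\sigma}(|f|^\beta)^{1/\beta}$ for any $1<\beta<p$; the $L^p$ bound then follows from the ordinary Hardy--Littlewood theorem on the homogeneous space $(S_T,d_p,d\sigma\,dt)$. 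Likewise, the paper obtains resolutivity via Theorem \ref{Brelot} (showing $L^p(d\sigma\,dt)\subset L^1(d\omega)$ by H\"older and Proposition \ref{Positive-P-K}) rather than by a density extension of the Poisson formula. Both routes are valid; the paper's is just slightly more elementary.
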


In closing, we briefly describe the organization of the paper. In Section 2 we collect some
known results on Carnot-Carath\'eodory metrics that are needed in the paper. In Section 3 we
discuss some known results on the subelliptic Dirichlet problem which constitute the potential
theoretic backbone of the paper. 

In Section 4, we use the interior Schauder estimates in \cite{DG1} to prove that if the cross-section $\Om$ of the cylinder $D_T = \Om \times (0,T)$ satisfies
a uniform outer tangent $X$-ball condition, then the horizontal gradient of the Green function
$G$ is bounded up to the lateral boundary $S_T$, hence, in particular, near $\Sigma_T$, see Theorem \ref{HorBound-G}. The proof
of such result rests in an essential way on the linear growth estimate provided by an ad-hoc barrier, inspired by the one used in \cite{CGN1}, \cite{LU1} and \cite{CGN4}. In the final part of the section we show that, by imposing the uniform
outer $X$-ball condition only in a neighborhood of the characteristic set $\Sigma$, we are still able to
obtain the boundedness of the horizontal gradient of $G$ up to the characteristic set, although we
now lose the uniformity in the estimates, see Corollary \ref{HeatBall-N-Sigma}.

In Section 5, we establish a Poisson type representation formula for domains whose base satisfy
the uniform outer $X$-ball condition in a neighborhood of the characteristic set. This result
generalizes  the Poisson type formula in \cite {FSa} to the non-Euclidean setting of this paper. If the Green function of a smooth domain had bounded horizontal gradient up to the
characteristic set, then such Poisson formula would follow in an elementary way from integration
by parts. As we previously stressed, however, things are not so simple and the boundedness of
$XG$ fails in general near the characteristic set. However, when $\Omega\subset\Rn$ satisfies the uniform
outer $X$-ball condition in a neighborhood of the characteristic set, then combining Theorem \ref{HorBound-G}
with the estimate
\[
K(x,t; y,s)\leq \left|XG(x,t;y,s)\right| , x \in \Omega, y\in \pa\Omega ,\quad \text{and}\quad 0\leq s\leq t, 
\]
see (\ref{Bound-P-K}), we prove the boundedness of the Poisson kernel $(y,s)\rightarrow K(x,t;y,s)$ on $S_T$. The main result in
section 5 is Theorem \ref{Main-P-K}. Solvability of (\ref{Dirichlet-prob2}) with data in Lebesgue classes requires, however, a much deeper
analysis.

In the opening of Section 6 we recall the definition of $NTA$-domain along
with those results from \cite{M1}, \cite{FGGMN} and \cite{FrG} which constitute the foundations of the present study. Using
these results we establish Theorem 8.9. The remaining part of the section is devoted to proving
Theorems \ref{HolderK}, \ref{NonTanK}, \ref{HolderP} and \ref{NonTanP}. 
\section{Preliminaries}
\label{sec:2}
In $\R^n$, with $n\geq 3$, we consider a system  $X =
\{X_1,\ldots,X_m\}$ of $C^\infty$ vector fields satisfying
H\"ormander's finite rank condition \eqref{frc}. A piecewise $C^1$
curve $\gamma:[0,\ell]\to \R^n$ is called subunitary in \cite{FPh}
if whenever $\gamma'(t)$ exists one has for every $\xi\in\Rn$
\[
<\gamma'(t),\xi>^2\ \leq\ \sum_{j=1}^m <X_j(\gamma(t)),\xi>^2 .
\]
We note explicitly that the above inequality forces $\gamma '(t)$ to
belong to the span of $\{X_1(\gamma (t)),\ldots,$ $ X_m(\gamma (t))\}$.
The subunit length of $\gamma$ is by definition $l_s(\gamma)=\ell$.
If we fix an open set $\Om\subset \Rn$, then given $x, y\in \Om$, denote by $\mathcal S_\Om(x,y)$ the collection
of all sub-unitary $\gamma:[0,\ell]\to \Om$ which join $x$ to $y$. The fundamental
accessibility theorem of Chow and Rashevsky, \cite{Ch}, \cite{Ra},
states that, if $\Om$ is connected, then for every
$x,y\in \Om$ there exists $\gamma \in \mathcal S_\Om(x,y)$. As a
consequence, if we define
\[
d_{\Om}(x,y) = \text{inf}\ \{l_s(\gamma)\mid \gamma \in \mathcal S_\Om(x,y)\},
\]
we obtain a distance on $\Om$, called the Carnot-Carath\'eodory distance, associated with the system $X$. When $\Om = \Rn$, we write $d(x,y)$ instead of $d_{\Rn}(x,y)$. It is clear that $d(x,y) \leq d_\Om(x,y)$, $x, y\in \Om$, for every connected open set $\Om \subset \Rn$. In \cite{NSW} it was proved that, given $\Om \subset \subset \Rn$, there exist $C, \ve >0$ such that
\begin{equation}\label{CCeucl}
C |x - y| \leq d_\Om(x,y) \leq C^{-1} |x - y|^\ve , \quad\quad\quad x, y \in \Om.
\end{equation}
This gives $d(x,y)\ \leq C^{-1} |x - y|^\ve$, $x, y\in \Om$, and therefore
\[
i: (\Rn, |\cdot|)\to (\Rn, d) \quad\quad\quad \text{is\,\ continuous}.
\]
It is easy to see that also the continuity of the opposite inclusion holds \cite{GN1}, hence the metric and the Euclidean topologies are equivalent.

For $x\in \Rn$ and $r>0$, we let $B_d(x,r) = \{y\in \Rn\mid d(x,y) < r \}$. The basic properties of these balls were established by Nagel, Stein and Wainger in their seminal paper \cite{NSW}. Denote by $Y_1,...,Y_l$ the collection of the $X_j's$ and of those commutators which are needed to generate $\R^n$. A formal ``degree'' is assigned to each $Y_i$, namely the corresponding order of the commutator. If $I=(i_1,\ldots,i_n),1\leq i_j\leq l$ is a $n$-tuple of integers, following \cite{NSW} we let $d(I)=\sum^{n}_{j=1}\operatorname{deg}(Y_{i_j})$, and $a_I(x)=\det(Y_{i_1},\ldots,Y_{i_n}).$ The Nagel-Stein-Waigner polynomial is defined by 
\[
\Lambda(x,r) = \sum_I |a_I(x)| r^{d_I}
\]
For a given bounded open set $U\subset\Rn$, we let
\begin{equation}
\label{Q-Qx}
Q=\sup\left\{d(I)|\left|a_I(x)\right|\neq0,x\in U\right\},\quad Q(x)=\inf\left\{d(I)|\left|a_I(x)\right|\neq0\right\},
\end{equation}
and notice that $n\leq Q(x)\leq Q$. The numbers $Q$ and $Q(x)$ are respectively called the \emph{local homogeneous dimension of $U$} and the homogeneous dimension at $x$ with respect to the system $X$.
\begin{thm}
For every  bounded open set $U\subset \Rn$, there exist constants $C, R_0>0$ such that, for any $x\in U$, and $0 < r \leq R_0$,
\begin{equation}\label{pol}
C\le \frac{\left|B_d(x,r)\right|}{\Lambda(x,r)} \le C^{-1}.
\end{equation}
As a consequence, one has with $C_1>0$, 
\begin{equation}\label{doubling}
|B_d(x,2r)| \leq C_1 |B_d(x,r)| \qquad\text{for every}\quad x\in U\quad\text{and}\quad 0< r \leq R_0.
\end{equation}
\end{thm}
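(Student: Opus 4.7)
The first estimate \eqref{pol} is the fundamental theorem of Nagel, Stein and Wainger from \cite{NSW}; a full proof is long, but I would outline it as follows. Fix $x\in U$ and $0<r\le R_0$. Among all $n$-tuples $I=(i_1,\dots,i_n)$ appearing in the definition of $\Lambda(x,r)$, select an $n$-tuple $I^{*}=(i_1^{*},\dots,i_n^{*})$ which essentially maximizes the sum, i.e.\ such that $|a_{I^{*}}(x)|\,r^{d(I^{*})}\ge c\,\Lambda(x,r)$ for some $c>0$ depending only on the cardinality of the index set. Since $a_{I^{*}}(x)\neq 0$, the vector fields $Y_{i_1^{*}},\dots,Y_{i_n^{*}}$ form a basis of $\R^n$ at $x$. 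Consider the map
\[
\Phi_x(u_1,\dots,u_n)=\exp\!\Bigl(\sum_{j=1}^n u_j Y_{i_j^{*}}\Bigr)(x),
\]
and set $\deg_j=\deg(Y_{i_j^{*}})$. The two ingredients to verify are: (i) $\Phi_x$ maps the ``anisotropic box'' $\{|u_j|\le c_0\,r^{\deg_j}\}$ diffeomorphically onto a set comparable to $B_d(x,r)$, and (ii) the Jacobian of $\Phi_x$ at $0$ equals $a_{I^{*}}(x)$, with bounded distortion on the box. Changing variables gives $|B_d(x,r)|\asymp |a_{I^{*}}(x)|\prod_j r^{\deg_j}=|a_{I^{*}}(x)|\,r^{d(I^{*})}\asymp \Lambda(x,r)$. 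The containment $\Phi_x(\text{box})\subset B_d(x,r)$ uses that the curves $t\mapsto \exp(tY_{i_j^{*}})(x)$ can be lifted to subunit curves of length $\lesssim r$ thanks to the Baker--Campbell--Hausdorff expansion and the choice of exponents $\deg_j$. The reverse inclusion $B_d(x,cr)\subset\Phi_x(\text{box})$ follows from a ball-box theorem argument; this is the main technical step in \cite{NSW} and I would simply quote it.

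For the doubling property \eqref{doubling}, the key observation is purely algebraic: by definition $\Lambda(x,r)$ is a polynomial in $r$ with nonnegative coefficients and with degrees $d(I)$ all bounded above by $Q$, the local homogeneous dimension of $U$ defined in \eqref{Q-Qx}. Therefore
\[
\Lambda(x,2r)=\sum_I |a_I(x)|\,(2r)^{d(I)}\le 2^{Q}\sum_I |a_I(x)|\,r^{d(I)}=2^{Q}\,\Lambda(x,r).
\]
Combining this inequality with \eqref{pol} applied at radii $r$ and $2r$ (the latter valid provided $2r\le R_0$, which may be arranged by replacing $R_0$ with $R_0/2$), we obtain
\[
|B_d(x,2r)|\le C^{-1}\Lambda(x,2r)\le C^{-1}2^{Q}\Lambda(x,r)\le C^{-2}2^{Q}|B_d(x,r)|,
\]
so that \eqref{doubling} holds with $C_1=C^{-2}2^{Q}$.

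The main obstacle is, of course, the full justification of \eqref{pol}, which I would not attempt to reprove in the paper; the argument is substantial and the cleanest approach is simply to invoke the theorem of \cite{NSW}. Given \eqref{pol}, the doubling statement is a one-line consequence of the polynomial structure of $\Lambda$, as shown above.
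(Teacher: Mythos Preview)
Your proposal is correct and, in fact, goes further than the paper: the paper states this theorem without proof, simply attributing both \eqref{pol} and \eqref{doubling} to Nagel, Stein and Wainger \cite{NSW}. Your sketch of the ball-box argument for \eqref{pol} and your explicit derivation of \eqref{doubling} from the polynomial structure of $\Lambda(x,r)$ are accurate and constitute more detail than the paper provides.
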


Let $\Gamma\left(x,y\right)=\Gamma\left(y,x\right)$ be the positive fundamental solution of the sub-Laplacian $-\sum_{i=1}^{m}X_{i}^{\ast }X_{i}$
and consider its level sets
\[
\Omega(x,r)=\left\{y\in\mathbb{R}^n:\Gamma(x,y)>\frac{1}{r}\right\}.
\]
Let us recall the notion of $X$-ball, introduced in \cite{CGN1}.
\begin{definition}
For every $x\in \mathbb{R}^{n}$, and $r>0$, the set 
\begin{equation}
\label{X-ball}
B\left(x,r\right)=\left\{y \in \mathbb{R}^{n}: \Gamma\left(x,y\right)>\frac{1}{E\left(x,r\right)}\right\},
\end{equation}
where $E(x,r)\overset{def}{=} \Lambda(x,r)/r^2$, will be called the $X$-ball, centered at $x$ with radius $r$.
\end{definition}
The $X$-balls are equivalent to the Carnot-Carath\'eodory balls: for every $U\subset\mathbb{R}^{n}$, there exists $a>1$, depending on $U$ and $X$, such that 
\begin{equation}
\label{X-balls}
B_{d}\left(x,a^{-1}r\right)\subseteq B\left(x,r\right)\subseteq B_{d}\left(x,ar\right),
\end{equation} 
for $x\in U$, $0<d\left(x,y\right)\leq R_{0}$, for some local parameter $R_{0}$, which depends only on $n$ and $X$. 

\subsection{Strong maximum principle}

Consider the cylinder $D_T=\Omega\times (0,T)$, where $\Omega\subset\Rn$ and $T>0$. We define the class $\Gamma^2(D_T)$ to be the set of all continuous functions $u$ on $D_T$ such that $\partial_t u$ as well as $X_iu$ and $X_iX_ju$ are continuous in $D_T$ for all $i,j=1,\ldots,m$.

\begin{thm}\label{strongmax}  
Let $\Omega \subset \R^{n}$ be a
connected, bounded open set, and  $T>0$. 
Let $u\in \Gamma ^{2}(D _{T})$ and $u\leq 0$ in $D_T$.
\begin{enumerate}
 \item Suppose that $Lu\geq 0$ in $D_{T}$, then, if $u(x_{0},t_{0})=0$ for some $(x_{0},t_{0})\in
D_{T}$, we have $u(x,t)\equiv 0$ whenever $(x,t)\in D _{T}\cap
\{t:t\leq t_{0}\}$.
\item Suppose that $L^*u\geq 0$ in $D_{T}$, then, if $u(x_{0},t_{0})=0$ for some $(x_{0},t_{0})\in
D_{T}$, we have $u(x,t)\equiv 0$ whenever $(x,t)\in D _{T}\cap
\{t:t\geq t_{0}\}$.

\end{enumerate}
\end{thm}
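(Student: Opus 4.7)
The plan is to recognize both $L$ and $L^*$ as second-order operators in Hörmander form with drift and invoke the propagation of maximum principle due to Bony~\cite{Bony}. Starting from the factorization $L = \sum_{j=1}^m X_j^2 + X_0$ with $X_0 = -\pa_t - \sum_{j=1}^m a_j X_j$ given in the introduction, and regarding the $X_j$ as vector fields on $\R^{n+1}$ with trivial time component, the system $\{X_0, X_1, \ldots, X_m\}$ satisfies the finite rank condition in $\R^{n+1}$, as already observed after \eqref{HeatOp}. The hypotheses $Lu \geq 0$, $u \leq 0$ and $u(x_0, t_0) = 0$ assert exactly that $u$ is an $L$-subsolution attaining its supremum at the interior point $(x_0, t_0) \in D_T$, so Bony's propagation principle delivers $u \equiv 0$ on the set $\mathcal{O}(x_0, t_0)$ of points of $D_T$ reachable from $(x_0, t_0)$ by absolutely continuous curves tangent to $\pm X_1, \ldots, \pm X_m$ or to $X_0$.

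The second step is to identify $\mathcal{O}(x_0, t_0)$ with $D_T \cap \{t \leq t_0\}$. Since each $X_j$ has zero time component, motion along $\pm X_j$ preserves $t$; by Chow's accessibility theorem applied to the Hörmander system $\{X_1, \ldots, X_m\}$ on the connected set $\Om$, the time-slice $\Om \times \{t_0\}$ is therefore entirely contained in $\mathcal{O}(x_0, t_0)$. To reach a point $(x, t)$ with $t < t_0$, one follows an integral curve of $X_0$ for time $t_0 - t$, producing a curve $(\xi(s), t_0 - s)$ whose $t$-coordinate strictly decreases and which terminates at some $(\bar x, t) \in D_T$; a subsequent horizontal subunit curve joining $\bar x$ to $x$ within $\Om$ stays in the slice $\{t\}$. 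Concatenating these pieces keeps us inside $D_T$ and exhibits $(x, t) \in \mathcal{O}(x_0, t_0)$, establishing statement (1).

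Statement (2) is handled by the parallel decomposition $L^* = \sum_{j=1}^m X_j^2 + X_0^\flat$ with $X_0^\flat = \pa_t - \sum_{j=1}^m a_j X_j$; again $\{X_0^\flat, X_1, \ldots, X_m\}$ satisfies the finite rank condition in $\R^{n+1}$, Bony's principle applies verbatim, and the orbit now coincides with $D_T \cap \{t \geq t_0\}$ because integral curves of $X_0^\flat$ strictly \emph{increase} $t$. The only non-routine ingredient is the identification of $\mathcal{O}(x_0, t_0)$ with $D_T \cap \{t \leq t_0\}$ (respectively $\{t \geq t_0\}$); this is a standard chaining argument combining Chow's theorem with the transversality of the drift to the horizontal distribution, and it is what makes the parabolic strong maximum principle fall out of Bony's hypoelliptic propagation result.
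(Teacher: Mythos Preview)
Your proposal is correct and is precisely the paper's approach: the paper's proof consists of the single line ``It follows from Theorem 3.2 in \cite{Bony},'' and you have simply unpacked that citation by writing $L = \sum_j X_j^2 + X_0$ and identifying Bony's propagation set with $D_T \cap \{t \le t_0\}$ via Chow's theorem on slices plus the drift. One small imprecision worth noting: a single integral curve of $X_0$ run for the full duration $t_0 - t$ need not remain in $\Omega$, since the spatial component $-\sum_j a_j X_j$ of the drift could carry it across $\partial\Omega$; the chaining therefore has to alternate short $X_0$-segments with horizontal Chow arcs (recentering away from $\partial\Omega$ before each drift step), but you already acknowledge this as a standard chaining argument, so the repair is routine.
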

\begin{proof}
It follows from
Theorem 3.2 in \cite{Bony}.
\end{proof}

\subsection{Gaussian bounds}

We close this section with the following basic estimate due to Kusuoka and Stroock \cite{KS}, see also \cite{JSC}. In order to use this result we need to assume  that outside a compact set $K\subset\R^n$, with  $\Omega\subset K$,  the system $X$ equals the standard basis for $\Rn$. Since our results are local in nature the imposition of such  hypothesis on the system $X$ has no consequences for our theory.
\begin{thm}
\label{GaussB}
The fundamental solution $p(x,t;\xi ,\tau )=p(x;\xi ,t-\tau )$ with
singularity at $(\xi ,\tau )$ satisfies the following size estimates: there
exists $M=M\left( X\right) >0$ and for every $k,s\in \mathbb{N}\cup \left\{
0\right\} $, there exists a constant $C=C\left( X,k,s\right)>0 $, such that 
\begin{equation*}
\left\vert \frac{\partial {^{k}}}{\partial {t^{k}}}
X_{j_{1}}X_{j_{2}}...X_{j_{s}}p(x,t;\xi ,\tau )\right\vert \leq \frac{C}{
\left( t-\tau \right) ^{s+2k}}\frac{1}{\left\vert B_{d}\left( x,\sqrt{t-\tau }
\right) \right\vert }\exp \left( -\frac{Md\left( x,\xi \right) ^{2}}{t-\tau }
\right),
\end{equation*}
\begin{equation*}
p(x,t;\xi ,\tau )\geq \frac{C^{-1}}{\left\vert B_{d}\left( x,
\sqrt{t-\tau }\right) \right\vert }\exp \left( -\frac{M^{-1}d\left( x,\xi
\right) ^{2}}{t-\tau }\right),
\end{equation*}
for every $x,\xi \in \mathbb{R}^{n}$ and any $-\infty <\tau <t<\infty$.
\end{thm}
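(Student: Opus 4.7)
The estimate stated here is really two separate assertions — a two-sided Gaussian bound on $p$ itself, and scale-invariant upper bounds on arbitrary time and horizontal spatial derivatives — each requiring a different tool. The plan is to reduce the two-sided pointwise control to the abstract machinery of Dirichlet forms over a doubling metric measure space, and to handle the derivatives separately by semigroup/commutator arguments.

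First I would note that the form $\mathcal E(u,u) = \sum_j \int (X_j u)^2 dx$ associated with the symmetric part of $L$ is a strongly local, regular Dirichlet form on $L^2(\R^n)$, and that the intrinsic distance of this form coincides with the CC distance $d$ up to a constant (this is Jerison's result, combined with \eqref{pol}). The doubling property \eqref{doubling} is already available, and the subelliptic weak Poincar\'e inequality on $d$-balls is classical (Jerison). With these two ingredients in hand, the Saloff-Coste–Sturm theorem supplies both the on-diagonal bound $p(x,t;x,0) \asymp |B_d(x,\sqrt t)|^{-1}$ and the parabolic Harnack inequality for $L$. Chaining the lower on-diagonal estimate along a subunit geodesic from $\xi$ to $x$ by Harnack's inequality at the scale $\sqrt{t-\tau}$ gives the stated lower Gaussian bound; the matching upper Gaussian bound is obtained by Davies' perturbation trick, multiplying $p$ by $e^{\alpha \psi}$ for an appropriate distance-like Lipschitz function $\psi$ with $\sum_j (X_j \psi)^2 \le 1$, running an $L^2$ energy estimate, and optimizing in $\alpha$.

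For the derivative bounds, the cleanest route is analytic-semigroup arguments. Writing $p = e^{-tL_0}$ where $L_0$ is the time-independent part of $L$ (modulo the drift, which one absorbs), the analyticity of $e^{-tL_0}$ on $L^2$ together with the already-established pointwise Gaussian upper bound gives, via the Cauchy integral formula in the spectral parameter, the time-derivative bound with the factor $(t-\tau)^{-k}$; the Gaussian decay in $x$ survives because one can iterate the Davies estimate on $\partial_t^k p$. For the horizontal derivatives, since $[X_j, L_0]$ is again a differential operator of horizontal degree at most two, one may use the semigroup identity $X_j e^{-tL_0} = e^{-tL_0/2} \cdot X_j e^{-tL_0/2}$ combined with the $L^2 \to L^\infty$ Gaussian off-diagonal bound for $X_j e^{-sL_0}$ (which itself is obtained by a Caccioppoli/Davies argument on the identity $\mathcal E(e^{-sL_0}u, e^{-sL_0}u) \le (2s)^{-1} \|u\|_2^2$). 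Iterating this commutator construction $s$ times produces the claimed $(t-\tau)^{-s/2}$ loss in the spatial derivatives, and Gaussian decay is preserved throughout.

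The main obstacle in this plan is the \emph{spatial} derivative estimate of high order $s$, since the vector fields $X_j$ do not commute with $L_0$ outside the Carnot-group case, and brackets with $L_0$ can increase horizontal order. Controlling the resulting commutators uniformly in scale is what forces either the pseudodifferential parametrix construction of Jerison–S\'anchez-Calle or, alternatively, the Malliavin-calculus approach of Kusuoka–Stroock, in which one differentiates the stochastic flow associated with $L_0$ and applies integration by parts in Wiener space. Either way, once the commutator iteration is controlled on some scale, rescaling via the doubling property \eqref{doubling} and the uniform subelliptic estimates of Nagel–Stein–Wainger propagates the bound to all $t-\tau \le R_0^2$, and the global-in-time version follows from the assumption that $X$ agrees with the Euclidean frame outside a compact set, which reduces large-time behavior to the classical heat kernel.
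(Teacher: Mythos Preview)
The paper does not prove this theorem at all: it is stated in the preliminaries section as a known result, attributed to Kusuoka--Stroock \cite{KS} and Jerison--S\'anchez-Calle \cite{JSC}, and is used throughout as a black box. So there is no ``paper's own proof'' to compare against; what you have written is an attempt to reprove a cited result.

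That said, a few comments on your sketch. Your route to the two-sided pointwise bound via the Saloff-Coste--Sturm characterization (doubling plus Poincar\'e implies parabolic Harnack implies Gaussian bounds) is correct and is in fact a later, more streamlined alternative to the original probabilistic argument of Kusuoka--Stroock and the parametrix construction of Jerison--S\'anchez-Calle. For the time derivatives, analyticity of the semigroup is indeed the standard device.

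Where your outline has a genuine gap is the spatial derivatives. The identity you write, $X_j e^{-tL_0} = e^{-tL_0/2}\, X_j\, e^{-tL_0/2}$, is simply false unless $X_j$ commutes with $L_0$; it is not a semigroup identity at all. What one actually has is a Duhamel-type expansion involving the commutator $[X_j,L_0]$, and iterating this produces terms of increasing horizontal order that must be controlled uniformly in scale. You recognize this yourself in the last paragraph, and you correctly identify that closing this loop requires either the Rothschild--Stein/Jerison--S\'anchez-Calle parametrix machinery or the Malliavin-calculus estimates of Kusuoka--Stroock. In other words, for the derivative estimate your sketch ultimately points back to the very references the paper cites, rather than supplying an independent argument. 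That is not a flaw so much as an honest acknowledgment that this is a deep result whose full proof lies outside the scope of a few paragraphs.
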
  

\section{Dirichlet Problem}

In the following we let $D\subset \R^{n+1}$ be any bounded open set, for which we assign $f\in C(\partial_pD)$,  and we
study the Dirichlet problem 
\begin{equation}\label{dirp}
\begin{cases}
Lu=0,\ \ \ \ \text{in}\ D,
\\
u=f \ \ \ \ \text{on}\  \partial_p D.
\end{cases}
\end{equation}
If $u:D\to\R$ is a smooth function satisfying 
$L u=0$ in $D$, then we say that $u$ is $L$-parabolic in $D $. We
denote by $\mathcal P_L(D)$ the linear space of functions which are $L$-parabolic in $D$. 

We 
say that $D$ is $L$-regular if for any $f\in C(\partial_p D)$ there exists a unique function $H_f^D\in \mathcal P_L(D)$ such that $\lim_{(x,t)\to
(x_0,t_0)}H_f^D(x,t)=f(x_0,t_0)$ for every $(x_0,t_0)\in\partial_p D$.
Furthermore, if $D$ is $L$-regular, then in view of Theorem \ref{strongmax} (one actually only needs the weaker form of it) for every fixed $(x,t)\in D$ the map $f\mapsto H_f^D(x,t)$ defines a positive linear functional on $C(\partial_p D)$.
By the Riesz representation theorem there exists a unique Borel
measure $\omega^{(x,t)} =\omega_D^{(x,t)}$, supported in $\partial_p D$, such that for every $f\in C(\partial_p D)$ one has
\begin{equation}  \label{parabolic.measure.L.regular}
H_f^D(x,t)=\int_{\partial_p D}f(y,s)d\omega^{(x,t)}(y,s).
\end{equation}
We will refer to $\omega^{(x,t)}$ as the $L$-\emph{parabolic measure} relative to $D$
and $(x,t)$. 

A lower semi-continuous
function $u:D\to\,]-\infty,\infty]$ is said to be $H$-superparabolic
in $D$ if $u<\infty$ in a dense subset of $D$ and if 
\begin{equation*}
u(x,t)\geq\int_{\partial V}u(y,s)d\omega_V^{(x,t)}(y,s),
\end{equation*}
for every open $L$-regular set $V\subset\overline{V}\subset D$ and for every 
$(x,t)\in V$. We denote by $\overline{S}(D)$ the set of $L$-superparabolic functions in $D$, and by $\overline{S}^+(D)$ the set of the
functions in $\overline{S}(D)$ which are nonnegative. A function $v:D\rightarrow [-\infty,\infty[$ is said to be $L$-subparabolic in $D$
if $-v\in\overline{S}(D)$ and we write $\underline{S}(D):=-\overline{S}(D)$.
As the collection of $L$-regular sets is a basis for the Euclidean topology
according to Corollary 5.2 in \cite{Bony}, it follows that $\overline{S}(D)\cap\underline{S}(D)= \mathcal P_L(D)$. Finally, we recall that $H_f^D$ can be realized as
the generalized solution in the sense of Perron-Wiener-Brelot-Bauer to the
problem \eqref{dirp}. In particular, 
\begin{equation}  \label{parabolic.measureumu}
\inf\overline{\mathcal{U}}_f^D=\sup\underline{\mathcal{U}}_f^D=H_f^D,
\end{equation}
where we have indicated with $\overline{\mathcal{U}}_f^D$ the collection of all $u\in\overline{S}(D)$ such that $\inf_{D}u>-\infty$, 
and
\[
\liminf_{(x,t)\to(x_0,t_0)}u(x,t)\geq f(x_0,t_0),
\,\forall\,(x_0,t_0)\in\partial_p D,
\]
 and with $\underline{\mathcal{U}}_f^D$ the collection of all $u\in\underline{S}(D)$ for which $\sup_{D}u<\infty$, and 
\[
\limsup_{(x,t)\to (x_0,t_0)}u(x,t)\leq f(x_0,t_0), \quad \text{for every $(x_0,t_0)\in\partial_p D$}.
\]

\begin{lemma}\label{gendir} 
Let $D\subset\R^{n+1}$ be a bounded open set, let $f\in
C(\partial_pD)$, and let $u$ be the generalized
Perron-Wiener-Brelot-Bauer solution to the problem  \eqref{dirp}, i.e., $u=H_f^D$
where $H_f^D$ be defined as in \eqref{parabolic.measureumu}. Then, $u\in
\Gamma^2(D)$.
\end{lemma}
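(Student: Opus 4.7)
The plan is to show $u \in \mathcal P_L(D)$ and then appeal to the fact that, by the definition given in the excerpt, every member of $\mathcal P_L(D)$ is smooth and satisfies $Lu=0$, so in particular $\mathcal P_L(D)\subset \Gamma^2(D)$. Since the identity $\overline S(D)\cap \underline S(D) = \mathcal P_L(D)$ has already been recorded just above the lemma (as a consequence of Corollary~5.2 in \cite{Bony}), everything reduces to placing $u = H_f^D$ simultaneously in $\overline S(D)$ and in $\underline S(D)$.

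To argue $u\in\overline S(D)$, I would invoke the classical Perron lower-envelope theorem in the form valid for the abstract potential-theoretic setting of \cite{Bony}: the pointwise infimum of a uniformly bounded family of $L$-superparabolic functions, after its lower-semicontinuous regularization, is again $L$-superparabolic. Since $f$ is continuous on the compact set $\pa_p D$, hence bounded, the class $\overline{\mathcal U}_f^D$ is nonempty (any constant $\geq \sup f$ lies in it) and uniformly bounded below by $\inf f$, so the envelope construction is legitimate. Resolutivity of continuous boundary data---a Wiener-type result transplanted to the subelliptic setting via the basis of $L$-regular sets provided by Corollary~5.2 in \cite{Bony}---then guarantees that the regularized envelope already coincides with $\inf \overline{\mathcal U}_f^D = u$ throughout $D$. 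An entirely symmetric argument applied to the upper envelope of $\underline{\mathcal U}_f^D$ places $u$ in $\underline S(D)$ as well.

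The principal substantive step, already absorbed into the identity $\overline S(D)\cap \underline S(D) = \mathcal P_L(D)$ invoked above, is the upgrade from \emph{mere membership in both envelope classes} to genuine $C^\infty$ smoothness together with $Lu=0$ in the classical sense. This is exactly what a Poisson-modification procedure on $L$-regular basis sets---combined with H\"ormander's hypoellipticity theorem \cite{H}---accomplishes: the envelope membership forces the local mean value identity $u(x,t)=\int_{\pa_p V}u\,d\omega_V^{(x,t)}$ for every $L$-regular $V\subset\subset D$, and hypoellipticity of $L$ promotes any locally bounded function satisfying such an identity to a smooth solution of $Lu=0$. Once that upgrade is granted, the present lemma reduces to a purely formal Perron-type observation, and I expect no further obstacle beyond the bookkeeping needed to cite \cite{Bony} and \cite{H} correctly.
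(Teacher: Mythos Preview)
Your approach is essentially correct but differs from the paper's own proof, which is a one-line citation: ``This follows from Theorem 1.1 in \cite{U}.'' You instead reconstruct the conclusion from Bony's axiomatic potential theory together with H\"ormander's hypoellipticity, which is more self-contained but also more work.

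Two small clarifications are in order. First, the statement ``the pointwise infimum of a uniformly bounded family of $L$-superparabolic functions, after lower-semicontinuous regularization, is again $L$-superparabolic'' is not true for arbitrary families; one needs the family to be a Perron family (stable under finite minima and under harmonic modification on $L$-regular subsets). The class $\overline{\mathcal U}_f^D$ has these properties, and that is what actually drives the argument. Second, the appeal to resolutivity is somewhat misplaced: in Bauer--Bony theory the envelopes $\overline H_f$ and $\underline H_f$ are each already harmonic (hence in $\mathcal P_L(D)$) whenever $f$ is bounded, \emph{independently} of resolutivity---resolutivity merely asserts their equality. So the cleaner route is: $\overline{\mathcal U}_f^D$ is a Perron family, hence $\overline H_f$ is harmonic on every $L$-regular $V\subset\subset D$ by Poisson modification and Harnack-type convergence, hence $\overline H_f\in\mathcal P_L(D)$ by hypoellipticity; since $u=H_f^D=\overline H_f$, you are done. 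Your third paragraph already contains this correct mechanism; it just deserves to be the main line rather than an afterthought.
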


\begin{proof}
This follows from Theorem 1.1 in \cite{U}. 
\end{proof}
Here, it is important to recall that, thanks to the results in \cite{Bony}, the following result of Brelot type holds.
\begin{thm}
\label{Brelot}
A function $f$ is resolutive if and only if $f\in L^1\left(\pa_p D,d\omega^{(x,t)}\right)$, for one (and therefore for all) $\left(x,t\right)\in D$.
\end{thm}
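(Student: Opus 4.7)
The strategy is to derive this parabolic Brelot-type resolutivity theorem from the abstract potential-theoretic framework of Bony \cite{Bony} already adopted throughout Section~3 to construct the PWBB solution, the parabolic measure $\omega^{(x,t)}$, and the classes $\overline{S}(D)$, $\underline{S}(D)$. The skeleton of the argument is a two-sided approximation in $L^{1}(d\omega^{(x,t)})$ by continuous boundary data, combined with the linearity and monotonicity of the PWBB operator, followed by a Harnack-chain argument to render the conclusion independent of the interior base point.

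For the \emph{sufficiency} direction, assume $f\in L^{1}(\pa_{p}D,d\omega^{(x,t)})$. Since $d\omega^{(x,t)}$ is a finite Radon measure on the compact set $\pa_{p}D$, its regularity produces sequences $\varphi_{n}^{\pm}\in C(\pa_{p}D)$ with $\varphi_{n}^{-}\leq f\leq\varphi_{n}^{+}$ and $\int(\varphi_{n}^{+}-\varphi_{n}^{-})\,d\omega^{(x,t)}\to 0$. Each $\varphi_{n}^{\pm}$ is resolutive by the construction preceding \eqref{parabolic.measure.L.regular}, with $H^{D}_{\varphi_{n}^{\pm}}(x,t)=\int\varphi_{n}^{\pm}\,d\omega^{(x,t)}$. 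Using the monotonicity of the upper and lower Perron envelopes together with \eqref{parabolic.measureumu}, we obtain at $(x,t)$
\[
\int\varphi_{n}^{-}\,d\omega^{(x,t)}\;\leq\;\sup\underline{\mathcal{U}}_{f}^{D}(x,t)\;\leq\;\inf\overline{\mathcal{U}}_{f}^{D}(x,t)\;\leq\;\int\varphi_{n}^{+}\,d\omega^{(x,t)},
\]
and passing to the limit $n\to\infty$ forces equality and identifies the common value with $\int f\,d\omega^{(x,t)}$. Hence $f$ is resolutive.

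For the \emph{necessity} direction, if $f$ is resolutive, truncate by $f_{k}:=(f\wedge k)\vee(-k)$; each $f_{k}$ is bounded and hence resolutive by the same continuous-approximation argument, with $H^{D}_{f_{k}}(x,t)=\int f_{k}\,d\omega^{(x,t)}$. Resolutivity of $f^{\pm}$ separately forces $H^{D}_{|f|}(x,t)$ to be finite, so monotone convergence yields $\int|f|\,d\omega^{(x,t)}<\infty$. To upgrade the statement from one interior base point to all of $D$, I would invoke the parabolic Harnack inequality of Kusuoka--Stroock combined with the Gaussian bounds of Theorem~\ref{GaussB}: for any $(x_{1},t_{1}),(x_{2},t_{2})\in D$ with $t_{1}<t_{2}$, a Harnack-chain argument together with the strong maximum principle Theorem~\ref{strongmax} gives $\omega^{(x_{1},t_{1})}\ll\omega^{(x_{2},t_{2})}$ on the portion of $\pa_{p}D$ relevant for both, and iterating both forward and backward along such chains transfers the $L^{1}$ property between any two reference points. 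The principal obstacle is precisely this last step: since the parabolic measure behaves asymmetrically in time, the elliptic mutual absolute continuity is replaced by a strictly one-sided comparison, so making the equivalence of $L^{1}(d\omega^{(x,t)})$ for varying $(x,t)$ rigorous requires the doubling property of $\omega^{(x,t)}$ on parabolic surface balls together with the Harnack-chain machinery developed in \cite{Bony} and \cite{M1}.
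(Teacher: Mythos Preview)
The paper does not supply its own proof of this statement: it merely records that the result ``holds thanks to the results in \cite{Bony}'' and moves on. So there is no argument in the paper to compare your proposal against; your write-up is an attempt to unpack what such a citation would entail.

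That said, your sufficiency argument contains a genuine error. You assert that regularity of the finite Radon measure $\omega^{(x,t)}$ yields \emph{continuous} functions $\varphi_{n}^{\pm}\in C(\pa_{p}D)$ with the \emph{pointwise} sandwich $\varphi_{n}^{-}\le f\le\varphi_{n}^{+}$ and $\int(\varphi_{n}^{+}-\varphi_{n}^{-})\,d\omega^{(x,t)}\to 0$. This is false for a general $f\in L^{1}$: already for $f=\chi_{E}$ with $E$ closed of positive measure and empty interior, any continuous $\varphi\ge f$ must exceed $1/2$ on an open neighborhood of $E$, and one cannot force the gap to vanish. What regularity of Radon measures does give is the Vitali--Carath\'eodory approximation: an upper semicontinuous $u_{n}\le f$ and a lower semicontinuous $v_{n}\ge f$ with $\int(v_{n}-u_{n})\,d\omega^{(x,t)}\to 0$. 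One then needs the additional facts, standard in Bauer's axiomatic theory and available in Bony's framework, that lower semicontinuous functions bounded below (resp.\ upper semicontinuous bounded above) are resolutive, with PWBB solution given by integration against $d\omega^{(x,t)}$. With that correction your sandwich argument goes through.

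Your necessity direction is also too quick: resolutivity of $f$ does not a priori give resolutivity of $f^{+}$ and $f^{-}$ separately, so ``resolutivity of $f^{\pm}$ separately forces $H^{D}_{|f|}(x,t)$ to be finite'' is circular as written. The usual route is to take any $u\in\overline{\mathcal U}^{D}_{f}$ with $u(x,t)<\infty$; since $u$ is superparabolic and bounded below, its Riesz-type boundary trace dominates $f$, and evaluating against $\omega^{(x,t)}$ yields $\int f^{+}\,d\omega^{(x,t)}<\infty$; the symmetric argument with $\underline{\mathcal U}^{D}_{f}$ controls $f^{-}$. Finally, for the ``one and therefore all'' clause you are overengineering: the paper already records in the introduction that Harnack gives $\omega^{(x,s)}\ll\omega^{(y,t)}$ for $s<t$, and since any two points of $D$ can be joined through a third with larger time coordinate, mutual absolute continuity (hence coincidence of the $L^{1}$ classes) follows without invoking doubling or the NTA machinery.
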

In the following we are concerned with the issue of regular boundary points
and we note, concerning the solvability of the Dirichlet problem for the
operator $L$, that in \cite{U} Uguzzoni developes what he refers to as a ``cone
criterion'' for non-divergence equations modeled on H{\"o}rmander vector
fields. This is a generalization of the well-known positive density condition of classical potential theory. In the following we describe his result in the setting of domains of
the form $D_T=\Omega\times (0,T)$ where $\Omega\subset \R^{n}$ is
assumed to be a bounded domain. In \cite{U}
a bounded open set $\Omega$ is said to have \emph{outer positive $d$-density at $x_0\in\partial\Omega$} if there exist $r_0$, $\vt>0$ such that 
\begin{equation}  \label{cone}
|B_d(x_0,r)\setminus\overline\Omega|\geq\vt |B_d(x_0,r)|,\ \ \text{for all}\  
r\in (0,r_0).
\end{equation}
Furthermore, if $r_0$ and $\vt$ can be chosen independently of $x_0$ then one says that $\Omega$ satisfies the outer positive $d$-density condition. The
following lemma is a special case of Theorem 4.1 in \cite{U}.

\begin{lemma}\label{Dirichlet0} 
Assume that $\Omega$ satisfies the outer positive $d$-density condition. Given $f\in C(\partial_p D_T)$, there exists a unique solution $u\in\Gamma^{2}(D_T)\cap C(D_T\cup\partial_p D_T)$ to the problem  \eqref{dirp}. In particular, $D_T$ is $L$-regular for the Dirichlet problem \eqref{dirp}.
\end{lemma}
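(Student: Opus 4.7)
My plan is to derive the statement from Uguzzoni's parabolic cone criterion (Theorem 4.1 of \cite{U}), which asserts $L$-regularity of a bounded space-time open set once every point of its parabolic boundary has outer positive density with respect to parabolic cylinders. Uniqueness of the PWBB solution $H_f^{D_T}$ is already in place from Bony's maximum principle (see the discussion preceding \eqref{Dirichlet-prob2}), so what remains is to show that the boundary data are attained continuously, i.e., that every point of $\partial_p D_T$ is $L$-regular.

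First I would verify the parabolic outer density condition at each point of $\partial_p D_T = S_T \cup (\Omega\times\{0\})$. For $(x_0,t_0)\in\Omega\times\{0\}$ this is immediate: the lower half $B_d(x_0,r)\times(t_0-r^2,t_0)$ of the parabolic cylinder $C_r(x_0,t_0)$ lies entirely outside $D_T$ for $r$ small enough, giving outer density at least $\tfrac{1}{2}$. For $(x_0,t_0)\in S_T$, hypothesis \eqref{cone} supplies $\vartheta,r_0>0$ with $|B_d(x_0,r)\setminus\overline\Omega|\geq\vartheta|B_d(x_0,r)|$ for $r\in(0,r_0)$; choosing $r$ so that also $(t_0-r^2,t_0+r^2)\subset(0,T)$, the product $(B_d(x_0,r)\setminus\overline\Omega)\times(t_0-r^2,t_0+r^2)$ sits inside $C_r(x_0,t_0)\setminus\overline{D_T}$ and yields the uniform lower density $\vartheta/2$.

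Once the density condition is verified, Theorem 4.1 of \cite{U} produces a classical solution $u\in C(D_T\cup\partial_p D_T)$ that realizes the PWBB solution $H_f^{D_T}$; the interior regularity $u\in\Gamma^2(D_T)$ is already recorded in Lemma \ref{gendir}, and uniqueness from the PWBB theory closes the argument. The hard part is matching Uguzzoni's density formulation, which is stated directly in terms of space-time parabolic balls for a generic bounded open set in $\R^{n+1}$, to the product hypothesis on $\Omega$ here. In particular, one must check uniformity near the corner set $\partial\Omega\times\{0\}$, where the spatial boundary meets the initial time slice; for cylindrical $D_T$ this is essentially bookkeeping, since each factor contributes its own outer density, but it is the step that requires genuine verification.
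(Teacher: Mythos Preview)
Your approach is correct and matches the paper's: the paper simply records that Lemma \ref{Dirichlet0} is a special case of Theorem 4.1 in \cite{U} and gives no further argument, while you supply the routine verification that the spatial outer $d$-density on $\Omega$ lifts to the parabolic density hypothesis on $D_T$ required by Uguzzoni's criterion. Your bookkeeping at the bottom, the lateral boundary, and the corner set is sound (indeed at corner points $(x_0,0)\in\partial\Omega\times\{0\}$ the lower half-cylinder already gives density $\tfrac12$), so there is nothing to add.
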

For a cylinder $D_T=\Omega\times \left(0,T\right)$, where $\Omega\subset \mathbb{R}^{n}$ is open and bounded, the Green function, $G$, is given by $G\left(x,t;y,s\right)=p\left(x,t;y,s\right)-h_{\left(y,s\right)}\left(x,t\right)$ where $h_{\left(y,s\right)}$ solves
\begin{equation*}
\left\{
\begin{array}{ll}
Lh_{\left(y,s\right)}=0 & \mbox{in $D_T$},\\
h_{\left(y,s\right)}\left(x,t\right)= p\left(x,t;y,s\right) & \mbox{on $\partial_{p} D_T$}.
\end{array} \right.
\end{equation*}
The existence of the Green function follows from the results of \cite{Bony} and \cite{BBLU2}. In the following result we use the short notation $z = (x,t), \zeta = (y,s)$.
\begin{thm}
\label{Green-th}
Let $\Omega\subset\R^n$ be a regular domain for the sub-Laplacian $-\sum_{j=1}^m X_j^\star X_j$. Then, there exists a Green function $G = G^{\Omega}$ for $L$ and the cylinder $D_T = \Omega\times (0,T)$ with the properties listed below:
\begin{enumerate}
	\item G is a continuous function defined on the set $\left\{(z;\zeta)\in (\overline{\Omega}\times [0,T))\times(\Omega\times (0,T)) :
z\neq\zeta\right\}$. Moreover, for every  fixed $\zeta\in D_T, G(\cdot;\zeta)\in C^{\infty}(D_T\setminus\left\{\zeta\right\})$,
and we have
\[
L(G(\cdot; \zeta)) = 0 \quad \mbox{in $(D_T)\setminus\left\{\zeta\right\}$}, \quad G(\cdot;\zeta) = 0 \quad\mbox{in $\pa\Omega\times (0,T)$}
\]
  \item We have $0\leq G\leq p$. Moreover, $G(x,t;y,s) = 0$ if $t<s$.
  \item For every $\varphi\in C(\overline{\Omega})$ such that $\varphi=0$ in $\pa\Omega$ and for every fixed $s\in (0,T)$ the function
  \[
  u(x,t)=\int_{\Omega}G(x,t;y,s)\varphi(y)dy, \quad x\in\overline{\Omega}, t>s
  \]
  belongs to the class $C^{\infty}(\Omega\times(s,T))\cap C(\left[s,T\right)\times\overline{\Omega})$ and solves
  \begin{equation*}
\left\{
\begin{array}{ll}
Lu=0 & \mbox{in $ \Omega\times\left(s,T\right)$}, \\
u=0 & \mbox{in $ \pa\Omega\times\left[s,T\right)$}, \\
u(\cdot,s)=\varphi&  \mbox{in $\overline{\Omega}$}.
\end{array} \right.
\end{equation*}

\end{enumerate}
\end{thm}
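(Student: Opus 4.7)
The plan is to build $G$ from the explicit prescription stated just above the theorem, namely $G(x,t;y,s) = p(x,t;y,s) - h_{(y,s)}(x,t)$, and then verify the properties (1)--(3). The first step is solvability of the auxiliary Dirichlet problem defining $h_{(y,s)}$: since $(y,s)$ is interior to $D_T$, the boundary datum $(x,t)\mapsto p(x,t;y,s)$ is smooth, hence continuous, on $\partial_p D_T$. Using that $\Omega$ is regular for the time-independent sub-Laplacian together with the results of \cite{Bony} and \cite{BBLU2} (the parabolic analogue of the $d$-density cone criterion underlying Lemma \ref{Dirichlet0}), the cylinder $D_T$ is $L$-regular, so $h_{(y,s)}$ exists as a classical solution in $\Gamma^2(D_T)\cap C(D_T\cup\partial_p D_T)$ by Lemma \ref{gendir}.

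For property (1), smoothness of $G(\cdot;\zeta)$ on $D_T\setminus\{\zeta\}$ is inherited from that of $p(\cdot;\zeta)$ together with hypoellipticity of $L$ applied to $h_\zeta$, while $LG(\cdot;\zeta)=0$ off $\zeta$ and the vanishing on $\partial\Omega\times(0,T)$ are immediate from the construction. Joint continuity in $(z,\zeta)$ follows from a stability argument: if $\zeta'\to\zeta$ in $D_T$ then $p(\cdot;\zeta')\to p(\cdot;\zeta)$ uniformly on $\partial_p D_T$, and the maximum principle of Theorem \ref{strongmax} propagates this to $h_{\zeta'}\to h_\zeta$ locally uniformly, hence similarly for $G$.

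For property (2), the Gaussian lower bound in Theorem \ref{GaussB} gives $p\ge 0$ everywhere, so by the minimum principle applied to $-h_\zeta$ we obtain $h_\zeta\ge 0$, i.e.\ $G\le p$. For $G\ge 0$ one applies the minimum principle to $G(\cdot;\zeta)$ on $D_T\setminus\{\zeta\}$: this function is $L$-parabolic there, vanishes on $\partial\Omega\times(0,T)$, vanishes on $\Omega\times\{0\}$, and blows up to $+\infty$ at $\zeta$ (since $h_\zeta$ stays bounded near $\zeta$ while $p(\cdot;\zeta)\to+\infty$). The claim $G\equiv 0$ for $t<s$ follows because on $D_s=\Omega\times(0,s)$ both $p(\cdot;y,s)\equiv 0$ by the causality in Theorem \ref{GaussB} and the parabolic boundary values of $h_\zeta$ restricted to $D_s$ vanish; uniqueness then gives $h_\zeta\equiv 0$ on $D_s$.

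Property (3) is the main obstacle. Set $u(x,t)=\int_\Omega G(x,t;y,s)\varphi(y)\,dy$. Smoothness on $\Omega\times(s,T)$ follows by differentiation under the integral, using the locally uniform smoothness of $G$ away from the diagonal for $t>s$, while the lateral condition $u=0$ on $\partial\Omega\times[s,T)$ is immediate from $G(x,t;y,s)=0$ for $x\in\partial\Omega$ combined with continuity of $G$ up to the lateral boundary. The delicate step is attainment of the initial datum: I would split $u=U-V$ with $U(x,t)=\int_\Omega p(x,t;y,s)\varphi(y)\,dy$ and $V(x,t)=\int_\Omega h_{(y,s)}(x,t)\varphi(y)\,dy$. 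The Gaussian estimates of Theorem \ref{GaussB} together with continuity of $\varphi$ yield $U(x,t)\to\varphi(x)$ as $t\to s^+$, locally uniformly in $x\in\Omega$, by the standard approximation-of-identity argument. For $V(x,t)\to 0$, since $\varphi$ vanishes on $\partial\Omega$ one approximates it uniformly by $\varphi_\varepsilon\in C_c(\Omega)$; for such compactly supported integrands, uniform bounds on $h_{(y,s)}(x,t)$ for $y$ in a compact subset of $\Omega$, together with the identity $h_{(y,s)}(x,s)=0$ (from property (2) and continuity of $h_{(y,s)}$ up to $t=s$), let dominated convergence pass the limit inside the integral. Consistency at the corner $(x,s)$, $x\in\partial\Omega$, is guaranteed by the hypothesis $\varphi=0$ on $\partial\Omega$, which is precisely the compatibility needed for $u\in C([s,T)\times\overline{\Omega})$.
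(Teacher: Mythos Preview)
The paper does not give its own proof of this theorem; it simply records, in the sentence immediately preceding the statement, that ``the existence of the Green function follows from the results of \cite{Bony} and \cite{BBLU2},'' and then lists the properties. Your construction via $G=p-h_{(y,s)}$ is precisely the formula the paper writes down just before the theorem, so you are supplying the details that the paper delegates to those references.

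Your verification is essentially correct, but two steps deserve a word of caution. First, for $G\ge 0$ you invoke the minimum principle on $D_T\setminus\{\zeta\}$; since this set is not a cylinder, you should excise a small parabolic neighbourhood of $\zeta$ and use that $G$ is bounded below by a large constant on its boundary (from the Gaussian lower bound on $p$ and the boundedness of $h_\zeta$) before letting the neighbourhood shrink. Second, in the approximation step for $V(x,t)\to 0$ you pass from $\varphi$ to $\varphi_\varepsilon\in C_c(\Omega)$; for this you need the error $\int_\Omega h_{(y,s)}(x,t)\,|\varphi-\varphi_\varepsilon|(y)\,dy$ to be small uniformly in $t$, which follows once you note $0\le h_{(y,s)}\le p(\cdot;y,s)$ (already established in property~(2)) together with $\int_{\R^n}p(x,t;y,s)\,dy\le C$ from Theorem~\ref{GaussB}. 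With these two points made explicit, your argument is complete and matches the construction the paper has in mind.
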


We close this section with an important consequence of the results of Derridj \cite{De1} about smoothness in the Dirichlet problem at non-characteristic points. 



Given a closed set $S\subset \R^{n+1}$, we will denote by $C^{\infty}(S)$ the collection of the restrictions to $S$ of functions $\varphi\in C^{\infty}(U)$, where $U\subset\R^{n+1}$ is open with $S\subset U$.

\begin{thm}
\label{Derr}
Let $\Omega\subset\R^{n}$ be a $C^{\infty}$ domain which is regular for the sub-Laplacian $-\sum_{j=1}^m X_j^\star X_j$, and set $D_T=\Om \times \left(0,T\right)$. Consider the caloric function $H^{D_T}_{\vf}$, with $\vf\in C^{\infty}(\partial_pD_T)$. If $x_{0}\in\pa\Omega$ is a non-characteristic point, then there exists an open neighborhood $V$ of $x_{0}$ such that $H^{D_T}_{\vf}\in C^{\infty}((V\cap \overline \Om)\times (0,T))$.
\end{thm}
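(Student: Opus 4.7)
The plan is to reduce the claim to the known boundary hypoellipticity of Kohn--Nirenberg \cite{KN1} and Derridj \cite{De1} at non-characteristic points, applied to the parabolic operator $L$. First I fix $x_{0}\in\pa\Om\setminus\Sigma$. Because $\Sigma$ is closed in $\pa\Om$ and the angle function $W$ is continuous with $W(x_{0})>0$, there is an open neighborhood $V_{0}$ of $x_{0}$ in $\R^{n}$ such that $\overline{V_{0}}\cap\pa\Om$ is entirely non-characteristic, i.e.\ for some $j_{0}\in\{1,\ldots,m\}$ and $\delta>0$ one has $|\langle X_{j_{0}}(y),\nu(y)\rangle|\ge \delta$ for every $y\in \overline{V_{0}}\cap\pa\Om$. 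Thus $X_{j_{0}}$ is uniformly transverse to $\pa\Om$ on $V_{0}$. Since $\partial_{t}$ is tangent to the lateral boundary $\pa\Om\times(0,T)$, the full system $\{X_{1},\ldots,X_{m},\partial_{t}\}$ underlying $L$ is non-characteristic along $(V_{0}\cap\pa\Om)\times(0,T)$.

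Next I would reduce to the case of homogeneous lateral data. Extend $\vf\in C^{\infty}(\pa_{p}D_{T})$ to $\tilde\vf\in C^{\infty}(\overline{D_{T}})$, and set $w = H^{D_{T}}_{\vf}-\tilde\vf$. Then $w\in \Gamma^{2}(D_{T})\cap C(\overline{D_{T}})$ satisfies $Lw = -L\tilde\vf =: g$ in $D_{T}$ with $g\in C^{\infty}(\overline{D_{T}})$, together with $w = 0$ on $\pa_{p}D_{T}$. Hence it suffices to prove that any such $w$ is $C^{\infty}$ on $(V\cap\overline{\Om})\times(0,T)$ for some neighborhood $V\subset V_{0}$ of $x_{0}$, since then $H^{D_{T}}_{\vf} = w+\tilde\vf$ is $C^{\infty}$ on the same set.

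The latter assertion is exactly the kind of boundary hypoellipticity proved in \cite{De1} for H\"ormander sub-Laplacians and extended to subelliptic parabolic operators. The scheme is a tangential-mollifier and iterated-commutator argument based on the subelliptic energy estimate: tangential regularity of $w$ is bootstrapped to full regularity through the transversality of $X_{j_{0}}$, which supplies a uniformly elliptic direction normal to $\pa\Om$ throughout $V_{0}\cap\pa\Om$. The parabolic direction $\partial_{t}$ is tangent to the lateral face and its derivatives can be absorbed into the tangential mollification without loss; the first-order coefficients $a_{j} = -\operatorname{div}X_{j}$ are smooth and do not affect the principal part of $L$. Combined with H\"ormander's interior hypoellipticity from \cite{H}, this yields $w\in C^{\infty}((V\cap\overline{\Om})\times(0,T))$. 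The principal obstacle is that, unlike the classical parabolic case, no Schauder theory applies since $L$ is degenerate; the entire argument has to be carried out within the subelliptic framework of \cite{KN1,De1}, which requires careful bookkeeping of the commutators $[\partial_{t},X_{j}]$, $[X_{j},X_{k}]$ and of their interaction with the tangential mollifier near $\pa\Om$. Note that the conclusion is only asserted for $t\in(0,T)$, so that the corner sets $\pa\Om\times\{0\}$ and $\pa\Om\times\{T\}$, where initial/terminal compatibility would be delicate, do not enter the argument.
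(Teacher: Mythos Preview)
Your proposal is correct and aligns with the paper's approach: the paper does not give a proof at all, but simply states the theorem as ``an important consequence of the results of Derridj \cite{De1} about smoothness in the Dirichlet problem at non-characteristic points,'' having earlier also invoked \cite{KN1} and \cite{H}. Your sketch---reducing to homogeneous lateral data by subtracting a smooth extension, noting that the system $\{X_1,\ldots,X_m,\partial_t\}$ remains non-characteristic along $(V_0\cap\partial\Omega)\times(0,T)$ since $\partial_t$ is tangent to the lateral face, and then appealing to the boundary hypoellipticity machinery of Kohn--Nirenberg/Derridj---is precisely the mechanism behind that citation, spelled out in more detail than the paper itself provides.
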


\section{Boundary Estimates for the Green Function}\label{S:be}

In this section we establish some estimates for the Green function as well as for its horizontal gradient near the bounday assuming some conditions on the geometry of the domain $\Omega$.

\begin{definition}
\label{Def-X-ball}
A domain $\Omega\subset\R^{n}$ is said to possess an outer $X$\emph{-ball} tangent at $x_{0}\in\pa\Omega$ if
for some $r > 0$ there exists a X-ball $B(x_{1},r)$ such that:
\begin{equation}
\label{X-ball-Tan}
x_{0}\in \pa B(x_{1},r), \quad B(x_{1},r)\cap \Omega=\varnothing.
\end{equation}
We say that $\Omega$ possesses the \emph{uniform outer $X$-ball} if one can find $R_{0}>0$ such that for every $x_{0}\in\pa\Omega$, and any $0<r<R_{0}$, there exists a X-ball $B(x_{1},r)$ for which (\ref{X-ball-Tan}) holds.
\end{definition}

Notice that, when
$X=\left\{\frac{\pa}{\pa x_{1}},\ldots,\frac{\pa}{\pa x_{n}}\right\}$, then the distance $d(x, y)$ is just the ordinary Euclidean distance $\left|x-y\right|$. In
such case, Definition \ref{Def-X-ball} coincides with the notion introduced by Poincar\'e in his classical paper
\cite{P}. In this setting a \emph{X}-ball is just a standard Euclidean ball, and, hence, a domain possesses  the uniform outer \emph{X}-ball condition if and only if it is $C^{1,1}$ smooth. In the sub-Riemannian setting, the \emph{gauge balls} in Carnot groups of Heisenberg-type satisfy the uniform outer $X$-ball condition. In fact, as it has already been mentioned in the introduction, every bounded convex subset of such Carnot groups satisfy the  uniform outer $X$-ball condition, see Theorem \ref{ADP-dom} and the original sources \cite{CGN2} and \cite{LU1}. Moreover, it should be clear from (\ref{X-balls}) and Theorem \ref{Dirichlet0} that if $\Omega\subset\Rn$ satisfies the uniform  outer
\emph{X}-ball condition, then $D=\Omega\times (0,T)$, $T>0$,  is regular for the Dirichlet problem (\ref{dirp}).

From Theorem \ref{GaussB},  and the maximum principle, one easily sees that there exists $C>0$ such that for every $(x,t),(y,s)\in D_{T}$
\begin{equation}
\label{Gauss-Green}
0\leq G(x,t;y,s)\leq \frac{C}{\left\vert B\left( x,\sqrt{t-s }
\right) \right\vert }\exp \left( -\frac{Md\left( x,y \right) ^{2}}{t-s }
\right).
\end{equation}

\begin{lemma}
\label{linear-growth}
Let $D_T=\Omega\times (0,T)$, where $\Omega\subset \mathbb{R}^n$ is a connected, bounded open set and $T>0$. Suppose that for some $r > 0$, $\Omega$ has an outer X-ball $B(x_1, r)$ tangent at $x_0 \in \partial \Omega$. Let 
\[
C_r=B(x_1,2r)\times (s_0,T),
\] 
for some $0\leq s_0< T$. There exists $C > 0$, depending only on $\Omega$ and on $X$, such that if $\varphi \in C(\partial_p D_T)$, $\varphi\equiv 0$ on $\left(B(x_1,2r)\cap\partial\Omega\right)\times(s_0,T)$ and $H^{D_T}_{\varphi}\equiv 0$ on $\left(B(x_1,2r)\cap\Omega\right)\times\{s_0\}$, then for every $(x,t)\in \Omega\times (s_0,T)$, we have
\[
|H^{D_T}_{\varphi}(x,t)| \leq C \frac{d(x,x_0)}{r} \max_{\partial_p D_T} |\varphi|.
\]
\end{lemma}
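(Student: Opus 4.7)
The plan is to construct an explicit time-independent $L$-parabolic barrier out of the fundamental solution of the sub-Laplacian and compare it with $H^{D_T}_\varphi$ on the sub-cylinder
\[
R\;:=\;(B(x_1,2r)\cap\Omega)\times(s_0,T)
\]
via the maximum principle (Theorem \ref{strongmax}). Following the elliptic construction in \cite{CGN1}, \cite{LU1} and \cite{CGN4}, I would take
\[
\psi(x)\;:=\;1-E(x_1,r)\,\Gamma(x_1,x),
\]
where $\Gamma(x_1,\cdot)$ is the positive fundamental solution of $-\sum_{j=1}^m X_j^\star X_j$. Because $x_1\notin\Omega$, $\Gamma(x_1,\cdot)$ is smooth on $\overline{\Omega}$ and $\sum_j X_j^\star X_j\,\Gamma(x_1,\cdot)=0$ there, so $L\psi\equiv 0$ on $D_T$. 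Moreover the outer $X$-ball condition $B(x_1,r)\cap\Omega=\varnothing$ forces $E(x_1,r)\Gamma(x_1,\cdot)\le 1$ on $\overline{\Omega}$, giving $\psi\ge 0$ on $\overline{\Omega}$, while $\psi(x_0)=0$ from the defining identity $\Gamma(x_1,x_0)=1/E(x_1,r)$ on $\partial B(x_1,r)$.

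Two quantitative properties of $\psi$ drive the comparison. The first is a \emph{uniform lower bound} $\psi\ge \tfrac12$ on $\partial B(x_1,2r)$: from $\Lambda(x,r)=\sum_I|a_I(x)|r^{d(I)}$ with every $d(I)\ge n\ge 3$ one has $\Lambda(x_1,2r)\ge 2^n\,\Lambda(x_1,r)\ge 8\,\Lambda(x_1,r)$, hence
\[
\frac{E(x_1,r)}{E(x_1,2r)}\;=\;4\,\frac{\Lambda(x_1,r)}{\Lambda(x_1,2r)}\;\le\;\tfrac12,
\]
and on $\partial B(x_1,2r)$, where $\Gamma(x_1,\cdot)=1/E(x_1,2r)$, this gives $\psi\ge\tfrac12$. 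The second is a \emph{linear growth bound near $x_0$}: rewriting
\[
\psi(x)\;=\;E(x_1,r)\bigl[\Gamma(x_1,x_0)-\Gamma(x_1,x)\bigr]
\]
and integrating along a sub-unit curve of length $d(x,x_0)$ joining $x$ to $x_0$, the standard estimate $|X\Gamma(x_1,y)|\le C\,d(x_1,y)/|B_d(x_1,d(x_1,y))|$, combined with \eqref{doubling} and \eqref{X-balls}, yields $|X\Gamma(x_1,\cdot)|\le C\,r/\Lambda(x_1,r)$ in a fixed neighborhood of $x_0$; multiplying by $E(x_1,r)=\Lambda(x_1,r)/r^2$ produces the factor $1/r$ and gives $\psi(x)\le C\,d(x,x_0)/r$.

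Set $M:=\max_{\partial_p D_T}|\varphi|$ and $v(x,t):=2M\,\psi(x)$. Then $\pm H^{D_T}_\varphi-v$ is $L$-parabolic on $R$ and satisfies $\pm H^{D_T}_\varphi-v\le 0$ on the parabolic boundary $\partial_p R$: the hypotheses on $\varphi$ force $H^{D_T}_\varphi$ to vanish on $(B(x_1,2r)\cap\partial\Omega)\times(s_0,T)$ and on $(B(x_1,2r)\cap\Omega)\times\{s_0\}$, while $v\ge 0$; and on the remaining face $(\partial B(x_1,2r)\cap\overline{\Omega})\times(s_0,T)$ the global bound $|H^{D_T}_\varphi|\le M$ is dominated by $v\ge M$ thanks to the lower bound on $\psi$. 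The comparison form of Theorem \ref{strongmax} then yields $|H^{D_T}_\varphi(x,t)|\le 2M\,\psi(x)\le C M\,d(x,x_0)/r$ throughout $R$. For $(x,t)\in(\Omega\setminus B(x_1,2r))\times(s_0,T)$ the estimate is immediate from the trivial bound $|H^{D_T}_\varphi|\le M$ together with the fact that $d(x,x_0)/r$ is bounded below by a positive structural constant on $\Omega\setminus B(x_1,2r)$, via the ball equivalence \eqref{X-balls}.

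The principal technical step is the linear-growth estimate $\psi(x)\le C\,d(x,x_0)/r$: it hinges on the correct scaling of the horizontal gradient of the sub-Laplacian's fundamental solution at scale $d(x_1,\cdot)\sim r$, an ingredient imported from the sub-Riemannian potential theory developed in the references above. The remaining pieces---positivity of $\psi$, the lower bound on $\partial B(x_1,2r)$, and the parabolic comparison---are then routine once $\psi$ is in hand.
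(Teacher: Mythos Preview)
Your argument is correct and follows essentially the same route as the paper. The paper's barrier is
\[
v(x,t)=\frac{E(x_1,r)^{-1}-\Gamma(x_1,x)}{E(x_1,r)^{-1}-E(x_1,2r)^{-1}},
\]
which is just your $\psi$ divided by the constant $1-E(x_1,r)/E(x_1,2r)$; the paper normalizes so that $v\equiv 1$ on $\partial B(x_1,2r)$, whereas you instead prove the lower bound $\psi\ge\tfrac12$ there and absorb the factor $2$ into the final constant. The only other difference is that the paper obtains the linear growth estimate $v(x)\le C\,d(x,x_0)/r$ by citing Theorem~6.3 in \cite{CGN1}, while you sketch the underlying gradient-integration argument directly; your sketch is fine, with the understanding that for $d(x,x_0)\gtrsim r$ the estimate is trivial since $\psi\le 1$, so the sub-unit curve argument is only needed at small scales where the curve stays in the region $d(x_1,\cdot)\sim r$.
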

\begin{proof}
We can assume without loss of generality that $\max_{\partial_p D_T} |\varphi|=1$. Now, we consider  the following barrier
\begin{equation}
\label{subelliptic-barrier}
v(x,t)\overset{def}{=}\frac{E(x_{1},r)^{-1}-\Gamma(x_{1},x)}{E(x_{1},r)^{-1}-E(x_{1},2r)^{-1}}, 
\end{equation} 
with domain $\mathbb{R}^{n}\times\left(0,\infty\right)$. Clearly,  $Lv=0$ in $\left(\mathbb{R}^{n}\setminus\left\{x_{1}\right\}\right)\times\left(0,\infty\right)$ and $v\geq 0$ in $\left(\mathbb{R}^{n}\setminus B\left(x_{1},r\right)\right)\times\left(0,\infty\right)$. Furthermore, $v\equiv 1 $ on $\left(\partial B(x,2r)\cap \Omega\right)\times (0,T)$ and $v\geq 1$ in $\left(\mathbb{R}^{n}\setminus B\left(x_{1},2r\right)\right)\times\left(0,T\right)$. Therefore, the maximum principle implies
\begin{equation*}
|H^{D_T}_{\varphi}\left(x,t\right)|\leq v\left(x,t\right), \qquad \mbox{for every}\qquad \left(x,t\right)\in C_r\cap D.
\label{BoundH-f}
\end{equation*}
In order to finish the proof, we recall  from Theorem 6.3 in \cite{CGN1} that there exits $C=C(X,\Omega)>0$  such that
\begin{equation*}
v\left(x,t\right)\leq C\frac{d\left(x_{0},x\right)}{r}\qquad \mbox{for every} \qquad x\in\Omega.
\label{Boundf-d}
\end{equation*}
\end{proof}
\begin{thm}
Suppose that $\Omega$ satisfy the uniform outer X-ball condition with constant $R_{0}$. Then, there exist $C,M_1>0$, depending on $X$ and $\Omega$, such that the Green function for $L$ and $D_T = \Om \times (0,T)$ satisfies the estimate
\begin{align}
G\left(x,t;y,s\right)\leq \frac{Cd\left(y,\partial \Omega\right)}{\left(d\left(x,y\right)\wedge R_{0}\right)\left|B_{d}\left(x,\sqrt{t-s}\right)\right|}\exp\left(-\frac{M_1d\left(x,y\right)^{2}}{t-s}\right)\nonumber
\end{align}
with  $x\neq y\in \Omega$ and $0\leq s<t<T$.
\label{Bound-Est-G}
\end{thm}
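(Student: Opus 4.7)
The plan is to compare $G$ with the $X$-ball barrier of \eqref{subelliptic-barrier}, now interpreted in the adjoint variables: the function $w(y',\tau):=G(x,t;y',\tau)$ is $L^*$-caloric in $D_T\setminus\{(x,t)\}$, vanishes on $\partial\Omega\times(0,T)$, and tends to $0$ as $\tau\to t^-$ on $\Omega\setminus\{x\}$. First I would dispose of the ``interior'' case $\delta\ge\kappa(d\wedge R_0)$---where $\delta:=d(y,\partial\Omega)$, $d:=d(x,y)$, and $\kappa=\kappa(X,\Omega)>0$ is a small constant---by invoking the Gaussian bound \eqref{Gauss-Green} directly, with $M_1=M$ and $C$ absorbing $\kappa^{-1}$.

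For the boundary case $\delta<\kappa(d\wedge R_0)$, pick $y_0\in\partial\Omega$ with $d(y,y_0)=\delta$ and set $R:=(d\wedge R_0)/C_1$ with $C_1=C_1(X,\Omega)$ large enough that the uniform outer $X$-ball $B(y_1,R)$ tangent at $y_0$ satisfies $y\in B(y_1,2R)$, $x\notin \overline{B(y_1,2R)}$, and $d(x,y')\ge d/2$ for every $y'\in\overline{B(y_1,2R)}$; all three can be arranged using the equivalence \eqref{X-balls} for $C_1$ large depending on $a$. The barrier $v$ from \eqref{subelliptic-barrier} based at $y_1$ with radii $R,2R$ is time-independent, hence simultaneously $L$- and $L^*$-harmonic off $\{y_1\}$, is $\ge 0$ outside $B(y_1,R)$, equals $1$ on $\partial B(y_1,2R)$, and by Theorem 6.3 of \cite{CGN1} satisfies $v(y)\le C\delta/R$. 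Setting
\[
M_0:=\sup\bigl\{G(x,t;y',\tau):y'\in\partial B(y_1,2R)\cap\Omega,\ \tau\in(0,t)\bigr\},
\]
I apply the weak maximum principle for $L^*$ (the non-strong version of Theorem \ref{strongmax}(2), a direct consequence of \cite{Bony}) to $w-M_0 v$ on $U:=(B(y_1,2R)\cap\Omega)\times(0,t)$. The $L^*$-parabolic boundary splits into: $(B(y_1,2R)\cap\partial\Omega)\times(0,t)$, where $w=0$ and $M_0 v\ge 0$; $U\cap\{\tau=t\}$, where $w=0$ (since $x\notin\overline{B(y_1,2R)}$ isolates the singularity of $G$) and again $M_0 v\ge 0$; and $(\partial B(y_1,2R)\cap\Omega)\times(0,t)$, where $v=1$ and $w\le M_0$. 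Thus $w\le M_0 v$ throughout $U$, whence
\[
G(x,t;y,s)\le M_0 v(y)\le \frac{C\delta}{d\wedge R_0}\,M_0.
\]

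The final task is to bound $M_0$ by $C|B_d(x,\sqrt{t-s})|^{-1}\exp(-M_1 d^2/(t-s))$. By Theorem \ref{GaussB} combined with $d(x,y')\ge d/2$,
\[
M_0\le \sup_{u\in(0,t)} \frac{C}{|B_d(x,\sqrt u)|}\exp\!\Bigl(-\frac{M d^2}{16\,u}\Bigr),
\]
and doubling \eqref{doubling} converts this into the target form with $M_1\in(0,M/16)$. I expect the main technical obstacle to lie in this last step: the naive supremum over $(0,t)$ is too large when $t-s\ll d^2$, because the profile $u\mapsto|B_d(x,\sqrt u)|^{-1}\exp(-Md^2/(16u))$ peaks near $u\sim d^2$. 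The remedy I would pursue is to rerun the barrier argument on the shortened cylinder $(B(y_1,2R)\cap\Omega)\times(\max(0,s-(t-s)/2),(s+t)/2)$ so that $u=t-\tau$ varies in an interval comparable to $t-s$, and to bound separately the contribution of the top face at $\tau=(s+t)/2$---where the Gaussian value is already at scale $\sqrt{(t-s)/2}$---either by a second application of the barrier or by adding a small Gaussian correction to the comparison function. This delivers the claimed $M_1$ and finishes the proof.
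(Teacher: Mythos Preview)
Your overall strategy coincides with the paper's: apply the maximum principle for $L^*$ to $(z,\tau)\mapsto G(x,t;z,\tau)$ on a cylinder over the tangent outer $X$-ball at a nearest boundary point, compare with the time-independent barrier $v$ of \eqref{subelliptic-barrier}, and invoke (the $L^*$-version of) Lemma~\ref{linear-growth}. The one substantive difference is the time interval. Rather than $(0,t)$, the paper takes $C_r=B(x_1,(a+3)r)\times[s,t)$: since $G(x,t;z,\tau)\to 0$ as $\tau\nearrow t$ for $z\neq x$, the top face of the $L^*$-parabolic boundary contributes nothing, and on the lateral piece one has $u=t-\tau\in(0,t-s]$. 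The paper then normalizes by the target Gaussian, setting
\[
w(z,\tau)=C\,\Lambda\bigl(x,\sqrt{t-s}\bigr)\exp\!\Bigl(\frac{M\beta_2^2\,d(x,y)^2}{t-s}\Bigr)G(x,t;z,\tau),
\]
and argues from \eqref{Gauss-Green} and \eqref{zB} that $w\le 1$ on $(\partial B(x_1,(a+3)r)\cap\Omega)\times[s,t)$; the comparison $w\le H^{D^r}_\varphi$ with a cutoff $\varphi$ then yields $w(y,s)\le C\,d(y,\partial\Omega)/r$, and unwinding gives the stated estimate with $M_1=M\beta_2^2$. This is precisely the remedy you are reaching for: once the interval is $[s,t)$, your supremum $M_0$ is over $u\in(0,t-s]$, and (at least when $t-s\lesssim d(x,y)^2$, so that the profile $u\mapsto|B_d(x,\sqrt u)|^{-1}\exp(-M\rho^2/u)$ is monotone there) it is realized at $u=t-s$, delivering the correct Gaussian scale without any top-face contribution. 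Your shortened cylinder and ``Gaussian correction'' are thus an unnecessary detour---and the additive correction $C'p(x,\cdot,t-\cdot)$ would in fact not close the argument, since evaluated at $(y,s)$ it does not carry the crucial factor $d(y,\partial\Omega)/(d(x,y)\wedge R_0)$.
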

\begin{proof}
Consider $a > 1$ as in (\ref{X-balls}). Because of (\ref{Gauss-Green}) we may assume that 
\[
ad(y,\partial\Omega)<\frac{d(x,y)}{a(a+3)}\quad \text{and} \quad d\left(y,\partial\Omega\right)<R_{0}.
\] 
Choose 
\begin{equation}
\label{Choose-r}
r=\min \left(\frac{d\left(x,y\right)}{2a\left(a+3\right)},\frac{aR_{0}}{2}\right).
\end{equation}
Let $x_{0}\in \partial \Omega$ such that $d\left(x_{0},y\right)=d\left(y,\partial\Omega\right)$. Let $B\left(x_{1},r/a\right)$ be the outer $X$-ball tangent to the boundary of $\Omega$ at $x_{0}$. Since
\[
x_{0}\in \overline{B(x_{1},r/a)}\subset \overline{B_{d}(x_{1},r)},
\]
we have
\begin{align}
d\left(y,x_{1}\right)\leq d\left(y,x_{0}\right)+d\left(x_{0},x_{1}\right) <\frac{2r}{a} + r<\frac{a+3}{a}r.
\end{align}
Therefore, 
\begin{equation}
y\in B_{d}\left(x_{1},a^{-1}(a+3)r\right)\subseteq B\left(x_{1},(a+3)r\right) .
\end{equation}
On the other hand, the triangle inequality gives
\begin{align*}
d\left(x,x_{1}\right)\geq d\left(x,y\right)-d\left(y,x_{1}\right)&\geq d\left(x,y\right)-\frac{a+3}{a}r\\
& > d\left(x,y\right)\left(1-\frac{1}{2a^{2}}\right),
\end{align*}
and therefore we obtain
\begin{displaymath}
x\in \mathbb{R}^{n}\setminus B_{d}\left(x_{1},\left(1-\frac{1}{2a^{2}}\right)d\left(x,y\right)\right).
\end{displaymath}
By the equivalence between $X$-balls and Carnot-Carathe\'odory balls, see (\ref{X-balls}), we have
\begin{eqnarray*}
\mathbb{R}^{n}\setminus B_{d}\left(x_{1},\left(1-\frac{1}{2a^{2}}\right)d\left(x,y\right)\right)&\subset& \mathbb{R}^{n}\setminus B\left(x_{1},\frac{1}{a}\left(1-\frac{1}{2a^{2}}\right)d\left(x,y\right)\right)\\
&\subset& \mathbb{R}^{n}\setminus \overline{B\left(x_{1},\left(a+3\right)r\right)},
\end{eqnarray*}
given that $a>1$.
Consider the cylinder 
\begin{displaymath}
C_{r}=B\left(x_{1},(a+3)r\right)\times [s,t).
\end{displaymath}
For $z\in \overline{B}\left(x_{1},(a+3)r\right)$,
\begin{equation}
\label{zB}
d\left(x,z\right)\geq d\left(x,x_{1}\right)-d\left(z,x_{1}\right)\geq d\left(x,y\right)\left(1-\frac{1}{2a^{2}}-\frac{1}{2a}\right).
\end{equation}
Define 
\begin{equation*}
w\left(z,\tau\right)\doteq C\left|\Lambda\left(x,\sqrt{t-s}\right)\right|\exp\left(\frac{M\beta^{2}_{2}d\left(x,y\right)^{2}}{t-s}\right)G\left(x,t;z,\tau\right),
\end{equation*}
where $\beta_{2}=\left(1-\frac{1}{2a^{2}}-\frac{1}{2a}\right)$ and $C=C(X,\Omega)$ is a suitable positive constant that will be chosen in a moment. 
Consequently, since $x\in\mathbb{R}^n\setminus \overline{B(x_1,(a+3)r)}$, we have from (\ref{pol}), the Gaussian bounds in (\ref{Gauss-Green}) and the estimate (\ref{zB}) that we can find a positive constant    $C=C(X,\Omega)$ such that
\begin{displaymath}
w\left(z,\tau\right)\leq C\frac{\left|\Lambda\left(x,\sqrt{t-s}\right)\right|}{\left|B(x,\sqrt{t-\tau})\right|}\exp\left(Md\left(x,z\right)^{2}\left(\frac{1}{t-s}-\frac{1}{t-\tau}\right)\right)\leq 1, 
\end{displaymath}
with  $(z,\tau)\in \left(\partial B(x_1,(a+3)r)\cap \Omega\right)\times [s,t)$. Furthermore,  $w\equiv 0$ on $S_T\cap C_r$ and 
\[
\lim_{\tau\nearrow t}w(z,\tau)=0 \quad \text{with $z\in B(x_1,(a+3)r)\cap D_T$}.
\]
Notice that we also have $L^*w=0$. Let $D^r\doteq C_r\cap D_T$ and  consider the solution $H^{D^r}_{\varphi}$ of the adjoint heat equation $L^*$,  where $\varphi\in C(\partial^*_p D^r)$, $0\leq \varphi \leq 1$, is such that
\[
\varphi \equiv 0\quad \text{on $\left(\left(B(x_1,(1+a)r)\cap \partial\Omega\right)\times(s,t)\right)\cup\left(\left(B(x_1,(1+a)r)\cap \partial\Omega\right)\times\{t\}\right)$.}
\]
Moreover, $\varphi\equiv 1$ on 
\[
\left(\partial B(x_1,(a+3)r)\cap \Omega\right)\times(s,t).
\]
Observe that $B(x_1,(a+3)r)\cap\Omega$ satisfies the uniform $X-$ball condition at $x_0\in\Omega$.  By the $L^*-$version of Lemma \ref{linear-growth} we conclude that for $(z,\tau) \in D^r$,
\[
H^{D^r}_{\varphi}(z,\tau)\leq C\frac{d(z,\partial\Omega)}{r}.
\]
Since $w\leq H^{D^r}_{\varphi}$ in $D^r$, the result follows with $M_1=M\beta^2_2$.
\end{proof}

Recall that the main goal of this section is to estimate the horizontal gradient of the Green function up to the lateral boundary. In order to establish this estimate we state first the following theorem from \cite{DG1}. Before stating the result, we introduce some notation. For $(x,t)\in \mathbb{R}^n$, we consider the following parabolic cylinders
\[
Q_r(x,t)=B_d(x,r)\times (t,t+r^2)\quad Q^+_r(x,t)=B_d(x,r)\times (t+\frac{r^2}{4},t+\frac{r^2}{2}). 
\]
\begin{thm}
Let $D\subset\mathbb{R}^{n+1}$ be an open subset and suppose that $u\geq 0$ solves $L^{\ast}u=0$. There exists $R_{1}>0$ depending on D and X such that for every $(x_{0},t_0)\in D$ and $0<r\leq R_{1}$ for which $\overline{Q}_r(x_{0},t_0)\subset D$, one has for any $s,k\in \mathbb{N}$,
for some constant $C=C\left(D,X,s,k\right)>0$, 
\begin{displaymath}
\sup_{Q^{+}_r(x_{0},t_0)}\left|\frac{\partial^{k}}{\partial t^{k}}X_{j_{1}}X_{j_{2}}...X_{j_{s}} u\right|\leq \frac{C}{r^{s+2k}}u\left(x_{0},t_0\right).
\end{displaymath}
\label{Int Cauchy}
In the above estimate, for every i=1,\ldots,s, the index $j_{i}$ runs in the set $\left\{1,\ldots,m\right\}$. 
\end{thm}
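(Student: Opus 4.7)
The plan is to prove the estimate by combining two classical subelliptic--parabolic ingredients. First, a parabolic Harnack inequality for non-negative solutions of $L^{\ast}u=0$ converts the pointwise quantity $u(x_0,t_0)$ into a supremum bound on a sub-cylinder of $Q_r(x_0,t_0)$; second, an interior Cauchy-type estimate for horizontal and time derivatives of $L^{\ast}$-caloric functions converts this supremum bound into a pointwise bound on arbitrarily many derivatives, producing the expected power of $r$ in the denominator.

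For the first step I would invoke the parabolic Harnack inequality. Since $L^{\ast}$ is the time reversal of $L$, the Harnack inequality for non-negative $L$-caloric functions proved in \cite{KS} and \cite{BBLU2} translates, after the substitution $t\mapsto -t$, into the statement that there exist constants $R_{\ast},\theta,c>0$, depending on $D$ and $X$, such that whenever $0<r\le R_{\ast}$ one has $u(y,\tau)\le c\,u(x_0,t_0)$ for every $(y,\tau)$ in the later sub-cylinder $B_d(x_0,\theta r)\times[t_0+\tfrac{r^2}{8},t_0+\tfrac{3r^2}{4}]$. A standard chaining argument in the Carnot--Carath\'eodory metric, which uses the doubling property~(\ref{doubling}) together with $\overline{Q}_r(x_0,t_0)\subset D$, allows one to enlarge the ball in the conclusion to $B_d(x_0,3r/4)$, giving
\[
\sup_{Q_r^{+}(x_0,t_0)}u\ \le\ C(X,D)\,u(x_0,t_0).
\]

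For the second step I would apply an interior Cauchy estimate for (not necessarily non-negative) $L^{\ast}$-caloric functions: if $L^{\ast}v=0$ on an open set $U$ and the cylinder $B_d(z,\rho)\times[\tau-\rho^2,\tau+\rho^2]$ lies in $U$ with $\rho\le R_{\ast}$, then
\[
|\partial_t^{\,k}X_{j_1}\cdots X_{j_s}v(z,\tau)|\ \le\ \frac{C(X,s,k)}{\rho^{s+2k}}\sup_{B_d(z,\rho)\times[\tau-\rho^2,\tau+\rho^2]}|v|.
\]
This estimate is obtained either via the Rothschild--Stein lifting theorem, transferring $L^{\ast}$ to a free nilpotent group on which it becomes locally invariant under the intrinsic parabolic dilations, or by means of an $L^{\ast}$-cutoff: choose $\eta$ equal to $1$ on a concentric sub-cylinder of $U$, write $w:=\eta v$, note that $L^{\ast}w$ is supported where $X_j\eta\ne 0$, represent $w$ by convolution with the fundamental solution of $L^{\ast}$, and differentiate under the integral using the Gaussian derivative bounds of Theorem~\ref{GaussB}. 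Applying this estimate at each $(z,\tau)\in Q_r^{+}(x_0,t_0)$ with $\rho$ a fixed small multiple of $r$ (chosen so that the surrounding cylinder lies in the region where the Harnack bound holds), and combining with the estimate of the first step, yields the assertion for $r$ below a threshold $R_1$ determined by $R_{\ast}$ and the constant $a$ of~(\ref{X-balls}).

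The principal obstacle is that, outside of Carnot groups, the operator $L^{\ast}$ is not invariant under any natural global family of dilations, so the usual rescaling proof of Cauchy estimates does not apply verbatim. This is precisely what forces the threshold $R_1$ to be finite and localises the estimate: one must either pass through the Rothschild--Stein freezing-and-approximation machinery, or use directly the local character of the Gaussian kernel bounds of Theorem~\ref{GaussB}, which are valid only for a bounded regime of $t-\tau$. The Harnack step, by contrast, is a direct consequence of results already available in the literature, and the main computational effort is concentrated in passing from the kernel estimates to uniform constants depending only on $s,k,X$ and $D$.
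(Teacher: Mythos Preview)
The paper does not supply a proof of this theorem: it is simply quoted from \cite{DG1} (Danielli--Garofalo, ``Interior Cauchy--Schauder estimates for the heat flow in Carnot--Carath\'eodory spaces'') as a black box and then used in the proof of Theorem~\ref{HorBound-G}. So there is no in-paper argument to compare your proposal against.

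That said, your two-step strategy is precisely the standard one, and is essentially what \cite{DG1} carries out: the parabolic Harnack inequality for $L^{\ast}$ (obtained from that for $L$ by time reversal, as you indicate) upgrades the single value $u(x_0,t_0)$ to a supremum bound on a later sub-cylinder, and then an interior Cauchy estimate---derived either via Rothschild--Stein lifting or, more directly, from a cutoff representation through the fundamental solution together with the Gaussian derivative bounds of Theorem~\ref{GaussB}---converts this into control of $\partial_t^k X_{j_1}\cdots X_{j_s}u$ with the correct homogeneity $r^{-(s+2k)}$. Your diagnosis of the chief obstacle (no global dilation outside Carnot groups, hence the need for a local threshold $R_1$) is also on point.

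One caution about the geometry. As stated, $Q_r^{+}(x_0,t_0)=B_d(x_0,r)\times(t_0+r^2/4,t_0+r^2/2)$ carries the \emph{full} spatial ball, while the hypothesis gives only $\overline{Q}_r(x_0,t_0)\subset D$ with no spatial buffer. Your chaining argument enlarges the Harnack conclusion from $B_d(x_0,\theta r)$ to $B_d(x_0,3r/4)$ without difficulty, but pushing to all of $B_d(x_0,r)$ with a constant independent of the point is delicate: a chain reaching points near $\partial B_d(x_0,r)$ inside $Q_r$ would need arbitrarily many links. In the present paper this causes no trouble, since in the only application (proof of Theorem~\ref{HorBound-G}) the estimate is invoked solely at the spatial centre $(y,s)\in Q^{+}_{r/2}(y,s-r^2/16)$; but if you want the statement literally on all of $Q_r^{+}$ you should either shrink its spatial factor or appeal to a representation formula valid up to the lateral boundary of $Q_r$.
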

 
\noindent We can now prove our main estimate. 

\begin{thm}
\label{HorBound-G}
Asume the uniform $X$-ball condition for $\Omega$. There exist constants $C,M_1>0$, depending only on $X$ and $\Omega$, such that
\begin{eqnarray*}
\left|XG\left(x,t;y,s\right)\right|& \leq& \frac{C}{\left(d\left(x,y\right) \wedge R_{0}\right)}\times\frac{d(y,\partial\Omega)}{d(y,\partial\Omega)\wedge\sqrt{t-s}\wedge R_1}\\
&&\times \frac{1}{\left|B_{d}\left(x,\sqrt{t-s}\right)\right|}\exp\left(-\frac{M_1d\left(x,y\right)^{2}}{t-s}\right),\nonumber
\end{eqnarray*}
where $x\neq y\in \Omega$ and $0<s<t<T$. Here, $R_0$ is the constant in Definition \ref{Def-X-ball} and $R_1$ is the constant in Theorem \ref{Int Cauchy}.
\end{thm}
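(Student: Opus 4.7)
The plan is to apply the interior Schauder-type estimate of Theorem~\ref{Int Cauchy} to the function $(y,s)\mapsto G(x,t;y,s)$, which is nonnegative and satisfies $L^{\ast}u=0$ on $D_T\setminus\{(x,t)\}$, and then to insert the pointwise upper bound for $G$ provided by Theorem~\ref{Bound-Est-G}. The Schauder step converts a size estimate for $G$ into a size estimate for $XG$; the reciprocal factor $1/r$ will emerge from the radius $r$ of an interior parabolic cylinder, and the denominator $d(y,\pa\Omega)\wedge\sqrt{t-s}\wedge R_1$ appearing in the statement will arise by balancing the three length scales that constrain that radius.

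Concretely, fix $(x,t)$ and $(y,s)$ with $x\neq y\in\Omega$ and $0<s<t<T$. I would choose
\[
r\ :=\ c\,\min\bigl(R_1,\ d(y,\pa\Omega),\ \sqrt{t-s}\,\bigr)
\]
for a small fixed $c\in(0,1)$, and take $(y_0,s_0):=(y,\,s-r^2/3)$, so that $(y,s)$ lies in the interior of the cylinder $Q^+_r(y_0,s_0)$. The choice ensures $B_d(y,r)\subset\Omega$ (because $r<d(y,\pa\Omega)$) and $s_0+r^2=s+2r^2/3<t$ (because $r^2\leq c^2(t-s)$), so $\overline{Q}_r(y_0,s_0)\subset D_T\setminus\{(x,t)\}$. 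Applying Theorem~\ref{Int Cauchy} with one horizontal derivative and no time derivative then yields
\[
|X_y G(x,t;y,s)|\ \leq\ \frac{C}{r}\,G\bigl(x,t;y,s-\tfrac{r^2}{3}\bigr).
\]
Inserting Theorem~\ref{Bound-Est-G}, using $|B_d(x,\sqrt{t-s+r^2/3}\,)|\geq|B_d(x,\sqrt{t-s}\,)|$ trivially, and the elementary estimate
\[
\exp\!\Bigl(-\tfrac{M_1\,d(x,y)^2}{t-s+r^2/3}\Bigr)\ \leq\ \exp\!\Bigl(-\tfrac{M_1'\,d(x,y)^2}{t-s}\Bigr),
\qquad M_1':=\tfrac{M_1}{1+c^2/3}>0,
\]
which holds because $r^2\leq c^2(t-s)$, I would obtain
\[
|X_y G(x,t;y,s)|\ \leq\ \frac{C\,d(y,\pa\Omega)}{r\,(d(x,y)\wedge R_0)\,|B_d(x,\sqrt{t-s}\,)|}\,\exp\!\Bigl(-\tfrac{M_1'\,d(x,y)^2}{t-s}\Bigr).
\]
Since $r\asymp R_1\wedge d(y,\pa\Omega)\wedge\sqrt{t-s}$ by construction, the quotient $d(y,\pa\Omega)/r$ coincides, up to a multiplicative constant, with the claimed ratio $d(y,\pa\Omega)/\bigl(d(y,\pa\Omega)\wedge\sqrt{t-s}\wedge R_1\bigr)$, yielding the theorem.

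The main technical obstacle is the correct placement of the interior cylinder: it must simultaneously avoid the singularity $(x,t)$, stay inside $D_T$, and still provide a radius of the correct order in each of the three competing length scales. The one case not immediately covered by the choice above is $s\leq r^2/3$, when the base time $s-r^2/3$ would drop below $0$; I expect to handle it by replacing $(y_0,s_0)$ with a nearby point having $s_0>0$ and iterating Theorem~\ref{Int Cauchy} along a short parabolic chain, with the additional constants absorbed into $C$ and $M_1$.
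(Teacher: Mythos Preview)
Your approach is essentially identical to the paper's: apply Theorem~\ref{Int Cauchy} to the nonnegative $L^*$-solution $(y,s)\mapsto G(x,t;y,s)$ on an interior parabolic cylinder with radius $r$ comparable to $\min(R_1,d(y,\partial\Omega),\sqrt{t-s})$, and then feed in Theorem~\ref{Bound-Est-G}, absorbing the time shift via $r^2\lesssim t-s$. The paper's specific choice is $r=\min\bigl(2\sqrt{(t-s)/15},\,d(y,\partial\Omega)/2,\,R_1/2\bigr)$ with base point $(y,s-r^2/16)$; it likewise does not separately treat the case of small $s$, so your caveat there is apt but not a departure from the paper's argument.
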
 

\begin{proof}
Notice that the Green function, $G\left(x,t;y,s\right)$, is well defined for  $0 <s<t<T$. Let $0<r<R_1$ be as in Theorem \ref{Int Cauchy} such that $\left(x,t\right)\notin \overline{Q}_{r}\left(y,s-r^2/16\right)\subset D_T$. Notice that 
\[
\left(y,s\right)\in\overline{Q}^{+}_{r/2}\left(y,s-r^2/16\right).
\]
If we apply Theorem \ref{Int Cauchy} to the function $G(x,t;\cdot,\cdot)$ at the point $(y,s-r^2/16)$ we obtain
\begin{displaymath}
\left|XG\left(x,t;y,s\right)\right|\leq \frac{C}{r}G\left(x,t;y,s-r^{2}/16\right).
\end{displaymath}
Choose 
\begin{displaymath}
r=\min\left(2\sqrt{\frac{t-s}{15}},\frac{d\left(y,\partial\Omega\right)}{2},\frac{R_1}{2}\right),
\end{displaymath}
and then invoke  Theorem \ref{Bound-Est-G} to obtain
\begin{eqnarray*}
\left|XG\left(x,t;y,s\right)\right|& \leq &\frac{Cd\left(y,\partial \Omega\right)}{r\left(d\left(x,y\right)\wedge R_{0}\right)} \times \\
&& \frac{1}{\left|B_{d}\left(x,\sqrt{t-s+r^{2}/16}\right)\right|}\exp\left(-\frac{Md\left(x,y\right)^{2}}{t-s+r^{2}/16}\right).
\end{eqnarray*}
Observe that $t-s\leq t-s+r^{2}/16\leq 2\left(t-s\right)$, therefore
\begin{align*}
\left|XG\left(x,t;y,s\right)\right|& \leq \frac{Cd\left(y,\partial \Omega\right)}{r\left(d\left(x,y\right)\wedge R_{0}\right)\left|B_{d}\left(x,\sqrt{t-s}\right)\right|}\nonumber\exp\left(-\frac{M_1d\left(x,y\right)^{2}}{t-s}\right)\nonumber
\end{align*}
From this we easily obtain the desired conclusion in Theorem \ref{HorBound-G}, with $M_1=2M$.
\end{proof}

\begin{corollary}
\label{HeatBall-N-Sigma}
Let $D_T=\Omega\times\left(0,T\right)$, where $\Omega\subset\R^{n}$ is a bounded open set. If $\Om$ satisfies the outer $X$-ball condition in $V$, where $V$ is a neighborhood  of $\Sigma$, then for any $(x_{0},t_{0})\in D_T$ and every open neighborhood $U$ of $\pa\Omega$ such that $x_{0}\notin \overline{U}$, one has 
\[
\left\|G\left(x_{0},t_{0};\cdot,\cdot\right)\right\|_{L^{\infty}(U\times (0,t_{0}))} + \left\|XG\left(x_{0},t_{0};\cdot,\cdot\right)\right\|_{L^{\infty}(U\times (0,t_{0}))} \leq C(x_{0},t_{0},\Omega,V,U,X).
\]
\end{corollary}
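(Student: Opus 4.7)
The plan is to decompose $U$ into three regions with different geometric behavior and to handle each with a different tool: interior estimates away from $\partial\Omega$, the local version of Theorem \ref{HorBound-G} near the characteristic set $\Sigma$, and the Derridj-type boundary regularity of Theorem \ref{Derr} near the non-characteristic portion of $\partial\Omega$. Since $\Sigma$ is compact and $\Sigma\subset V$, I pick open sets $W'\Subset W\Subset V$ with $\Sigma\subset W'$, and choose $\eta>0$ so small that every $y\in\overline\Omega$ with $d(y,W')<\eta$ has its nearest point of $\partial\Omega$ in $W\subset V$. Then
\[
U\cap\overline\Omega \;\subset\; U_{\rm ch}\cup U_{\rm nch}\cup U_{\rm int},
\]
where $U_{\rm ch}=\{y\in U:d(y,W')<\eta\}$, $U_{\rm nch}\subset U$ is a relatively compact open neighborhood of $\partial\Omega\setminus W'$ avoiding $\Sigma$, and $U_{\rm int}\subset U$ satisfies $d(U_{\rm int},\partial\Omega)>0$.

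On $U_{\rm int}\times(0,t_0)$, the point $y$ is uniformly separated from both $\partial\Omega$ and $x_0$ (since $x_0\notin\overline U$), so the bound $0\leq G\leq p$ from Theorem \ref{Green-th}(2) together with Theorem \ref{GaussB} yields an $L^\infty$ bound on $G$. Applying Theorem \ref{Int Cauchy} to the backward-caloric function $(y,s)\mapsto G(x_0,t_0;y,s)$, which satisfies $L^*G=0$ on $D_{t_0}\setminus\{(x_0,t_0)\}$, on small parabolic cylinders contained in this set then bounds $|XG|$. On $U_{\rm ch}\times(0,t_0)$, the proofs of Lemma \ref{linear-growth}, Theorem \ref{Bound-Est-G} and Theorem \ref{HorBound-G} are purely local: the barrier \eqref{subelliptic-barrier} uses only the single outer $X$-ball tangent at the nearest boundary point, and the interior Cauchy-type step takes place on a parabolic cylinder of radius $\sim d(y,\partial\Omega)$ sitting inside $\Omega$. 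Since every $y\in U_{\rm ch}$ has its nearest boundary point in $V$, the argument of Theorem \ref{HorBound-G} applies verbatim. The resulting pointwise estimate is uniform on $U_{\rm ch}\times(0,t_0)$: as $s\nearrow t_0$, the factor $1/|B_d(x_0,\sqrt{t_0-s})|$ is dominated by $\exp(-M_1 d(x_0,y)^2/(t_0-s))$ because $d(x_0,y)\geq d(x_0,\overline U)>0$, and all remaining factors are bounded.

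On $U_{\rm nch}\times(0,t_0)$ every point of $\partial\Omega\cap\overline{U_{\rm nch}}$ is non-characteristic. I decompose
\[
G(x_0,t_0;y,s)=p(x_0,t_0;y,s)-h(y,s),
\]
where $h(y,s)$ solves the $L^*$-Dirichlet problem on $D_T$ with data $p(x_0,t_0;\cdot,\cdot)$ on $\partial_p^* D_T$. Because $(x_0,t_0)$ lies in the interior of $D_T$ and the heat kernel $p(x_0,t_0;y,s)$ vanishes to all orders as $s\uparrow t_0$ uniformly for $y$ at positive distance from $x_0$, this boundary datum is $C^\infty$ on $\partial_p^* D_T$. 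The backward analogue of Theorem \ref{Derr} (obtained by the time-reversal $s\mapsto T-s$) then asserts that $h$, and hence $G(x_0,t_0;\cdot,\cdot)$, is $C^\infty$ up to $\partial\Omega$ in a relatively open neighborhood of each point of $\overline{U_{\rm nch}}\cap\partial\Omega$. A finite cover together with compactness of $\overline{U_{\rm nch}}\cap\partial\Omega$ then produces the required bound on $|G|+|XG|$ over $U_{\rm nch}\times(0,t_0)$.

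The main obstacle is justifying that the proof of Theorem \ref{HorBound-G} genuinely localizes to use only a single outer $X$-ball at the nearest boundary point, so that the global uniform $X$-ball hypothesis can be replaced by the outer $X$-ball condition in a neighborhood of $\Sigma$. A careful reading of the barrier argument in Lemma \ref{linear-growth} and of the Cauchy-type step in Theorem \ref{Bound-Est-G} confirms that only this single tangent $X$-ball enters. The constants that result depend on $V$ (through $W'$, $W$ and $\eta$), on the separation $d(x_0,\overline U)>0$, and on $t_0$ (through the Gaussian factor), in agreement with the dependence allowed in the statement.
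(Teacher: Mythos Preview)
The paper states this corollary without proof. Your argument is correct and supplies precisely the reasoning the paper has in mind: the decisive observation is that the barrier in Lemma \ref{linear-growth} and the proofs of Theorems \ref{Bound-Est-G} and \ref{HorBound-G} use only the single outer $X$-ball tangent at the nearest boundary point, so those estimates localize to the neighborhood $V$ of $\Sigma$, while on the non-characteristic portion of $\partial\Omega$ one falls back on the $L^*$-analogue of Theorem \ref{Derr} (this is also where the $C^\infty$ hypothesis on $\Omega$, implicit wherever the corollary is applied, e.g.\ in Proposition \ref{PandK=1} and Theorem \ref{H-P-K}, actually enters).

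One minor technical point: your compactness argument on $U_{\rm nch}$ controls the $y$-variable but not directly the time endpoints. The limit $s\nearrow t_0$ is harmless by the same Gaussian-decay reasoning you already give for $U_{\rm ch}$. For $s\searrow 0$, note that the Green function of a cylinder depends only on $\Omega$ and on $t-s$, not on the choice of initial time (the bottom datum in the defining Dirichlet problem for $h_{(y,s)}$ is identically zero whenever the initial time is $\leq s$); hence one may enlarge the time interval to $(-1,T)$, apply Theorem \ref{Derr} there, and read off uniform bounds on the compact set $\overline{U_{\rm nch}}\times[0,t_0-\epsilon]$.
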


\section{Representation formula for caloric functions}

In this section we establish a representation formula for smooth domains that satisfy the outer $X$-ball condition. Consider a cylinder $D_T=\Omega\times(0,T)$, where $\Omega\subset\Rn$ is a bounded open set regular for the Dirichlet problem, and denote by $G(x,t;y,s)$ the Green function for $L$ and $D_T$.  Fix a point $\left(x,t\right)\in D_T$ and consider a $C^{\infty}$ domain $U\subset \overline{U}\subset\Omega$ containing $x$, and we let $0<s<t<T$. For any $u,v\in C^{\infty}(D_T)$ we obtain from the divergence theorem 
\begin{eqnarray*}
\int_{U\times\left(0,s\right)}{\left(vLu-uL^{*}v\right)dyds}&= & \sum^{m}_{j=1}{\int^{s}_{0}{\int_{\partial U}{\left[uX_{j}v-vX_{j}u\right]\left\langle X_{j},\nu_{y}\right\rangle d\sigma}}}\\
& &-\int_{U}{v\left(y,s\right)u\left(y,s\right)dy}\boldsymbol{+}\int_{U}{v\left(y,0\right)u\left(y,0\right)dy}\nonumber
\end{eqnarray*}
Recall that $G\left(x,t;y,s\right)=p\left(x,t;y,s\right)-h_{\left(y,s\right)}\left(x,t\right)$, where $h_{(y,s)}$ is the unique $L$-parabolic function with boundary values $p(\cdot,\cdot\ ;y,s)$. The function $G$ is continuous in any relatively compact subdomain of $\overline{D_T}\setminus \left\{\left(x,t\right)\right\}$.  If we let $v\left(y,s\right)=G\left(x,t;y,s\right)$ in the above equation, keeping in mind that $L^{*}G(x,t;\cdot,\cdot)=0$, we obtain for $0<s<t$
\begin{eqnarray*}
\int_{U\times\left(0,s\right)}{G(x,t;y,\tau)Lu(y,\tau)dyd\tau}&= & \sum^{m}_{j=1}{\int^{s}_{0}{\int_{\partial U}{\left[uX_{j}G-GX_{j}u\right]\left\langle X_{j},\nu_{y}\right\rangle d\sigma}}}\\
& &-\int_{U}{G\left(x,t;y,s\right)u\left(y,s\right)dy}\\ 
&&+\int_{U}{G\left(x,t;y,0\right)u\left(y,0\right)dy}.\nonumber
\end{eqnarray*}

\noindent By Theorem \ref{Green-th} we have
\[
u\left(x,t\right)=\lim_{s\nearrow t}\int_{U}{G\left(x,t;y,s\right)u\left(y,s\right)dy}.
\]
We thus conclude
\begin{eqnarray*}
u\left(x,t\right)&=&-\int_{U\times\left(0,t\right)}{G(x,t;y,s)Lu(y,s)dyds}+\int_{U}{G\left(x,t;y,0\right)u\left(y,0\right)dy}\\ 
&&+\sum^{m}_{j=1}{\int^{t}_{0}{\int_{\partial U}{\left[uX_{j}G-GX_{j}u\right]\left\langle X_{j},\nu_{y}\right\rangle d\sigma ds}}}.\nonumber
\end{eqnarray*}

\begin{definition}
\label{Def-W}
Given a bounded open set $\Omega\subset\overline{\Omega}\subset\R^{n}$ of class $C^{1}$, at every point $y\in\pa\Omega$ we let
\[
N^{X}\left(y\right)=\left(\left\langle \nu\left(y\right),X_{1}\left(y\right)\right\rangle,\ldots,\left\langle \nu\left(y\right),X_{m}\left(y\right)\right\rangle\right),
\]
where $\nu(y)$ is the outer unit normal to $\Omega$ in $y$. We also set 
\[
W\left(y\right)=\left|N^{X}\left(y\right)\right|=\sqrt{\sum^{m}_{j=1}\left\langle \nu\left(y\right),X_{j}\left(y\right)\right\rangle^{2}}.
\]
If $y\in\pa\Omega\setminus\Sigma$, we set
\[
\nu^{X}\left(y\right)=\frac{N^{X}\left(y\right)}{\left|N^{X}\left(y\right)\right|}.
\]
One has $\left|\nu^{X}\left(y\right)\right|=1$ for every $y\in\pa\Omega\setminus\Sigma$.
\end{definition}
We note explicitly that one has for the characteristic set $\Sigma$ of $\Omega$
\[
\Sigma=\left\{y\in\pa\Omega:W\left(y\right)=0\right\}.
\]
\begin{proposition}
\label{Rep-U}
Let $D_T=\Omega\times(0,T)$, where $\Omega\subset\R^{n}$ is a bounded open set,  with positive Green function $G$. Consider a $C^{1}$ domain $U\subset \overline{U}\subset\Omega$. For any $u\in C^{\infty}(D_T)$, and every $(x,t)\in U\times (0,T)$ one has
\begin{eqnarray*}
u\left(x,t\right)&=&-\int_{U\times\left(0,t\right)}{G\left(x,t;y,s\right)Lu\left(y,s\right)dyds}+\int_{U}{u\left(y,0\right)G\left(x,t;y,0\right)dy}\\ &-&\int^{t}_{0}{\int_{\partial U}{G\left(x,t;y,s\right)\left\langle Xu,N^{X}\left(y,s\right)\right\rangle d\sigma\left(y\right)ds}}\\
&+&\int^{t}_{0}{\int_{\partial U}{u\left(y,s\right)\left\langle XG\left(x,t;y,s\right),N^{X}\left(y,s\right)\right\rangle d\sigma\left(y\right)ds}}.
\end{eqnarray*}
If moreover $Lu=0$ in $D_T$, then
\begin{eqnarray*}
u\left(x,t\right)&=&-\int^{t}_{0}{\int_{\partial U}{G\left(x,t;y,s\right)\left\langle Xu,N^{X}\left(y\right)\right\rangle d\sigma\left(y\right)ds}}+\int_{U}{u\left(y,0\right)G\left(x,t;y,0\right)dy}\\
&+&\int^{t}_{0}{\int_{\partial U}{u\left(y,s\right)\left\langle XG\left(x,t;y,s\right),N^{X}\left(y\right)\right\rangle d\sigma\left(y\right)ds}}
\end{eqnarray*}
In particular, the latter equality gives for every $(x,t)\in D_T$ 
\begin{eqnarray*}
1&=&\int^{t}_{0}{\int_{\partial U}{\left\langle XG\left(x,t;y,s\right),N^{X}\left(y\right)\right\rangle d\sigma\left(y\right)ds}}\\
&+&\int_{U}{G\left(x,t;y,0\right)dy}
\end{eqnarray*}
\end{proposition}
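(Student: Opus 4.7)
The first formula is essentially contained in the computation carried out in the paragraphs immediately preceding the statement: applying the divergence theorem on $U\times(0,s)$ with $v(y,\tau)=G(x,t;y,\tau)$ and using $L^{*}G(x,t;\cdot,\cdot)=0$ gives, for every $0<s<t$,
\[
\int_{U\times(0,s)} G\,Lu\,dy\,d\tau = \sum_{j=1}^{m}\int_0^s\!\int_{\pa U} [uX_j G - GX_j u]\langle X_j,\nu_y\rangle\,d\sigma\,d\tau - \int_U G(x,t;y,s)u(y,s)\,dy + \int_U G(x,t;y,0)u(y,0)\,dy.
\]
The plan is then to let $s\nearrow t$. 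The left-hand side, the initial integral at $\tau=0$, and the boundary integrals converge by dominated convergence: the Gaussian estimate \eqref{Gauss-Green} provides an integrable majorant, and on $\pa U$ the Green function is smooth because $U\subset\subset\Om$ stays a positive distance from both $\pa \Om$ and the singular point $(x,t)$.

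The only delicate limit is $\lim_{s\nearrow t}\int_U G(x,t;y,s) u(y,s)\,dy = u(x,t)$. This is where the main content of the proof lies, and it is precisely the statement of Theorem \ref{Green-th}(3) applied to the subdomain $U$ in place of $\Om$ (equivalently, it follows by decomposing $G=p-h_{(y,s)}$, noting that $h$ is $L$-parabolic in $(y,s)$ and hence smooth up to $\tau=t$, while the fundamental solution $p(x,t;\cdot,s)$ converges to a Dirac mass at $x$ as $s\nearrow t$, with $x\in U$).

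Once this limit is in hand, one recognizes the boundary sum via Definition \ref{Def-W}, namely
\[
\sum_{j=1}^m [uX_j G - GX_j u]\langle X_j,\nu_y\rangle = u\,\langle XG,N^X(y)\rangle - G\,\langle Xu,N^X(y)\rangle,
\]
which converts the first formula into the form stated in the proposition. The second formula is then the immediate specialization $Lu=0$, which annihilates the volume term involving $Lu$. Finally, the third formula (the normalization identity) follows by taking $u\equiv 1$ in the second: since $Xu\equiv 0$ and $u(y,0)\equiv 1$, the representation collapses to
\[
1=\int_0^t\!\int_{\pa U}\langle XG(x,t;y,s),N^X(y)\rangle\,d\sigma(y)\,ds+\int_U G(x,t;y,0)\,dy,
\]
which is the claim. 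The only genuine obstacle is justifying the delta-like limit at $\tau=t$; the rest of the argument is integration by parts together with dominated convergence on sets that avoid the singularity of $G$.
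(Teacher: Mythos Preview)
Your proposal is correct and follows essentially the same approach as the paper: the proof is precisely the computation in the paragraphs preceding the proposition, namely the divergence-theorem identity on $U\times(0,s)$ with $v=G(x,t;\cdot,\cdot)$, the use of $L^*G=0$, and the passage to the limit $s\nearrow t$ via Theorem~\ref{Green-th}, followed by the specializations $Lu=0$ and $u\equiv 1$. Your additional remarks on dominated convergence and on the decomposition $G=p-h$ to justify the delta-like limit simply fill in routine details that the paper leaves implicit.
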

The following result due to Derridj, see Theorem 1 in \cite{De1}, will be important in the sequel.
\begin{thm}
\label{Sigma0}
Let $\Omega\subset\R^{n}$ be a $C^\infty$ domain. If $\Sigma$ denotes its characteristic set, then $\sigma(\Sigma)=0$. As a consequence, if we consider the cylinder $D_T = \Om \times (0,T)$, with lateral boundary $S_T = \pa \Om \times (0,T)$, then the set $\Sigma \times (0,T) \subset S_T$ has vanishing $d\sigma \times dt$ measure.
\end{thm}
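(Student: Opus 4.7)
My approach is a Lie-algebraic induction on bracket depth. Fix a smooth defining function $\rho$ for $\Omega$ with $|\nabla\rho|\equiv 1$ on $\partial\Omega$, and for each integer $k\ge 1$ define the stratum
\[
\Sigma_k = \{\,y\in\partial\Omega : (Y\rho)(y)=0 \text{ for every iterated commutator } Y \text{ in } X_1,\dots,X_m \text{ of length } \le k\,\}.
\]
Then $\Sigma=\Sigma_1\supset\Sigma_2\supset\cdots$, and the H\"ormander condition \eqref{frc} together with the compactness of $\partial\Omega$ yields a finite $N$ with $\Sigma_N=\varnothing$: at every $y\in\partial\Omega$ some iterated bracket is transverse to $\partial\Omega$, and a uniform upper bound on the required depth follows by continuity of the brackets and a standard open-cover argument. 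Hence it suffices to show $\sigma(\Sigma_k\setminus\Sigma_{k+1})=0$ for every $k$, for then $\sigma(\Sigma)=\sigma(\Sigma_N)=0$.

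The key ingredient is the following geometric lemma: if $g\in C^\infty(\overline\Omega)$ vanishes on a Borel set $E\subset\partial\Omega$, then the tangential gradient $\nabla^T g$ vanishes $\sigma$-almost everywhere on $E$. I would prove this via the implicit function theorem plus a countable cover: wherever $\nabla^T g\ne 0$, the zero set of $g|_{\partial\Omega}$ is locally an $(n-2)$-dimensional $C^\infty$ submanifold of $\partial\Omega$, hence $\sigma$-null, and $E\cap\{\nabla^T g\ne 0\}$ is covered by countably many such pieces.

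Granted the lemma, I would prove the stratum identity as follows. Applying it with $E=\Sigma_k$ to each of the finitely many commutators $Y$ of length $\le k$ produces a full-$\sigma$-measure subset $\Sigma_k^\star\subset\Sigma_k$ on which $\nabla^T(Y\rho)=0$ for every such $Y$. Fix $y_0\in\Sigma_k^\star$ and a bracket $Y=[X_i,Z]$ of length $k+1$, and expand
\[
(Y\rho)(y_0)=X_i(y_0)\cdot\nabla(Z\rho)(y_0)-Z(y_0)\cdot\nabla(X_i\rho)(y_0).
\]
Both $X_i(y_0)$ and $Z(y_0)$ lie in $T_{y_0}\partial\Omega$ since $y_0\in\Sigma\subset\Sigma_k$ forces $X_i\rho(y_0)=Z\rho(y_0)=0$. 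Thus each dot product only sees the tangential part of the corresponding gradient, and both such tangential parts vanish by choice of $\Sigma_k^\star$. Hence $\Sigma_k^\star\subset\Sigma_{k+1}$, which gives the stratum identity; iterating yields $\sigma(\Sigma)=0$, and the claim about $\Sigma\times(0,T)$ is then immediate from Fubini's theorem.

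The main obstacle is the geometric lemma: the proof must yield vanishing of the tangential gradient (not the full Euclidean gradient), because that is exactly what the induction can exploit, given that $X_i(y_0)$ and $Z(y_0)$ are known only to be tangent to $\partial\Omega$. A secondary technical point is that the induction discards a $\sigma$-null set at each of the finitely many stages $k=1,\dots,N-1$; since the number of commutators involved at each stage is finite and $N$ is finite, the total exceptional set remains $\sigma$-null.
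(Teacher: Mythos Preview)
Your argument is correct and self-contained; the paper itself gives no proof here but simply attributes the result to Derridj (Theorem~1 in \cite{De1}). What you have written is in fact essentially Derridj's original stratification argument: filter the characteristic set by the depth of commutators needed to achieve transversality, use the implicit function theorem to show that the tangential gradient of a smooth function vanishes $\sigma$-a.e.\ on its zero set, and then push the vanishing up one bracket level at a time. So your proposal supplies the proof the paper omits, and by the same route as the cited reference.

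Two minor corrections. First, in the induction step you write ``$y_0\in\Sigma\subset\Sigma_k$''; the inclusion goes the other way, $\Sigma_k\subset\Sigma_1=\Sigma$, and it is membership in $\Sigma_k$ (not just $\Sigma$) that guarantees $Z\rho(y_0)=0$ for the length-$k$ commutator $Z$. Second, a general commutator of length $k+1$ need not have the form $[X_i,Z]$ with $Z$ of length $k$; it may be $[Z_1,Z_2]$ with both factors of intermediate length. Your computation goes through unchanged in that case (both $Z_1(y_0)$ and $Z_2(y_0)$ are tangent since $y_0\in\Sigma_k$, and both $\nabla^T(Z_j\rho)(y_0)$ vanish since $y_0\in\Sigma_k^\star$), but you should either say so explicitly or remark that nested brackets $[X_{i_1},[X_{i_2},[\dots]]]$ span all iterated commutators by the Jacobi identity, so it suffices to treat those.
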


\begin{definition}
With the notation of Definition \ref{Def-W}, for every $((x,t),(y,s))\in D_T\times(\pa\Omega\setminus\Sigma\times \left(0,T\right) )$ we define
\begin{equation}
\label{Def-P}
P(x,t;y,s)=
\left\{
\begin{array}{ll}
\left\langle XG\left(x,t;y,s\right),N^{X}\left(y\right)\right\rangle,\ \  & 0<s<t,\\
0,\ \ \  & s\geq t. 
\end{array} \right.
\end{equation}
We also define 
\begin{equation}
\label{Def-K}
K(x,t;y,s)=\frac{P(x,t;y,s)}{W(y)}.
\end{equation}
We extend the definition of $P$ and $K$ to all $D_T\times S_T$ by letting $P(x,t;y,s) = K(x,t;y,s) = 0$ for any $(x,t) \in D_T$ and $(y,s)\in\Sigma\times(0,T)$.  According to Theorem \ref{Sigma0} the extended functions coincide $d\sigma\times dt$-a.e. with the kernels in (\ref{Def-P}), (\ref{Def-K}).
\end{definition}
\noindent For $(x,t;y,s)\in D\times(\pa\Omega\setminus\Sigma\times \left(0,T\right) )$, with $0<s<t$ we have
\begin{equation}
\label{Bound-P-K}
P(x,t;y,s)\leq W(y)\left|XG(x,t;y,s)\right|,\quad K(x,t;y,s)\leq \left|XG(x,t;y,s)\right|.
\end{equation}
We introduce a new measure on $\pa\Omega$ by letting
\begin{equation}
\label{New-meas}
d\sigma_{X}=Wd\sigma.
\end{equation}
Observe that since we are assuming that $\Omega\in C^{\infty}$ the density $W$ is smooth and bounded on $\pa\Omega$ and therefore (\ref{New-meas}) implies that $d\sigma_{X}<< d\sigma$. In view of this observation Theorem \ref{Sigma0} implies $\sigma_{X}(\Sigma)=0$.
\begin{proposition}
\label{PandK=1}
Assume that $D_T=\Omega\times (0,T)$, where $\Omega\subset\R^{n}$ is a bounded $C^{\infty}$ domain that satisfies the uniform outer $X$-ball condition in a neighborhood of its characteristic set $\Sigma$. For every $(x,t)\in D_T$ we have
\begin{eqnarray*}
1&=&\int^{t}_{0}{\int_{\partial \Omega}P(x,t;y,s)d\sigma\left(y\right)ds}+\int_{\Omega}{G\left(x,t;y,0\right)dy}\\
&=&\int^{t}_{0}{\int_{\partial \Omega}K(x,t;y,s)d\sigma_{X}\left(y\right)ds}+\int_{\Omega}{G\left(x,t;y,0\right)dy}
\end{eqnarray*} 
\end{proposition}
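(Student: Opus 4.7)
The plan is to apply Proposition \ref{Rep-U} to the function $u\equiv 1$, which is $L$-parabolic since $Lu=-\sum X_j^\star X_j u - \partial_t u=0$, and then to let the interior approximation $U$ exhaust $\Omega$. Concretely, I would pick a family of inner approximating domains $\Omega_\varepsilon=\{y\in\Omega:\operatorname{dist}(y,\pa\Omega)>\varepsilon\}$. Since $\Omega$ is $C^\infty$, for all $\varepsilon$ sufficiently small $\Omega_\varepsilon$ is a $C^\infty$ domain with $\overline{\Omega_\varepsilon}\subset\Omega$, $x\in\Omega_\varepsilon$, and the boundary $\pa\Omega_\varepsilon$ is parametrized diffeomorphically by $y\mapsto y_\varepsilon:=y-\varepsilon\nu(y)$ over $\pa\Omega$, with outer normal $\nu_\varepsilon(y_\varepsilon)\to\nu(y)$ and surface Jacobian tending to $1$ uniformly. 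Proposition \ref{Rep-U} applied to $u\equiv 1$ on $U=\Omega_\varepsilon$ yields
\[
1=\int_0^t\!\!\int_{\pa\Omega_\varepsilon}\langle XG(x,t;y,s),N^X_\varepsilon(y)\rangle\,d\sigma(y)\,ds+\int_{\Omega_\varepsilon}G(x,t;y,0)\,dy,
\]
where $N^X_\varepsilon$ denotes the horizontal normal to $\pa\Omega_\varepsilon$.

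The volume term passes to $\int_\Omega G(x,t;y,0)\,dy$ as $\varepsilon\to 0^+$ by monotone convergence, since $G\geq 0$ and $G(x,t;\cdot,0)\leq p(x,t;\cdot,0)\in L^1(\Omega)$ by the Gaussian bound (\ref{Gauss-Green}). For the boundary term I would argue by dominated convergence on $\pa\Omega\times (0,t)$ after pulling back via the parametrization $y\mapsto y_\varepsilon$. Pointwise convergence of the integrand to $\langle XG(x,t;y,s),N^X(y)\rangle=P(x,t;y,s)$ at every $(y,s)\in(\pa\Omega\setminus\Sigma)\times(0,t)$ follows from Theorem \ref{Derr} (Derridj), which guarantees $C^\infty$ extension of $G(x,t;\cdot,\cdot)$ up to the boundary near every non-characteristic point; at characteristic points the limit is zero because $N^X(y)=0$ there. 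By Theorem \ref{Sigma0} the set $\Sigma\times (0,t)$ is $d\sigma\times dt$-null, so pointwise convergence $d\sigma\times dt$-a.e.\ holds.

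The uniform integrable majorant is the key ingredient, and this is exactly where the hypothesis enters. Choose an open neighborhood $V$ of $\Sigma$ on which $\Omega$ satisfies the uniform outer $X$-ball condition, and a slightly larger neighborhood $U$ of $\pa\Omega$ with $x\notin\overline U$; Corollary \ref{HeatBall-N-Sigma} then gives a constant $C(x,t,\Omega,V,U,X)$ such that $|XG(x,t;z,s)|\leq C$ for all $(z,s)\in U\times(0,t)$. For $\varepsilon$ small $\pa\Omega_\varepsilon\subset U$, so
\[
\big|\langle XG(x,t;y_\varepsilon,s),N^X_\varepsilon(y_\varepsilon)\rangle\big|\leq C\max_{1\leq j\leq m}\sup_U|X_j|,
\]
providing the desired uniform bound. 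Dominated convergence then gives
\[
\int_0^t\!\!\int_{\pa\Omega_\varepsilon}\langle XG,N^X_\varepsilon\rangle\,d\sigma\,ds\longrightarrow\int_0^t\!\!\int_{\pa\Omega}P(x,t;y,s)\,d\sigma(y)\,ds,
\]
which establishes the first identity. The second identity is then immediate from $d\sigma_X=W\,d\sigma$ and $K=P/W$: off $\Sigma$ one has $P\,d\sigma=K\,d\sigma_X$, and the contribution of $\Sigma$ vanishes in both integrals (since $P=0$ on $\Sigma\times(0,T)$ and $\sigma_X(\Sigma)=0$).

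The main obstacle is establishing the equi-integrability of $XG$ on approximating boundaries near $\Sigma$, since in general $XG$ may be unbounded at characteristic points (as in Jerison's example); this is precisely resolved by the outer $X$-ball assumption via Corollary \ref{HeatBall-N-Sigma}. The rest of the argument is a routine pullback-and-dominated-convergence scheme.
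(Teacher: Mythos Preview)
Your proof is correct and follows essentially the same approach as the paper: apply Proposition \ref{Rep-U} with $u\equiv 1$ on an interior exhaustion, then pass to the limit using the uniform bound on $|XG|$ near $\pa\Omega$ from Corollary \ref{HeatBall-N-Sigma} and dominated convergence. The only cosmetic difference is the choice of exhaustion---the paper uses domains $\Omega_k$ whose boundaries split as $\Gamma^1_k\cup\Gamma^2_k$ with $\Gamma^1_k\subset\pa\Omega\setminus\Sigma$ and $\sigma(\Gamma^2_k)\to 0$, whereas you use parallel surfaces $\Omega_\varepsilon$ and a pullback---but the key ingredients and logic are identical.
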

\begin{proof}
Fix $(x,t)\in D_T$. In view of Theorem \ref{Sigma0} we can choose an exhaustion of $\Omega$ with a family of $C^{\infty}$ connected open sets $\Omega_{k}\subset\overline{\Omega}_{k}\subset\overline{\Omega}$, with $\Omega_{k}\nearrow\Omega$ as $k\rightarrow\infty$, such that $\pa\Omega_{k}=\Gamma^{1}_{k}\cup\Gamma^{2}_{k}$, with $\Gamma^{1}_{k}\subset\pa\Omega\setminus\Sigma$, $\Gamma^{1}_{k}\nearrow\pa\Omega$, $\sigma(\Gamma^{2}_{k})\rightarrow 0$. By Proposition \ref{Rep-U} we obtain for every $k\in\mathbb{N}$
\begin{eqnarray}
\label{Rep-Om-k}
1&=&\int^{t}_{0}{\int_{\partial \Omega_{k}}P(x,t;y,s)d\sigma\left(y\right)ds}+\int_{\Omega_{k}}{G\left(x,t;y,0\right)dy}\\
&=& \int^{t}_{0}{\int_{ \Gamma^{1}_{k}}P(x,t;y,s)d\sigma\left(y\right)ds}+\int^{t}_{0}{\int_{ \Gamma^{2}_{k}}P(x,t;y,s)d\sigma\left(y\right)ds}\nonumber\\
&+&\int_{\Omega_{k}}{G\left(x,t;y,0\right)dy}.\nonumber
\end{eqnarray}
We now pass to the limit as $k\rightarrow\infty$ in the above integrals. Using Corollary \ref{HeatBall-N-Sigma} and $\sigma(\Gamma^{2}_{k})\rightarrow 0$, we infer
\[
\lim_{k\rightarrow \infty}\int^{t}_{0}{\int_{ \Gamma^{2}_{k}}\left\langle XG\left(x,t;y,s\right),N^{X}\left(y\right)\right\rangle d\sigma\left(y\right)ds}=0.
\]
The fact that $\Gamma^{1}_{k}\nearrow\pa\Omega$ and once again Corollary \ref{HeatBall-N-Sigma}, allow to use the dominated convergence theorem, and obtain
\begin{eqnarray*}
\lim_{k\rightarrow \infty}\int^{t}_{0}{\int_{ \Gamma^{1}_{k}}\left\langle XG\left(x,t;y,s\right),N^{X}\left(y\right)\right\rangle d\sigma\left(y\right)ds}=\\
\int^{t}_{0}{\int_{ \pa\Omega}\left\langle XG\left(x,t;y,s\right),N^{X}\left(y\right)\right\rangle d\sigma\left(y\right)ds}.
\end{eqnarray*}
In conclusion, we have found
\begin{equation*}
1=\int^{t}_{0}{\int_{ \pa\Omega}\left\langle XG\left(x,t;y,s\right),N^{X}\left(y\right)\right\rangle d\sigma\left(y\right)ds}+\int_{\Omega}G\left(x,t;y,0\right)dy.
\end{equation*}
The first equality is thus established. In order to prove the second identity we rewrite (\ref{Rep-Om-k}) as follows
\begin{eqnarray*}
1&=& \int^{t}_{0}{\int_{ \Gamma^{1}_{k}}K(x,t;y,s)d\sigma_{X}\left(y\right)ds}+\int^{t}_{0}{\int_{ \Gamma^{2}_{k}}P(x,t;y,s)d\sigma\left(y\right)ds}\nonumber\\
&+&\int_{\Omega_{k}}{G\left(x,t;y,0\right)dy}.\nonumber
\end{eqnarray*}
Since we have observed $d\sigma_{X}<<d\sigma$, in view of the second estimate $K(x,t;y,s)\leq \left|XG(x,t;y,s)\right|$ in (\ref{Bound-P-K}), we can again use Corollary \ref{HeatBall-N-Sigma} and dominated convergence (with respect to $\sigma_{X}$) to conclude that 
\[
\lim_{k\rightarrow\infty}\int^{t}_{0}{\int_{ \Gamma^{1}_{k}}K(x,t;y,s)d\sigma_{X}\left(y\right)ds}=\int^{t}_{0}{\int_{ \pa\Omega}K(x,t;y,s)d\sigma_{X}\left(y\right)ds}.
\]
This completes the proof.
\end{proof}
\begin{thm}
\label{H-P-K}
Let $D_T=\Omega\times (0,T)$ satisfy the assumptions in Proposition \ref{PandK=1}. If $\varphi\in C^{\infty}_0(\partial_p D_T )$, with $\mathrm{supp}\ \varphi  \subset S_T$, assumes a single constant value in a neighborhood of $\Sigma\times (0,T)$, then 
\begin{equation}\label{gb}
||XH^{D_T}_{\varphi}||_{L^\infty(D_T)} < \infty.
\end{equation}
Furthermore, if for $\varphi\in C(\pa_p D_T)$, with $\mathrm{supp}\ \varphi\subset S_T$, the function $H^{D_T}_\varphi$ satisfies \eqref{gb}, then for every $(x,t)\in D_T$ one has
\begin{eqnarray*}
H^{D_T}_{\varphi}\left(x,t\right)&=&\int^{t}_{0}{\int_{\partial \Omega}P(x,t;y,s)\varphi\left(y,s\right)d\sigma\left(y\right)ds}\\
&=&\int^{t}_{0}{\int_{\partial \Omega}K(x,t;y,s)\varphi\left(y,s\right)d\sigma_{X}\left(y\right)ds}.
\end{eqnarray*} 
\end{thm}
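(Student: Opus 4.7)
Part 1, the gradient bound \eqref{gb}, is the heart of the argument; Part 2, the Poisson representation, is a routine exhaustion once \eqref{gb} is in hand. I will prove them in this order.

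For Part 1, let $c$ denote the constant value of $\varphi$ on a neighborhood of $\Sigma\times(0,T)$ inside $S_T$. The compact support of $\varphi$ in $S_T$ provides $\delta>0$ with $\mathrm{supp}\,\varphi\subset\partial\Omega\times[\delta,T-\delta]$, and Theorem \ref{strongmax} applied on $\Omega\times(0,\delta)$ forces $H^{D_T}_\varphi\equiv 0$ there, so $XH^{D_T}_\varphi=0$ for $t\leq\delta$. On the region $t\geq\delta$, interior Cauchy estimates (the $L$-analogue of Theorem \ref{Int Cauchy}) handle points away from $\partial\Omega$, and at any non-characteristic lateral point Derridj's Theorem \ref{Derr} gives $H^{D_T}_\varphi$ smooth up to $\partial\Omega$ and hence $XH^{D_T}_\varphi$ locally bounded. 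The crux is a neighborhood of a characteristic point $(x_0,t_0)\in\Sigma\times[\delta,T]$: using the outer $X$-ball condition I would pick the tangent ball $B(x_1,r)$ at $x_0$ with $r$ so small that $\varphi\equiv c$ on $(B(x_1,2r)\cap\partial\Omega)\times(t_0-r^2,t_0+r^2)$, set $u=H^{D_T}_\varphi-c$ (so $Lu=0$, $u\equiv 0$ on that lateral patch, and $u(\cdot,0)\equiv -c$ by the causality step), and compare $u$ against a super-solution built from the subelliptic barrier $v$ of \eqref{subelliptic-barrier}, augmented by an auxiliary $L$-caloric piece that carries the initial defect $|c|$ and vanishes on $S_T$. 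A natural candidate for the augmentation is $|c|P$, where $P(x,t)=\int_\Omega G(x,t;y,0)\,dy$ is the non-exit profile, which is $L$-caloric, equals $|c|$ at $t=0$ on $\Omega$, and vanishes on $S_T$. The resulting comparison should yield $|u(x,t)|\leq Cd(x,x_0)/r$ on an inner subcylinder, and an interior Cauchy estimate on subcylinders of radius $\rho\sim d(x,\partial\Omega)$ converts this into $|XH^{D_T}_\varphi(x,t)|\leq C/r$ uniformly near $(x_0,t_0)$, giving \eqref{gb}.

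For Part 2, assume $\|XH^{D_T}_\varphi\|_\infty<\infty$ and $\varphi\in C(\partial_p D_T)$ with support in $S_T$. I would copy the exhaustion from the proof of Proposition \ref{PandK=1}: a family of $C^\infty$ subdomains $\Omega_k\nearrow\Omega$ with $\partial\Omega_k=\Gamma^1_k\cup\Gamma^2_k$, $\Gamma^1_k\subset\partial\Omega\setminus\Sigma$, $\Gamma^1_k\nearrow\partial\Omega$, $\sigma(\Gamma^2_k)\to 0$. Apply Proposition \ref{Rep-U} with $U=\Omega_k$ to $u=H^{D_T}_\varphi$: the bulk term vanishes since $LH^{D_T}_\varphi=0$ and the initial term since $H^{D_T}_\varphi(\cdot,0)\equiv 0$, leaving
\[
H^{D_T}_\varphi(x,t) = -\int_0^t \int_{\partial\Omega_k} G\,\langle XH^{D_T}_\varphi,N^X\rangle\,d\sigma\,ds + \int_0^t \int_{\partial\Omega_k} H^{D_T}_\varphi\,\langle XG,N^X\rangle\,d\sigma\,ds.
\]
On $\Gamma^1_k\subset\partial\Omega$, $G$ vanishes and $H^{D_T}_\varphi=\varphi$, so the first integral contributes nothing there and the second produces $\int_0^t\int_{\Gamma^1_k}\varphi P\,d\sigma\,ds$. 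On $\Gamma^2_k$, Corollary \ref{HeatBall-N-Sigma} bounds $G$ and $XG$ by a constant depending only on $(x,t)$; combining this with $\|XH^{D_T}_\varphi\|_\infty<\infty$, $\|H^{D_T}_\varphi\|_\infty\leq\|\varphi\|_\infty$, and $\sigma(\Gamma^2_k)\to 0$ forces both integrals over $\Gamma^2_k$ to vanish as $k\to\infty$. Dominated convergence against the $L^1$ density $P$ guaranteed by Proposition \ref{PandK=1} then delivers the $P$-form of the representation; the $K$-form is obtained from $P\,d\sigma=K\,d\sigma_X$ together with $\sigma_X(\Sigma)=0$.

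The main obstacle is the barrier comparison in Part 1 when $c\neq 0$: Lemma \ref{linear-growth} in its stated form assumes zero initial data and so does not apply directly to $u=H^{D_T}_\varphi-c$, whose initial data is $-c$. Constructing an auxiliary $L$-caloric super-solution that simultaneously dominates this initial defect and preserves linear vanishing of the composite barrier at $x_0$ — so that the subsequent Cauchy step produces a bound uniform in $d(x,\partial\Omega)$ rather than one of the form $|c|/d(x,\partial\Omega)$ — is the non-routine point, and it is precisely here that the outer $X$-ball condition (rather than just the $NTA$ property) enters decisively. Once this refined barrier is in hand, the remainder of the argument assembles cleanly from Derridj's regularity, the original Lemma \ref{linear-growth} (applied to the part of the data that vanishes near $\Sigma$), and the interior Cauchy estimate.
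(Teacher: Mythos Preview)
Your Part 2 is essentially the paper's proof verbatim: exhaust $\Omega$ by the $\Omega_k$ of Proposition \ref{PandK=1}, apply Proposition \ref{Rep-U} to $H^{D_T}_\varphi$ on each $\Omega_k$, and pass to the limit using Corollary \ref{HeatBall-N-Sigma}, the hypothesis \eqref{gb}, and $\sigma(\Gamma^2_k)\to 0$.

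For Part 1 your outline is correct in spirit but more elaborate than the paper's. The paper simply invokes Proposition \ref{PandK=1} to reduce to the case $c=0$: since $H^{D_T}_c\equiv c$ has vanishing horizontal gradient, $XH^{D_T}_\varphi = XH^{D_T}_{\varphi-c}$, and one may therefore assume from the outset that $\varphi$ vanishes near $\Sigma\times(0,T)$. With that reduction in hand, the paper applies Lemma \ref{linear-growth} (with $s_0=0$) at the nearest boundary point $P\in\partial\Omega$ to $x$ to get $|H^{D_T}_\varphi(x,\cdot)| \le C\,d(x,\partial\Omega)/r$, and then the interior Cauchy estimate (Theorem \ref{Int Cauchy}) converts this into $|XH^{D_T}_\varphi|\le C/r$ near $\Sigma$, exactly as in your passage from linear growth to the gradient bound. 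No augmented barrier and no non-exit profile $P$ enters.

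More to the point, the ``main obstacle'' you flag is self-dissolving. You begin Part 1 by using the compact support of $\varphi$ in $S_T$ to produce $\delta>0$ with $\varphi\equiv 0$ on $\partial\Omega\times(0,\delta]$. But any neighborhood of $\Sigma\times(0,T)$ in $S_T$ necessarily meets the slab $\partial\Omega\times(0,\delta)$, and on that intersection $\varphi$ is simultaneously equal to $c$ and to $0$. Hence $c=0$, the initial defect $u(\cdot,0)\equiv -c$ that you worry about never arises, and Lemma \ref{linear-growth} applies directly with $s_0=0$. The $|c|P$ augmentation --- whose linear decay at $x_0$ you yourself were unable to establish --- is therefore superfluous.
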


\begin{proof}
We start with the proof of the regularity result. Let $\varphi$ be as in the first part of the statement. Consider $U\times (0,T)$, where $U$ is a neighborhood of $\Sigma$, in which the function $\varphi$ is constant and along which the domain $D_T$ satisfies the uniform outer $X-$ball condition. Let us assume that $U=\cup_{P\in\Sigma}B(P,\varepsilon)$, for some $\varepsilon=\varepsilon(U,X)>0$. If we denote by $R_{0}$ the constant involved in
the definition of $X$-ball (see Definition \ref{X-ball} above), then we can always select a smaller
constant so that $\varepsilon=2R_{0}$. In view of Proposition \ref{PandK=1} we
can assume without loss of generality that $\varphi$ vanishes in a neighborhood of $\Sigma\times (0,T)$ and $\underset{\pa_p D}{\max}\left|\varphi\right|=1$. We want to show that the horizontal gradient of $H^{D_T}_{\varphi}$ is in $L^{\infty}$ in such neighborhood. Fix $x_{0}\in\Sigma$, and $0<r<R_{0}$, where $R_{0}$ is as in Definition \ref{X-ball}. Suppose that $\varphi\equiv 0$ on $\left(B(x_0,\varepsilon)\cap\Omega\right)\times (0,T)$. Moreover, since $\varphi\equiv 0$ on $\left(B(x_0,\varepsilon)\cap \overline{\Omega}\right)\times \{0\}$, we  conclude from Lemma \ref{linear-growth} that there exist $C=C(X,\Omega)>0$ such that for every $(y,s)\in \left(B(x_0,\varepsilon)\cap\Omega\right)\times (0,T)$, 
\begin{equation*}
\left|H^{D_T}_{\varphi}\left(y,s\right)\right|\leq C\frac{d\left(x_{0},y\right)}{r}.
\end{equation*}
Suppose that $x\in B_{d}\left(x_{0},r/2\right)\cap\Omega$ and consider the ball $B_{d}(x,\tau)$, where $\tau=\min\left(\frac{d(x,\pa\Omega)}{4},\frac{\sqrt{T-s}}{4}\right)$, with $0<s<T$. Theorem \ref{Int Cauchy} implies
\begin{equation}
\label{XH-Bd}
\left|XH^{D_T}_{\varphi}(x,s) \right|\leq \frac{C}{d(x,\pa\Omega)}H^{D_T}_{\varphi}\big(x,s+\frac{\tau^{2}}{16}\big).
\end{equation}
Let $P\in\partial\Omega$ such that $d(x,P)=d(x,\pa\Omega)$. Observe that $d(P,\Sigma)\leq d(P,x_{0})+d(x,x_{0})\leq 2d(x,x_{0})\leq R_{0}=\varepsilon/2$. In particular, we can apply once more Lemma \ref{linear-growth}, and obtain 
\[
\left|H^{D}_{\varphi}\big(x,s+\frac{\tau^{2}}{16}\big)\right|\leq C\frac{d\left(x,P\right)}{r}= C\frac{d\left(x,\pa\Omega\right)}{r}.
\]
The latter inequality and (\ref{XH-Bd}) imply
\[
\left|XH^{D}_{\varphi}(x,s) \right|\leq \frac{C}{r}.
\]
This proves that $\left|XH^{D}_{\varphi} \right|\in L^{\infty}((B(x_{0},r/2)\cap\Omega) \times (0,T) )$, from which \eqref{gb} follows. To establish the second part of the theorem, we take a function $\varphi\in C(\pa_{p} D_T)$ with $\mathrm{supp}\ \varphi\subset S_T$ for which \eqref{gb} hold. We fix $(x,t)\in D_T$, and consider a sequence of $C^{\infty}$ domains $\Omega_{k}$ as in the proof of Proposition \ref{PandK=1}. Proposition \ref{Rep-U} gives
\begin{eqnarray*}
H^{D}_{\varphi}\left(x,t\right)&=&\int^{t}_{0}{\int_{\partial \Omega_{k}}{G\left(x,t;y,s\right)\left\langle \left(XH^{D}_{\varphi}\right)\left(y,s\right),N^{X}\left(y\right)\right\rangle d\sigma\left(y\right)ds}}\\
&-&\int^{t}_{0}{\int_{\partial \Omega_{k}}{H^{D}_{\varphi}\left(y,s\right)\left\langle XG\left(x,t;y,s\right),N^{X}\left(y\right)\right\rangle d\sigma\left(y\right)ds}}\\
&+&\int_{\Omega_{k}}{H^{D}_{\varphi}\left(y,0\right)G\left(x,t;y,0\right)dy}.
\end{eqnarray*}
At this point the conclusion follows along the lines of the proof of Proposition \ref{PandK=1}.
\end{proof}

\begin{proposition}
\label{Positive-P-K}
Let $\Omega$ be a $C^{\infty}$ domain and consider $D_T=\Omega\times(0,T)$. 
\begin{itemize}
\item[i)] If $\Omega$ satisfies the uniform outer $X$-ball condition in a neighborhood of $\Sigma$, then $P(x,t;y,s)\geq 0$ and $K(x,t;y,s)\geq 0$ for each $((x,t),(y,s))\in D_T\times S_T$; 
\item[ii)] If $\Omega$ satisfies the uniform outer $X$-ball condition, then there is a constant $C_{\Omega}>0$ such that for $((x,t),(y,s))\in D_T\times S_T$,
\end{itemize} 
\begin{align*}
0\leq P(x,t;y,s)\leq \frac{C_{\Omega}}{\left(d\left(x,y\right)^{2}\wedge \sqrt{t-s}\wedge R_{0}\right)}\frac{1}{\left|B_{d}\left(x,\sqrt{t-s}\right)\right|}\exp\left(-\frac{M_1d\left(x,y\right)^{2}}{\left(t-s\right)}\right),\\
0\leq K(x,t;y,s)\leq \frac{C_{\Omega}}{\left(d\left(x,y\right)^{2}\wedge \sqrt{t-s}\wedge R_{0}\right)}\frac{1}{\left|B_{d}\left(x,\sqrt{t-s}\right)\right|}\exp\left(-\frac{M_1d\left(x,y\right)^{2}}{\left(t-s\right)}\right).
\end{align*}
\end{proposition}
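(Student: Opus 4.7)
The plan is to treat the two parts separately. For part (i), I will reduce nonnegativity of $P$ (and hence of $K$) to that of the PWB solution $H^{D_T}_\varphi$ with $\varphi \geq 0$, via the representation formula of Theorem \ref{H-P-K} combined with the maximum principle. For part (ii), I will use Cauchy--Schwarz to dominate $|P|$ by $W|XG|$ and $|K|$ by $|XG|$, then invoke the gradient estimate of Theorem \ref{HorBound-G} at the boundary.

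For part (i), fix $(x_0,t_0) \in D_T$ and a non-characteristic $(y_0,s_0) \in (\partial\Omega \setminus \Sigma) \times (0, t_0)$. Since $\Sigma \times (0,T)$ is a compact subset of $S_T$ disjoint from $(y_0, s_0)$, I choose a nonnegative $\varphi \in C^\infty_0(\partial_p D_T)$ supported in a small parabolic neighborhood of $(y_0, s_0)$ contained in $(\partial\Omega \setminus \Sigma) \times (0, t_0)$, so that $\varphi$ vanishes identically in a neighborhood of $\Sigma \times (0, T)$ and of $\Omega \times \{0\}$. Theorem \ref{H-P-K} then yields
\[
H^{D_T}_\varphi(x_0, t_0) = \int_0^{t_0}\!\!\int_{\partial \Omega} P(x_0, t_0; y, s)\, \varphi(y,s)\, d\sigma(y)\, ds.
\]
By the maximum principle (Theorem \ref{strongmax}) the left-hand side is nonnegative, so the same holds for the integral. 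Since $(y,s) \mapsto P(x_0, t_0; y, s)$ is continuous on $(\partial \Omega \setminus \Sigma) \times (0,T)$ by the smoothness provided by Theorem \ref{Derr}, concentrating $\varphi$ at $(y_0, s_0)$ gives $P(x_0, t_0; y_0, s_0) \geq 0$. For $(y,s) \in \Sigma \times (0,T)$ or $s \geq t_0$, $P$ and $K$ are zero by definition; at non-characteristic points with $s < t_0$ we have $W(y) > 0$, so $K = P/W \geq 0$.

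For part (ii), since $\Omega$ is $C^\infty$ and the $X_j$ are smooth, $W$ is bounded on $\partial\Omega$ by a constant depending on $\Omega$ and $X$. Cauchy--Schwarz gives
\[
|P(x,t;y,s)| \leq W(y)\,|XG(x,t;y,s)|, \qquad |K(x,t;y,s)| \leq |XG(x,t;y,s)|
\]
at non-characteristic boundary points. Theorem \ref{HorBound-G}, applied in the interior and passed to the boundary by sending $y' \in \Omega$ to $y \in \partial\Omega \setminus \Sigma$ (using Theorem \ref{Derr} for continuity of $XG$ up to $y$, and observing that the boundary factor $d(y',\partial\Omega)/(d(y',\partial\Omega) \wedge \sqrt{t-s} \wedge R_1)$ equals $1$ for $y'$ close enough to $y$), yields
\[
|XG(x,t;y,s)| \leq \frac{C}{d(x,y) \wedge R_0} \cdot \frac{1}{|B_d(x,\sqrt{t-s})|} \exp\!\left(-\frac{M_1 d(x,y)^2}{t-s}\right).
\]
At characteristic $y$ the kernels vanish by definition. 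Finally, a routine case analysis on whether $d(x,y)$ is $\leq 1$, between $1$ and $R_0$, or larger than $R_0$ (using boundedness of $\Omega$, so that $d(x,y)$ is bounded by the $d$-diameter of $\Omega$) yields $1/(d(x,y) \wedge R_0) \leq C_\Omega/(d(x,y)^2 \wedge \sqrt{t-s} \wedge R_0)$, which produces the stated upper bounds on $P$ and $K$. The principal technical point is justifying the boundary limit of Theorem \ref{HorBound-G}, which is handled by Derridj's regularity at non-characteristic points together with the fact that $P$ and $K$ are extended to zero at characteristic points.
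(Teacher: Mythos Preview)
Your argument is correct and follows essentially the same route as the paper. For part~(i) the paper argues by contradiction (assume $P<0$ at a point, localize, and obtain $H^{D_T}_\varphi\le 0$ via Theorem~\ref{H-P-K}, contradicting the maximum principle), whereas you give the equivalent direct version by concentrating nonnegative test functions; both rest on Theorem~\ref{H-P-K}, the maximum principle, and Derridj's regularity at non-characteristic points. For part~(ii) both you and the paper simply combine \eqref{Bound-P-K} with Theorem~\ref{HorBound-G}; your explicit handling of the boundary limit and the case analysis reconciling $1/(d(x,y)\wedge R_0)$ with the stated denominator $d(x,y)^2\wedge\sqrt{t-s}\wedge R_0$ is more detailed than the paper, which just calls this an ``immediate consequence.''
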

\begin{proof}
We start with the proof of part \emph{i)}. For $(y_0,s_0)\in S_T$ and $r>0$, we consider the cylinder
\[
C^X_r(y_0,s_0)\doteq B(y_0,r)\times (s_0-r^2,s_0+r^2). 
\]
Suppose that for some $(x_0,t_0)\in D_T$ and $(y_{0},s_{0})\in S_T$, with $0<s_0<t_0$, we have $P(x_0,t_0;y_{0},s_{0})=\alpha<0$. This implies that $y_{0}\notin\Sigma$. By the smoothness away from characteristic points, see Theorem \ref{Derr}, there exists a sufficiently small $r>0$ such that $P(x_0,t_0;y,s)\leq \alpha/2$ for every $(y,s)\in C^{X}_{2r}(y_{0},s_{0})\cap\pa_{p}D_T$. We can  assume that $2r< d(y_{0},\Sigma)$. 

We now choose $\varphi\in C^{\infty}(\pa_{p} D_T)$ such that $0\leq \varphi\leq 1$, $\varphi\equiv 1$ on $C^{X}_{r}(y_{0},s_{0})\cap\pa_{p}D_T$ and $\varphi\equiv 0$ outside $C^{X}_{3r/2}(y_{0},s_{0})\cap\pa_{p}D_T$. The maximum principle implies $H^{D_T}_{\varphi}\geq 0$ in $D_T$, where $H^{D_T}_{\varphi}$ is the Perron-Wiener-Brelot solution to Dirichlet problem (\ref{Dirichlet-prob2}) with boundary data $\varphi$. By the Harnack inequality or the strong maximum principle, we must have $H^{D_T}_{\varphi}(x,t)>0$ in $\Omega \times (s_0-r^2/16,T)$. On the other hand, Theorem \ref{H-P-K} gives
\[
H^{D_T}_{\varphi}(x,t)\leq \frac{\alpha}{2}\int^{t}_{0}\int_{B(y_{0},3r/2)\cap\pa\Omega}\varphi(y,s) d\sigma(y)ds\leq 0,
\]
 The proof of part \emph{ii)} is an inmediate consequence of (\ref{Bound-P-K}) and of Theorem \ref{HorBound-G}. The estimate for $K(x,t;y,s)$ follows from (\ref{Def-K}) and from the one for $P(x,t;y,s)$.
\end{proof}

\begin{remark}
\label{Greenbottom}
It is not difficult to see that by the properties of the Green function  we have for every Borel set $E\subset \Omega$ 
\[
\omega^{(x,t)}(E)=\int_EG(x,t;y,0)dy.
\]
\end{remark}

We now fix $(x,t)\in D_T$. For every $\sigma-$measurable $E\subset S_T$ we set 
\[
\nu^{(x,t)}(E)=\int_{E}K(x,t;y,s)d\sigma_{X}(y)ds.
\]
According to Proposition \ref{Positive-P-K}, $d\nu^{(x,t)}$ defines a Borel measure on $S_T$. The following theorem, a consequence of Theorems \ref{Sigma0} and \ref{H-P-K}, is the main result of this section. 

\begin{thm}
\label{Main-P-K}
Let $\Omega\subset\R^{n}$ be a $C^{\infty}$ domain possessing the uniform outer X-ball  condition in a neighborhood of the characteristic set $\Sigma$. Consider $D_T=\Omega\times(0,T)$, with $0< T< \infty$. For every $(x,t)\in D_T$, we have $\omega^{(x,t)}=\nu^{(x,t)}$, i.e., for every $\varphi\in C(\partial_p D_T)$ with $\mathrm{supp}\ \varphi \subset S_T$, one has
\begin{eqnarray*}
H^{D_T}_{\varphi}\left(x,t\right)&=&\int^{t}_{0}{\int_{\partial \Omega}P(x,t;y,s)\varphi\left(y,s\right)d\sigma\left(y\right)ds}\\
&=&\int^{t}_{0}{\int_{\partial \Omega}K(x,t;y,s)\varphi\left(y,s\right)d\sigma_{X}\left(y\right)ds}.
\end{eqnarray*} 
 In particular, $d\omega^{(x,t)}$ is absolutely continuous with respect to $d\sigma_{X}\times dt$ and $d\sigma\times ds$ on $S_T = \pa\Omega\times(0,T)$, and for every $((x,t)(y,s))\in D_T\times S_T$ one has 
\begin{equation}
\label{Omega-sigma}
\frac{d\omega^{(x,t)}}{d\sigma_{X}\times dt}(y,s)=K(x,t;y,s), \quad \frac{d\omega^{(x,t)}}{d\sigma\times dt}(y,s)=P(x,t;y,s).
\end{equation}
\end{thm}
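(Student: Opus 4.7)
The plan is to deduce Theorem~\ref{Main-P-K} from Theorem~\ref{H-P-K} by an approximation argument. The two integral identities in the theorem are equivalent via $d\sigma_X = W d\sigma$ and $K = P/W$, and both identities in \eqref{Omega-sigma}, together with the absolute continuity of $d\omega^{(x,t)}$ with respect to $d\sigma_X \times dt$ and $d\sigma \times dt$ on $S_T$, follow by uniqueness of representing measures once we establish
\[
H^{D_T}_\varphi(x,t) = \int_{S_T} K(x,t;y,s) \varphi(y,s) d\sigma_X(y) ds
\]
for every $\varphi \in C(\pa_p D_T)$ with $\mathrm{supp}\,\varphi \subset S_T$.

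First I would regularise $\varphi$: choose $\varphi_k \in C^\infty_0(\pa_p D_T)$ with $\mathrm{supp}\,\varphi_k \subset S_T$, $\|\varphi_k\|_\infty \leq \|\varphi\|_\infty$, each $\varphi_k$ identically zero in a neighborhood $N_k \times (0,T)$ of $\Sigma \times (0,T)$ with $N_k \searrow \Sigma$, and $\varphi_k \to \varphi$ pointwise on $\pa_p D_T \setminus (\Sigma \times (0,T))$. Each $\varphi_k$ is eligible for Theorem~\ref{H-P-K} (it takes the single constant value zero near $\Sigma \times (0,T)$), which yields
\[
H^{D_T}_{\varphi_k}(x,t) = \int_{S_T} K(x,t;y,s) \varphi_k(y,s) d\sigma_X(y) ds.
\]
On the right, the integrand is dominated by $\|\varphi\|_\infty K$, which is integrable since $\int_{S_T} K d\sigma_X dt \leq 1$ by Proposition~\ref{PandK=1}. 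Because $d\sigma_X = W d\sigma$ and $\sigma(\Sigma)=0$ by Theorem~\ref{Sigma0}, the set $\Sigma \times (0,T)$ is $d\sigma_X \times dt$-null, so $\varphi_k \to \varphi$ a.e.\ with respect to $d\sigma_X \times dt$, and dominated convergence gives $\int K\varphi_k d\sigma_X dt \to \int K\varphi d\sigma_X dt$.

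The main obstacle is passing to the limit on the left side, which amounts to showing $\omega^{(x,t)}(\Sigma \times (0,T)) = 0$. I would establish this by a bootstrap: pick $\psi_k \in C^\infty_0(\pa_p D_T)$ with $\mathrm{supp}\,\psi_k \subset S_T$, $0 \leq \psi_k \leq 1$, $\psi_k \equiv 0$ in a neighborhood of $\Sigma \times (0,T)$ shrinking down to $\Sigma \times (0,T)$, and $\psi_k \uparrow 1$ on $S_T \setminus (\Sigma \times (0,T))$. Theorem~\ref{H-P-K} applies to each $\psi_k$ and yields
\[
\int_{\pa_p D_T} \psi_k d\omega^{(x,t)} = \int_{S_T} K \psi_k d\sigma_X dt.
\]
By monotone convergence, the right side tends to $\int_{S_T} K d\sigma_X dt = 1 - \int_\Omega G(x,t;y,0) dy$ by Proposition~\ref{PandK=1}, while the left side tends to $\omega^{(x,t)}(S_T \setminus (\Sigma \times (0,T)))$. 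Since $\omega^{(x,t)}(\pa_p D_T) = H^{D_T}_1(x,t) = 1$ and Remark~\ref{Greenbottom} give $\omega^{(x,t)}(S_T) = 1 - \int_\Omega G(x,t;y,0) dy$, comparison forces $\omega^{(x,t)}(\Sigma \times (0,T)) = 0$.

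With this vanishing in hand, $\varphi_k \to \varphi$ holds $d\omega^{(x,t)}$-a.e. on $\pa_p D_T$, and a final application of dominated convergence gives $H^{D_T}_{\varphi_k}(x,t) = \int \varphi_k d\omega^{(x,t)} \to \int \varphi d\omega^{(x,t)} = H^{D_T}_\varphi(x,t)$. Combining this with the right-side limit yields the integral representation, and hence Theorem~\ref{Main-P-K}. The delicate part is the third paragraph: Theorem~\ref{H-P-K} controls the interaction of $\omega^{(x,t)}$ only with test functions that are constant near $\Sigma \times (0,T)$, and this restricted class has to be leveraged, together with the total mass identity of Proposition~\ref{PandK=1} and the basic identity $\omega^{(x,t)}(\pa_p D_T) = 1$, to rule out any singular concentration of the parabolic measure on the characteristic set.
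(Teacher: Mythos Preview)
Your proof is correct and takes a genuinely different route from the paper's.

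The paper argues at the level of Borel sets: it first shows, for any $\varepsilon>0$, that $\omega^{(x,t)}(U_\varepsilon\times(0,T))<\varepsilon$ for a suitable neighborhood $U_\varepsilon$ of $\Sigma$ (by choosing $U_\varepsilon$ with small $\nu^{(x,t)}$-mass and applying Theorem~\ref{H-P-K} to a cutoff), and then, for an arbitrary Borel $F\subset S_T$, builds smooth cutoffs $\psi_0,\psi_1$ to sandwich $\omega^{(x,t)}(F)$ against $\nu^{(x,t)}(F)$ up to $\varepsilon$-errors, obtaining $\omega^{(x,t)}(F)\le\nu^{(x,t)}(F)$; the reverse inequality is obtained by running the same estimate on $E_\varepsilon\setminus F$ and swapping roles. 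The representation formula is then read off from the measure equality.

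You instead isolate the single obstruction---possible concentration of $\omega^{(x,t)}$ on $\Sigma\times(0,T)$---and kill it with a total-mass identity: applying Theorem~\ref{H-P-K} to an increasing sequence $\psi_k$ vanishing near $\Sigma\times(0,T)$ and comparing the monotone limits with Proposition~\ref{PandK=1} and Remark~\ref{Greenbottom} forces $\omega^{(x,t)}(\Sigma\times(0,T))=0$. After that, the representation formula for general $\varphi$ follows by a straightforward dominated-convergence passage on both sides, and the Radon--Nikodym statements by Riesz uniqueness. Your argument is shorter and avoids the Borel-set $\varepsilon$-sandwich entirely; the paper's approach, on the other hand, yields the measure identity $\omega^{(x,t)}=\nu^{(x,t)}$ on every Borel set directly without appealing to Riesz representation. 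Both rely on the same two ingredients, Theorem~\ref{H-P-K} and Proposition~\ref{PandK=1}, so the difference is organizational rather than one of strength.
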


\begin{proof}
We start with the proof of \eqref{Omega-sigma}. Let $\varepsilon>0$ and $(x,t)\in D_T$. From Theorem \ref{Sigma0} and the estimates for $K$, we have that there exists open sets $\Sigma_{\varepsilon}, U_{\varepsilon}$ such that $\Sigma\subset\Sigma_{\varepsilon}\subset\overline{\Sigma_{\varepsilon}}\subset U_{\varepsilon}$ and $\nu^{(x,t)}(U_{\varepsilon}\times (0,T))<\varepsilon/2$.  Consider a function $\varphi\in C^{\infty}_0(\partial\Omega)$ with $0\leq\varphi\leq 1$ and $\varphi=1$ on $U_{\varepsilon}$, such that the function $\varphi:\overline{D_T}\rightarrow\R$, defined by slightly abusing the notation as $\varphi(y,s)=\varphi(y)$ for every $(y,s)\in\overline{D_T}$, satisfies $\nu^{(x,t)}(\mathrm{supp}\ \varphi)\leq \frac{3}{4}\varepsilon$. Moreover, for every $k\in\mathbb{N}$ let $\varphi_k:[0,T]\rightarrow\R$ be a smooth function such that $0\leq \varphi_k\leq 1$, $\varphi_k\equiv1$ on $[k^{-1},T-k^{-1}]$ and $\varphi_k\equiv 0$ on $[0,(2k)^{-1}]\cup[T-(2k)^{-1},T]$. We then have
\begin{eqnarray*}
\omega^{(x,t)}\left(U_{\varepsilon}\times \left(\frac{1}{k},T-\frac{1}{k}\right)\right)&=&\int^{T-\frac{1}{k}}_\frac{1}{k}\int_{U_{\varepsilon}}d\omega^{(x,t)}(y,s)\leq \int^{T}_0\int_{\partial\Omega}\varphi(y,s)\varphi_n(s)d\omega^{(x,t)}(y,s)\\
&=& H^{D_T}_{\varphi\varphi_k}(x,t)\quad (\text{by Theorem \ref{H-P-K}})\\
&=&\int^t_0\int_{\partial\Omega}K(x,t;y,s)\varphi(y,s)\varphi_k(s)d\sigma_X(y)ds\\
&\leq&\int^t_0\int_{\partial\Omega}K(x,t;y,s)\varphi(y,s)d\sigma_X(y)ds\\
&\leq & \nu^{(x,t)}(\mathrm{supp}\ \varphi)\leq \frac{3}{4}\varepsilon.
\end{eqnarray*}
By letting $k\rightarrow \infty$ we find
\[
\omega^{(x,t)}\left(U_{\varepsilon}\times \left(0,T\right)\right)\leq \frac{3}{4}\varepsilon.
\]

 Let $F \subseteq S_T$ be a Borel set. If $F=S_T$ then \eqref{Omega-sigma} follows from Theorem \ref{PandK=1} and the Remark \ref{Greenbottom}.
 Let us thus assume that $F\subset S_T$.  Once again by the boundedness of the functions $K(\cdot;\cdot)$ and $W(\cdot)$, we can find open sets $E_{\varepsilon},F_{\varepsilon}\subset S_T$ such that $F\subset F_{\varepsilon}\subset \overline{F_{\varepsilon}}\subset E_{\varepsilon}$ and $\nu^{(x,t)}(E_{\epsilon}\setminus F)\leq \varepsilon/2$. Let now $\psi_0,\psi_1\in C^{\infty}(S_T)$ such that $0\leq\psi_0,\psi_1\leq 1$, and for which 
 \begin{eqnarray*}
 \psi_0&\equiv& 1 \ \text{on} \ S_T\setminus U_{\varepsilon}\times (0,T), \quad  \psi_0\equiv 0 \ \text{on}\ \Sigma_{\varepsilon}\times (0,T),\\
  \psi_1&\equiv& 1 \ \text{on} \ F, \quad  \psi_1\equiv 0 \ \text{on}\ S_T\setminus E_{\varepsilon}.
 \end{eqnarray*}
Then, we have
 \begin{eqnarray*}
 \omega^{(x,t)}\left(F\cap \left(\partial\Omega\times \left(\frac{1}{k},T-\frac{1}{k}\right)\right)\right)&\leq &\omega^{(x,t)}(U_{\varepsilon})+\omega^{(x,t)}\left(F\setminus U_{\varepsilon}\cap \left(\partial\Omega\times \left(\frac{1}{k},T-\frac{1}{k}\right)\right)\right)\\
 &\leq & \frac{3}{4}\varepsilon+\int_{S_T}\psi_0(y,s)\psi_1(y,s)\varphi_k(s)d\omega^{(x,t)}(y,s)\\
 &=&\frac{3}{4}\varepsilon+H^{D_T}_{\psi_0\psi_1\varphi_k}(x,t)\quad (\text{by Theorem \ref{H-P-K}})\\
 &=& \frac{3}{4}\varepsilon+\int^t_0\int_{\partial\Omega}\psi_0(y,s)\psi_1(y,s)\varphi_k(s)K(x,t;y,s)d\sigma_X(y)ds\\
  &\leq& \frac{3}{4}\varepsilon+\int^t_0\int_{\partial\Omega}\psi_0(y,s)\psi_1(y,s)K(x,t;y,s)d\sigma_X(y)ds\\
 &\leq &\frac{3}{4}\varepsilon+\nu^{(x,t)}(E_{\varepsilon})\\
 &\leq &\frac{3}{4}\varepsilon +\nu^{(x,t)}(F)+\nu^{(x,t)}(E_{\varepsilon}\setminus F)< \nu^{(x,t)}(F)+\frac{5}{4}\varepsilon.
 \end{eqnarray*}
 Since $\varepsilon>0$ is arbitrary, we conclude by letting $k\rightarrow\infty$ that $\omega^{(x,t)}(F)\leq \nu^{(x,t)}(F)$. The argument can be repeated with the set $E_{\varepsilon}\setminus F$ instead of $F$. Hence, we obtain $\omega^{(x,t)}(E_{\varepsilon}\setminus F)\leq \nu^{(x,t)}(E_{\varepsilon}\setminus F)$. From this we conclude,  by exchanging the role of $\omega^{(x,t)}$ and $\nu^{(x,t)}$ in the above computations, that $\nu^{(x,t)}(F)\leq\omega^{(x,t)}(F)$. This proves (\ref{Omega-sigma}).
 
 From (\ref{Omega-sigma}) the rest of the proof easily follows. By the definition of $L-$parabolic measure, we have for  $\varphi\in C(\partial_p D_T)$, with  $\mathrm{supp}\ \varphi \subset S_T$, that
 \[
 H^D_{\varphi}(x,t)=\int_{S_T}\varphi(y,s)d\omega^{(x,t)}(y,s).
 \]
 We conclude from (\ref{Omega-sigma}) that $d\omega^{(x,t)}(y,s)=K(x,t;y,s)d\sigma_X(y)ds$, and this completes the proof  of the theorem.
\end{proof}

\section{Reverse H\"older inequalities for the Poisson kernel}

This section is devoted to proving the main results of this paper, namely Theorems \ref{HolderK}, \ref{NonTanK},
\ref{HolderP} and \ref{NonTanP}. In the course of the proofs we will need some basic results about $NTA$ domains
from the papers \cite{FrG} and \cite{M1}, see also \cite{FGGMN}. We begin by recalling the relevant definitions.

\begin{definition}
We say that $\Omega$ is an non-tangentially accessible domain ($NTA$ domain)
if there exists $M$, $r_{0}>0$ for which:

\begin{enumerate}
\item (Interior corkscrew condition) For any $Q\in\partial\Omega$ and $r\leq
r_{0}$ there exists $A_{r}\left(Q\right)\in\Omega$ such that $\frac{r}{M}
\leq d\left(A_{r}\left(Q\right),Q\right)\leq r$ and $d\left(A_{r}\left(Q
\right),\partial \Omega\right)>\frac{r}{M}$.(This implies that $
B_{d}\left(A_{r}\left(Q\right),\frac{r}{2M}\right)$ is $\left(3M,X\right)$-nontangential.)

\item (Exterior corkscrew condition) $\Omega^{c} = \mathbb{R}
^n\setminus\Omega$ satisfies property (1).

\item (Harnack chain condition) For any $\ve>0$ and $x,y\in\Omega$ such
that $d\left(x,\partial \Omega\right)>\ve$, $d\left(y,\partial
\Omega\right)>\ve$, and $d\left(x,y\right)<2^{k}\ve$, there exists
a Harnack chain joining $x$ to $y$ of length $Mk$ and such that the diameter of
each ball is bounded from below by $M^{-1}\min\left\{d\left(x,\partial\Omega
\right),d\left(y,\partial\Omega\right)\right\}$.
\end{enumerate}
\label{NTA-def}
\end{definition}

\begin{lemma}\label{L:cork}
Let $\Omega\subset \R^{n}$ be an $NTA$ domain. Then, there exist positive constants $C,
R_1$, depending on the $NTA$ parameters of $\Omega$, such that for every
$y\in \pa \Om$ and every $0<r<R_1$ one has,
\[
C  |B_d(y,r)|\leq \min \{|\Om\cap B_d(y,r)|,|\Om^c \cap B_d(y,r)|\}
\leq C^{-1} |B_d(y,r)|.
\]
In particular, every $NTA$ domain has outer positive $d$-density and therefore, in view of Lemma \ref{Dirichlet0}, given $f\in C(\partial_p D_T)$,
there exists a unique solution $u\in\Gamma^{2}(D_T)\cap
C(D_T\cup\partial_p D_T)$ to the Dirichlet problem \eqref{dirp}.
In particular, $D_T$ is regular.
\end{lemma}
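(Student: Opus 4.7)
The plan is that the upper bound is trivial, since both $\Om\cap B_d(y,r)$ and $\Om^c\cap B_d(y,r)$ are subsets of $B_d(y,r)$, so that any constant $C^{-1}\ge 1$ works. The content of the statement lies in the lower bounds, which I would derive directly from the corkscrew conditions combined with the doubling property \eqref{doubling} of Carnot--Carath\'eodory balls.

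For the interior lower bound, I would apply the interior corkscrew condition at scale $r/2$: there is a point $A = A_{r/2}(y)\in\Om$ with $d(A,y)\le r/2$ and $d(A,\pa\Om) > r/(2M)$. The first inequality combined with the triangle inequality gives $B_d(A,r/(2M))\subset B_d(y,r)$ (which uses only $M\ge 1$, automatic in the NTA setting), while the second inequality gives $B_d(A,r/(2M))\subset \Om$. Hence
\[
|\Om\cap B_d(y,r)|\ \ge\ |B_d(A,r/(2M))|.
\]
To compare this with $|B_d(y,r)|$, note that $B_d(y,r)\subset B_d(A,3r/2)$, so iterating the doubling estimate \eqref{doubling} a bounded number of times (the number depending only on $M$ and on the doubling constant $C_1$) yields
\[
|B_d(y,r)|\ \le\ |B_d(A,3r/2)|\ \le\ C_M\,|B_d(A,r/(2M))|,
\]
valid for every $0<r<R_1$, where $R_1$ is chosen small enough that each step of the doubling iteration remains within the local parameter $R_0$ of the doubling theorem. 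This produces the lower bound $|\Om\cap B_d(y,r)|\ge C_M^{-1}|B_d(y,r)|$. Applying the identical argument to $\Om^c$, using this time the exterior corkscrew condition, yields the analogous bound for $|\Om^c\cap B_d(y,r)|$.

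For the second assertion, I would observe that the exterior corkscrew point lies at distance strictly greater than $r/(2M)$ from $\pa\Om$, so the supporting ball constructed above is actually contained in the open exterior $\R^n\setminus\overline{\Om}$. Consequently
\[
|B_d(y,r)\setminus \overline{\Om}|\ \ge\ C_M^{-1}\,|B_d(y,r)|,
\]
which is precisely the uniform outer positive $d$-density condition \eqref{cone}. The regularity of $D_T=\Om\times(0,T)$ for the Dirichlet problem \eqref{dirp} then follows immediately from Lemma \ref{Dirichlet0}. I do not anticipate a substantive obstacle here: the argument is a routine combination of the defining corkscrew conditions for NTA domains with the doubling property of $d$-balls, and the only quantitative point is the calibration between the corkscrew scale $r/2$ and the target ball $B_d(y,r)$, which is absorbed into the constant $C_M$.
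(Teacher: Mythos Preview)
Your argument is correct and is the standard proof of this fact. Note, however, that the paper does not supply a proof of Lemma~\ref{L:cork}: it is stated without proof, the claim being regarded as a routine consequence of the corkscrew conditions together with the doubling property \eqref{doubling}. Your write-up fills in exactly those details, and the calibration (applying the corkscrew at scale $r/2$ so that the small ball $B_d(A,r/(2M))$ sits inside $B_d(y,r)$, then comparing volumes via iterated doubling) is the natural way to do it. The observation that the exterior corkscrew ball lies in the \emph{open} complement $\R^n\setminus\overline{\Om}$, which is what the outer positive $d$-density condition \eqref{cone} requires, is also correctly handled.
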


We now state a basic non-degeneracy property of the horizontal perimeter measure $d\sigma_X$. The proof can be found in Theorem 8.3 in \cite{CGN4}.
\begin{thm}
\label{Basic-ND}
Let $\Omega\subset\Rn$ be an $NTA$ domain of class $C^2$, then there exist $C^\star, R_1 > 0$,
depending on $\Om, X$ and on the $NTA$ parameters of $\Omega$, such that for every $y\in\pa\Omega$ and every $0<r<R_1$
\[
\sigma_X(\pa\Omega\cap B(y,r))\geq C^\star \frac{\left|B_d(y,r)\right|}{r}.
\]
In particular, $\sigma_X$ is lower $1$-Alfhors according to \cite{DGN2} and $\sigma_X(\pa\Omega\cap B(y,r))>0$.
\end{thm}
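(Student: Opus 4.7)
The plan is to derive this lower Ahlfors bound on $\sigma_X$ by combining the two-sided corkscrew information contained in Lemma \ref{L:cork} with a relative isoperimetric inequality for the horizontal perimeter. Because $\Omega$ is $NTA$, both $\Omega \cap B_d(y,r)$ and $\Omega^c \cap B_d(y,r)$ occupy a definite fraction of the volume of $B_d(y,r)$; this makes $\Omega$ look non-degenerate on both sides of the boundary at every scale $0 < r < R_1$. The relative isoperimetric inequality then converts this bulk non-degeneracy into a lower bound on $P_X(\Omega, \cdot)$.

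First I would recall the relative isoperimetric inequality in the Carnot--Carath\'eodory setting (as established, e.g., in \cite{CDG2}, \cite{GN1}): there exist constants $C_I, R_1 > 0$, depending on $X$ and on a fixed relatively compact neighborhood of $\overline{\Omega}$, such that for every $y \in \mathbb{R}^n$, every $0 < r < R_1$, and every set $E$ of locally finite $X$-perimeter,
\[
\min\bigl\{|E \cap B_d(y,r)|,\, |E^c \cap B_d(y,r)|\bigr\} \;\leq\; C_I \, r \, P_X\!\bigl(E, B_d(y,2r)\bigr).
\]
Applied with $E = \Omega$ and for $y \in \partial \Omega$, this reduces the theorem to showing that the left-hand side is comparable to $|B_d(y,r)|$.

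Next I would invoke Lemma \ref{L:cork}: since $\Omega$ is $NTA$, there is a constant $C > 0$, depending only on the $NTA$ parameters, such that for every $y \in \partial \Omega$ and every $0 < r < R_1$ (shrinking $R_1$ if necessary),
\[
|\Omega \cap B_d(y,r)| \;\geq\; C \, |B_d(y,r)|
\quad\text{and}\quad
|\Omega^c \cap B_d(y,r)| \;\geq\; C \, |B_d(y,r)|.
\]
Inserting these into the relative isoperimetric inequality gives
\[
C \, |B_d(y,r)| \;\leq\; C_I \, r \, P_X\!\bigl(\Omega, B_d(y,2r)\bigr) \;=\; C_I \, r \, \sigma_X\!\bigl(\partial \Omega \cap B_d(y,2r)\bigr).
\]
The local doubling property \eqref{doubling} then yields $|B_d(y,r)| \geq C_1^{-1} |B_d(y,2r)|$, so after rescaling $r \mapsto r/2$ and relabeling constants, I obtain the desired bound
\[
\sigma_X\!\bigl(\partial \Omega \cap B_d(y,r)\bigr) \;\geq\; C^\star \, \frac{|B_d(y,r)|}{r}.
\]

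The main obstacle is making the relative isoperimetric inequality fully quantitative with respect to the metric balls $B_d(y,r)$ (as opposed to Euclidean balls, or abstract enlarged balls), with constants that depend only on $X$ and a neighborhood of $\overline{\Omega}$; this is where the $C^2$ regularity and the local homogeneous dimension $Q$ enter, guaranteeing both the validity of the isoperimetric inequality and the uniformity of the doubling constant $C_1$. Once the isoperimetric inequality is in place with the correct geometry of balls, the $NTA$ corkscrew condition supplies the only missing geometric input and the rest is bookkeeping with doubling.
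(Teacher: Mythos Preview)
Your proposal is correct and is essentially the standard argument: combine the two-sided corkscrew information (Lemma \ref{L:cork}) with the relative isoperimetric inequality for the $X$-perimeter (which follows from the $(1,1)$-Poincar\'e inequality in \cite{GN1}, \cite{CDG2} applied to $u=\chi_\Omega$), then use doubling to pass from $B_d(y,2r)$ back to $B_d(y,r)$. The paper does not give its own proof here but cites Theorem 8.3 in \cite{CGN4}, where exactly this isoperimetric-plus-corkscrew argument is carried out; the $C^2$ hypothesis is used only to guarantee that $\Omega$ has locally finite $X$-perimeter so that $P_X(\Omega,\cdot)=\sigma_X$.
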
 
\begin{corollary}
\label{Basic-ND1}
Let $\Omega\subset\Rn$ be an $NTA$ domain of class $C^2$ satisfying the upper $1-$Ahlfors assumption in iv) of Definition \ref{ADP}. Then, the measure $\sigma_X$ is $1$-Ahlfors, in the sense that there exist $A_1, R_1>0$ depending in the $NTA$ parameters of $\Omega$ and on $A>0$ in iv), such that for every $y\in\pa\Omega$, and every $0<r<R_1$, one has 
\begin{equation}
\label{Ahl-1}
A_1 \frac{\left|B_d(y,r)\right|}{r}\leq \sigma_X(\pa\Omega\cap B_d(y,r))\leq A^{-1}_1 \frac{\left|B_d(y,r)\right|}{r}.
\end{equation}
In particular, the measure $\sigma_X$ is doubling, i.e., there exists $C>0$ depending on $A_1$ and on the constant $C_1$ in (\ref{doubling}), such that
\begin{equation}
\label{Doubling-sigmaX}
\sigma_X(\pa\Omega\cap B_d(y,2r))\leq C\ \sigma_X(\pa\Omega\cap B_d(y,r)),
\end{equation}
for every $y\in\pa\Omega$ and $0<r<R_1$.
\end{corollary}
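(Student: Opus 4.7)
The plan is straightforward and requires no new ideas beyond combining the two one-sided bounds already in hand. First, I would take $R_1$ to be the minimum of the radius $R_0$ appearing in condition iv) of Definition \ref{ADP}, the radius in Theorem \ref{Basic-ND}, and the local doubling radius from \eqref{doubling} (the latter halved so that $B_d(y,2r)$ still lies in the doubling regime). Choose $A_1 := \min\{C^{\star}, A^{-1}\}$, where $C^{\star}$ is the constant in Theorem \ref{Basic-ND} and $A$ is the constant in condition iv). Then the upper bound $\sigma_X(\pa\Omega\cap B_d(y,r)) \leq A\,|B_d(y,r)|/r$ supplied by iv), together with the lower bound $\sigma_X(\pa\Omega\cap B_d(y,r)) \geq C^{\star}\,|B_d(y,r)|/r$ supplied by Theorem \ref{Basic-ND}, immediately yield the two-sided $1$-Ahlfors estimate \eqref{Ahl-1}.

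For the doubling property \eqref{Doubling-sigmaX}, I would chain three inequalities: apply the upper Ahlfors bound at radius $2r$, then the volume doubling \eqref{doubling} of the Carnot-Carath\'eodory balls, and finally the lower Ahlfors bound at radius $r$. Explicitly, for $y\in\pa\Omega$ and $0<r<R_1$,
\[
\sigma_X(\pa\Omega\cap B_d(y,2r)) \leq A_1^{-1}\,\frac{|B_d(y,2r)|}{2r} \leq \frac{A_1^{-1} C_1}{2}\cdot\frac{|B_d(y,r)|}{r} \leq \frac{C_1}{2 A_1^{2}}\,\sigma_X(\pa\Omega\cap B_d(y,r)),
\]
which furnishes a doubling constant depending only on $A_1$ and $C_1$, as claimed.

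There is no substantive obstacle here; the genuine non-trivial content is already packaged in Theorem \ref{Basic-ND}, whose proof is the delicate lower $1$-Ahlfors statement and is only cited from \cite{CGN4}. The one bookkeeping point is ensuring that the radii of validity of the three ingredients (the upper bound in iv), the lower bound in Theorem \ref{Basic-ND}, and the volume doubling \eqref{doubling}) are compatible; this is handled by taking $R_1$ as the minimum of the three and halving when necessary to accommodate the passage from $r$ to $2r$ in the doubling step.
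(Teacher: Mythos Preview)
Your proposal is correct and is exactly the argument the paper has in mind: the corollary is stated without proof precisely because the two-sided estimate \eqref{Ahl-1} is the immediate conjunction of the lower bound from Theorem \ref{Basic-ND} and the upper bound assumed in 4) of Definition \ref{ADP}, and the doubling of $\sigma_X$ follows from sandwiching the volume doubling \eqref{doubling} between these two bounds. The only cosmetic point to watch is that Theorem \ref{Basic-ND} is stated with the $X$-ball $B(y,r)$ rather than the metric ball $B_d(y,r)$, so to be fully rigorous you should invoke the equivalence \eqref{X-balls} together with \eqref{doubling} to pass from one to the other; this only affects the constants.
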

The following results from \cite{FrG}, \cite{FGGMN} and \cite{M1}, namely Lemma \ref{NonVa}, Theorem \ref{Doubling-Cal}, Theorem \ref{Globalcom} and Theorem \ref{Non-Max} below, play a fundamental role in the rest of the paper. All of these results assume that the cross-section $\Om$ of the cylinder $D_T$ is an $NTA-$domain with respect to the intrinsic distance of the system $X$. 

For $(Q,s)\in S$ and $r>0$, we define
\[
\overline{A}_r(Q,s)=(A_r(Q),s+2r^2),
\]
and 
\[
\Delta_r(Q,s)=\partial_p D_T\cap C_r(Q,s),
\]
where $A_r(Q)$ is the corkscrew in the definition of $NTA-$domain.
\begin{lemma}
\label{NonVa}
Let $\left( Q,s\right) \in \partial _{p}D_{T}$ and $r>0$
sufficiently small, depending on $r_{0}$. Then, there exists a constant $C>0$%
, depending on $\mathcal{L}$, $M$, and $r_{0}$ such that 
\begin{equation}
\inf_{C_{r}\left( Q,s\right) \cap D_{T}}\omega ^{\left( x,t\right)
}\left( \Delta _{2r}\left( Q,s\right) \right) \geq C.
\end{equation}
\end{lemma}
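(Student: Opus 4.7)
The plan is to establish the bound in two stages: first at a distinguished ``forward'' space-time point, then propagate it over the full cylinder $C_r(Q,s)\cap D_T$ by parabolic Harnack. Let $u(x,t)=\omega^{(x,t)}(\Delta_{2r}(Q,s))$, which is a non-negative $L$-parabolic function on $D_T$ with $0\leq u\leq 1$. My goal is to prove
\[
u(\overline{A}_r(Q,s))\,=\,\omega^{\overline{A}_r(Q,s)}(\Delta_{2r}(Q,s))\,\geq\, c_0
\]
for a constant $c_0=c_0(L,M,r_0,X)>0$, and then transfer this lower bound to arbitrary $(x,t)\in C_r(Q,s)\cap D_T$.

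For the first step, I would argue via a maximum-principle comparison. Using the exterior corkscrew condition in the NTA definition, there exists a point $\widetilde{A}\in\Omega^{c}$ with $d(\widetilde{A},Q)\leq r$ and $d(\widetilde{A},\partial\Omega)\geq r/M$. I would then use the fundamental solution $p(\,\cdot\,;\widetilde{A},\tau_0)$, with $\tau_0$ chosen slightly below $s-2r^2$, to build an $L$-parabolic function that is bounded below by a universal constant on $\Delta_{2r}(Q,s)$ yet bounded above by $1-c_0$ at $\overline{A}_r(Q,s)$. The Gaussian upper bound from Theorem~\ref{GaussB} controls $p$ on $\partial_pD_T\setminus\Delta_{2r}(Q,s)$, since such boundary points are separated from $(\widetilde{A},\tau_0)$ either in space (by $\gtrsim r$) or in time; the Gaussian lower bound, combined with the fact that $d(\overline{A}_r(Q,s),\widetilde{A})\lesssim r$ and the time separation is $\sim r^{2}$, yields a uniform positive value at $\overline{A}_r(Q,s)$. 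Comparing with the complementary function $1-u$ on the parabolic boundary via the maximum principle (Theorem~\ref{strongmax}) then delivers the corkscrew bound.

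For the second step, let $(x,t)\in C_r(Q,s)\cap D_T$. Since $t<s+r^{2}<s+2r^{2}$, the point $\overline{A}_r(Q,s)$ is strictly forward in time from $(x,t)$. When $d(x,\partial\Omega)\gtrsim r$, I would apply the parabolic Harnack inequality (using the interior Schauder/Cauchy machinery of Theorem~\ref{Int Cauchy} and the Gaussian bounds from \cite{KS}) along a Harnack chain of length depending only on $M$ and $r_0$, constructed using the Harnack chain condition of Definition~\ref{NTA-def}, to obtain $u(\overline{A}_r(Q,s))\leq C_1 u(x,t)$. For $(x,t)$ with $d(x,\partial\Omega)$ much smaller than $r$, I would first replace $(x,t)$ by the nearby point $(A_{d(x,\partial\Omega)}(P),t+d(x,\partial\Omega)^{2})$, where $P$ is the closest boundary point to $x$, and then chain that point to $\overline{A}_r(Q,s)$. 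Iterating the corkscrew bound of Step~1 at the scale $d(x,\partial\Omega)$ and using the doubling property \eqref{doubling}, together with a forward Harnack step of parabolic radius $\sim d(x,\partial\Omega)$, yields $u(x,t)\gtrsim c_0$ independently of the distance to the boundary.

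I expect Step~1 to be the main obstacle. The barrier construction is delicate in the sub-Riemannian setting because the intrinsic metric balls and the fundamental solution have less explicit structure than in the Euclidean case. One must be careful that the comparison works \emph{uniformly} on $\partial_pD_T$: both the spatial and temporal separations enter, and the Gaussian constants $M$ in Theorem~\ref{GaussB} must be absorbed without destroying the desired lower bound at $\overline{A}_r(Q,s)$. A secondary subtlety lies in Step~2 for points $(x,t)$ close to $\partial\Omega\cap B_d(Q,r)$, where a naive single application of Harnack fails; the iterative argument above circumvents this using the NTA geometry and the scale-invariance of the Harnack constant.
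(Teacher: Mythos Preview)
The paper does not give its own proof of this lemma; it is quoted from \cite{FrG}, \cite{FGGMN} and \cite{M1}. So there is no in-paper argument to compare against, but your outline has a genuine gap that those references do not share.

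The decisive error is in Step~2. You correctly note that $\overline{A}_r(Q,s)=(A_r(Q),s+2r^2)$ lies strictly \emph{forward} in time relative to every $(x,t)\in C_r(Q,s)\cap D_T$, and then claim the parabolic Harnack inequality yields $u(\overline{A}_r(Q,s))\le C_1\,u(x,t)$. This is backwards. For nonnegative solutions of the forward operator $L$ the Harnack inequality reads $u(x_-,t_-)\le C\,u(x_+,t_+)$ whenever $t_-<t_+$ (with the usual spatial and temporal separation), so here one only obtains $u(x,t)\le C\,u(\overline{A}_r(Q,s))$, an upper bound that is useless for pushing a lower bound backward in time. A lower bound at a single forward reference point cannot be propagated to all of $C_r(Q,s)$ by Harnack chains, and the iterative near-the-boundary variant you sketch inherits the same defect, since each step still appeals to the forward inequality in the wrong direction.

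Step~1 also does not close as written. A scalar multiple of $p(\,\cdot\,;\widetilde{A},\tau_0)$ is large on $\Delta_{2r}(Q,s)$ and decays away from it; to dominate $1-u$ on $\partial_p D_T$ you would need the barrier to be at least $1$ on $\partial_p D_T\setminus\Delta_{2r}(Q,s)$, which is exactly where it is small. Moreover, since $d(A_r(Q),\widetilde{A})\lesssim r$ and the time gap is $\sim r^2$, the heat kernel at $\overline{A}_r(Q,s)$ is comparable to its values on $\Delta_{2r}(Q,s)$, so you do not get the strict drop to $1-c_0$ that your comparison needs.

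The arguments in the cited references avoid both issues by working locally and uniformly in $(x,t)$: one restricts to a sub-cylinder such as $C_{2r}(Q,s)\cap D_T$, uses the maximum principle there to reduce to the $L$-caloric measure of the portion of $\partial_p(C_{2r}(Q,s)\cap D_T)$ lying on $S_T$, and bounds the latter from below for every $(x,t)\in C_r(Q,s)\cap D_T$ using the outer positive $d$-density from Lemma~\ref{L:cork} together with the two-sided Gaussian bounds of Theorem~\ref{GaussB}. No backward-in-time Harnack step is required.
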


The next result states that the  $L-$caloric measure has the so-called doubling property.

\begin{thm}
\label{Doubling-Cal}
There exist a positive constant $C=(X,M,r_{0},\text{diam}$ $\Omega ,T)$
such that for all $(Q,s)\in S$ and $0<r\leq \frac{1}{2}\min
\left\{ r_{0},\sqrt{T-s},\sqrt{s}\right\} $ we have  
\begin{equation}
\omega ^{(x,t)}(\Delta _{2r}(Q,s))\leq C\ \omega ^{(x,t)}(\Delta _{r}(Q,s)),
\end{equation}%
with $d\left( x,Q\right) \leq K\left\vert t-s\right\vert ^{1/2}$ and $
\left\vert t-s\right\vert \geq 16r^{2}.$
\end{thm}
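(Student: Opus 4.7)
The plan is to follow the classical scheme for doubling of caloric measure: lower-bound $\omega^{\cdot}(\Delta_r(Q,s))$ at an interior corkscrew point by the non-degeneracy Lemma \ref{NonVa}, then transport that lower bound to the evaluation point $(x,t)$ by a forward-in-time parabolic Harnack chain, and finally combine with the trivial upper bound $\omega^{(x,t)}(\Delta_{2r}(Q,s))\leq 1$.

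Set $u(x,t)=\omega^{(x,t)}(\Delta_{2r}(Q,s))$ and $v(x,t)=\omega^{(x,t)}(\Delta_r(Q,s))$; both are nonnegative $L$-caloric in $D_T$. First I would dispose of the trivial case: the $L$-parabolic measure $\omega^{(x,t)}$ is supported in $\partial_p D_T\cap\{\tau\leq t\}$, so if $t<s-4r^2$ then $u(x,t)=0$ and there is nothing to show. In view of $|t-s|\geq 16r^2$, the remaining case is $t\geq s+16r^2$. Next, applying Lemma \ref{NonVa} with radius $r/2$ in place of $r$ (admissible since $r\leq r_0/2$) yields a constant $c_0>0$ with
\[
\omega^{(y,\tau)}(\Delta_r(Q,s))\geq c_0\quad\text{for every } (y,\tau)\in C_{r/2}(Q,s)\cap D_T.
\]
Choosing the interior corkscrew $y^{*}=A_{r/2}(Q)$ from Definition \ref{NTA-def}(1) and $\tau^{*}=s+r^2/8$, the point $(y^{*},\tau^{*})$ lies in $C_{r/2}(Q,s)\cap D_T$ (using $r\leq \tfrac{1}{2}\sqrt{T-s}$ and $r\leq \tfrac{1}{2}\sqrt{s}$) with $d(y^{*},\partial\Omega)\geq r/(4M)$, so $v(y^{*},\tau^{*})\geq c_0$.

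The core step is to propagate this lower bound to $(x,t)$, i.e.\ to prove $v(x,t)\geq c_1 v(y^{*},\tau^{*})$ via a parabolic Harnack chain advancing forward in time. The hypotheses $d(x,Q)\leq K\sqrt{|t-s|}$ and $t-\tau^{*}\geq 15r^2$ yield
\[
d(x,y^{*})\leq K\sqrt{t-s}+\tfrac{r}{2}\leq \bigl(K+\tfrac{1}{8}\bigr)\sqrt{t-\tau^{*}},
\]
so the spatial and parabolic scales of the pair $((y^{*},\tau^{*}),(x,t))$ are comparable and controlled solely by $K$. Combining the $NTA$ Harnack chain axiom with the parabolic Harnack inequality of Kusuoka--Stroock, I would construct a chain of at most $N=N(K,M)$ parabolic sub-cylinders of common radius $\rho\asymp \sqrt{t-\tau^{*}}/(K+1)$, each at non-tangential distance from $\partial\Omega$ and advancing time by $\asymp\rho^2$; iterating Harnack across the $N$ links gives $v(x,t)\geq A^{-N} v(y^{*},\tau^{*})$. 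Putting the pieces together,
\[
u(x,t)\leq 1\leq c_0^{-1} v(y^{*},\tau^{*})\leq c_0^{-1} A^{N} v(x,t),
\]
which is the asserted inequality with $C=c_0^{-1}A^{N}$ depending only on $X$, $M$, $r_0$, $K$, $\mathrm{diam}\,\Omega$ and $T$.

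The main obstacle will be implementing this Harnack chain uniformly with a controlled number of steps. At each link one must verify that the parabolic cylinder on which Harnack is applied is contained in $D_T$, stays parabolically non-tangential to $S_T$, and neither pierces the lateral boundary (handled by Definition \ref{NTA-def}(3)) nor overshoots the upper slice $\{t=T\}$ (absorbed by $r\leq \tfrac{1}{2}\sqrt{T-s}$ together with the freedom to shrink $\rho$ for the final links, which inflates $N$ only by a factor depending on $T$). The same bookkeeping makes explicit why the full $NTA$ hypothesis, and not merely the outer positive $d$-density used in Lemma \ref{Dirichlet0} for Dirichlet regularity, is indispensable here: without interior corkscrews at every scale there is no starting point for the chain, and without the $NTA$ Harnack chain condition there is no uniform way to propagate the lower bound on $v$ out to $(x,t)$.
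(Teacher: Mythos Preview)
The paper does not supply its own proof of Theorem~\ref{Doubling-Cal}; it is quoted from the references \cite{FrG}, \cite{FGGMN}, \cite{M1} (see the sentence immediately preceding Lemma~\ref{NonVa}). So there is no ``paper's proof'' to compare against, only the proofs in those sources, which follow the Fabes--Safonov--Salsa scheme.

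Your outline contains a genuine gap. The claim that the parabolic Harnack chain from $(y^{*},\tau^{*})$ to $(x,t)$ has length $N=N(K,M)$ is not supported by the $NTA$ Harnack chain axiom. That axiom (Definition~\ref{NTA-def}(3)) gives a chain of length $\sim M\log_2\bigl(d(x,y^{*})/\epsilon\bigr)$ with $\epsilon<\min\{d(x,\partial\Omega),d(y^{*},\partial\Omega)\}$. The hypothesis $d(x,Q)\leq K\sqrt{t-s}$ imposes no lower bound on $d(x,\partial\Omega)$: the point $x$ may sit arbitrarily close to $\partial\Omega$ near some $Q'\neq Q$, and then the chain length blows up. Worse, in that regime your target inequality $v(x,t)\geq c_1$ with $c_1$ independent of $d(x,\partial\Omega)$ is simply false, since $v=\omega^{(\cdot)}(\Delta_r(Q,s))$ vanishes continuously on $S_T\setminus\Delta_r(Q,s)$; both $u(x,t)$ and $v(x,t)$ tend to zero, and what must be controlled is their ratio. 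The combination ``$u\leq 1$'' plus ``$v\geq c_1$'' cannot yield doubling at such points.

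The proofs in \cite{M1}, \cite{FrG}, \cite{FGGMN} close this gap with additional boundary tools that precede the doubling in the logical order: a Carleson-type estimate (local boundedness of nonnegative solutions vanishing on a surface box) and a two-sided comparison between $\omega^{(x,t)}(\Delta_r(Q,s))$ and the Green function $G(x,t;\overline{A}_r(Q,s))$, together with a backward Harnack inequality. These are exactly what allow one to compare $u$ and $v$ uniformly even when $(x,t)$ is close to $S_T$. Your argument, as written, would be correct only under the extra assumption that $d(x,\partial\Omega)$ is bounded below by a fixed multiple of $\sqrt{t-s}$ (or of $r$); in that restricted setting the chain does have bounded length and the rest of your reasoning goes through.
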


\begin{thm}
\label{Globalcom}
Let $r < \min\left\{r_0/2,\sqrt{(T-s)/4},\sqrt{s/4}\right\}$, $(Q, s)\in S_T$. Let $u$ and
$v$ be two non-negative solutions of $Lu = 0$ in $D_T$, and assume that $u$ and $v$ vanish continuously on $\pa_p D_T\setminus\Delta_{r/2}(Q,s)$. Then, there exists a constant $C=\left(X,M,r_0, \operatorname{diam}(\Omega), T\right)$ such that
\[
u(x_0,T)v(\overline{A}_r(Q,s))\leq Cv(x_0,T)u(\overline{A}_{r}(Q,s)),
\]
where $x_0\in\Omega$ is fixed.
\end{thm}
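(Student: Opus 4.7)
The plan is to establish the two-sided estimate, for any non-negative $L$-caloric function $w$ in $D_T$ that vanishes continuously on $\pa_p D_T\setminus\Delta_{r/2}(Q,s)$,
\begin{equation*}
c_1\,\omega^{(x_0,T)}(\Delta_r(Q,s))\,w(\overline{A}_r(Q,s)) \ \le\ w(x_0,T) \ \le\ c_2\,\omega^{(x_0,T)}(\Delta_r(Q,s))\,w(\overline{A}_r(Q,s)),
\end{equation*}
with constants $c_1,c_2$ depending only on $X$, $M$, $r_0$, $\operatorname{diam}(\Om)$ and $T$. Applying this to $u$ and to $v$ separately and dividing then yields the claim with $C = c_2/c_1$. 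This is the classical parabolic boundary Harnack strategy, adapted to the sub-Riemannian $NTA$ setting in \cite{M1}, \cite{FGGMN} and \cite{FrG}.

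For the upper bound I would use the $L$-parabolic measure representation $w(x_0,T) = \int_{\Delta_{r/2}(Q,s)} w(y,\tau)\,d\omega^{(x_0,T)}(y,\tau)$, which is legitimate since $w$ is continuous on $\overline{D_T}$ with vanishing trace outside $\Delta_{r/2}(Q,s)$. The main input is a parabolic Carleson-type estimate $\sup_{C_{r/2}(Q,s)\cap D_T} w \le C\,w(\overline{A}_r(Q,s))$, obtained by iterating the interior Harnack principle along a Harnack chain anchored at $\overline{A}_r(Q,s)$ (using the $NTA$ property of $\Om$) and using the maximum principle to bound $w$ near $\pa_p D_T$ away from the support of its boundary trace. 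Substituting this bound into the integral and then invoking the doubling property of $L$-parabolic measure (Theorem \ref{Doubling-Cal}) to replace $\Delta_{r/2}$ by $\Delta_r$ produces the desired upper estimate.

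For the lower bound I would introduce the auxiliary caloric function $\psi(x,t):=\omega^{(x,t)}(\Delta_{r/4}(Q,s))$, which vanishes continuously on $\pa_p D_T\setminus\Delta_{r/4}(Q,s)$ and, by Lemma \ref{NonVa}, satisfies $\psi(\overline{A}_r(Q,s))\ge c_0>0$. A local boundary comparison between $w$ and $\psi$, obtained by iterating the Carleson estimate both for $L$ and for its adjoint $L^*$ and thereby showing that the ratio $w/\psi$ has bounded oscillation on nontangential regions near $(Q,s)$, combined with the forward Harnack chain from $\overline{A}_r(Q,s)$ to $(x_0,T)$, yields
\begin{equation*}
w(x_0,T)\ \ge\ c\,\frac{w(\overline{A}_r(Q,s))}{\psi(\overline{A}_r(Q,s))}\,\psi(x_0,T).
\end{equation*}
Since $\psi(x_0,T) = \omega^{(x_0,T)}(\Delta_{r/4}(Q,s))$, doubling (Theorem \ref{Doubling-Cal}) converts this into a lower bound in terms of $\omega^{(x_0,T)}(\Delta_r(Q,s))\,w(\overline{A}_r(Q,s))$, as required.

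The central difficulty is the local boundary comparison of $w$ and $\psi$: one cannot invoke the global comparison itself during its own proof, and instead must proceed by a direct oscillation argument using the Carleson estimate in both the $L$- and $L^*$-directions, which is the technical heart of the arguments in \cite{FrG} and \cite{M1}. The specific hypotheses on $r$ guarantee that the forward Harnack chain from $\overline{A}_r(Q,s)$ can be constructed with time strictly increasing toward $(x_0,T)$ (using $r^2<(T-s)/4$) and without crossing the initial boundary $\Omega\times\{0\}$ (using $r^2<s/4$), so that no additional factor depending on $r$ enters the constants, and all radius-doubling losses are absorbed by a finite number of applications of Theorem \ref{Doubling-Cal} depending only on $\operatorname{diam}(\Om)$ and $T$.
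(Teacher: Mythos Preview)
The paper does not prove Theorem~\ref{Globalcom}; it is quoted without proof as one of the foundational inputs taken from \cite{FrG}, \cite{FGGMN} and \cite{M1} (see the sentence preceding Lemma~\ref{NonVa}). So there is no ``paper's own proof'' to compare against.

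Your outline is essentially the strategy carried out in those references: reduce to a two-sided estimate $w(x_0,T)\approx \omega^{(x_0,T)}(\Delta_r(Q,s))\,w(\overline{A}_r(Q,s))$, get the upper bound from the Carleson estimate plus the Poisson representation, and get the lower bound via comparison with the auxiliary caloric function $\psi(x,t)=\omega^{(x,t)}(\Delta_{r/4}(Q,s))$ together with Lemma~\ref{NonVa} and doubling. One caveat: your description of the lower bound step (``showing that the ratio $w/\psi$ has bounded oscillation on nontangential regions'') is exactly the local boundary Harnack inequality, and it is the nontrivial core of the argument rather than a consequence of iterated Carleson estimates alone; in the cited works it is obtained through a delicate iteration that also uses H\"older continuity at the boundary and the backward Harnack inequality for the adjoint Green function. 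As a high-level sketch your proposal is correct, but that step would need the full machinery of \cite{M1} or \cite{FrG} to be made rigorous.
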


For any $(y,s)\in \pa_{p} D_{T}$ and $\alpha>0$ a nontangential region at $(y,s)$ is defined by 
\[
\Gamma_{\alpha}(y,s)=\left\{(x,t)\in D_{T}\mid d_{p}((x,t),(y,s))<(1+\alpha)d_{p}((x,t),\pa_{p}D_{T})\right\},
\]
where 
\[
d_{p}((x,t),(y,s))=\left(d(x,y)^{2}+\left|t-s\right|\right)^{1/2}.
\]
\begin{thm}
\label{Non-Max}
Let $\Omega\subset\R^{n}$ be an $NTA$ domain, and set $D_T=\Omega\times (0,T)$.  Given a point $(x_{1},t_{1})\in D_T$, let $f\in L^{1}(S_T,d\omega^{(x_{1},t_{1})})$ and define
\[
u(x,t)=\int_{S_T}f(y,s)d\omega^{(x,t)}(y,s), \quad (x,t)\in D_T.
\] 
Then, $u$ is $L$-caloric in $D_T$ and:
\begin{enumerate}
	\item $N_{\alpha}(u)(y,s)\leq CM_{\omega^{(x_{1},t_{1})}}(f)(y,s), (y,s)\in S_T$;
	\item $u$ converges non-tangentially a.e. $(d\omega^{(x_{1},t_{1})})$ to $f$. 
\end{enumerate}
\end{thm}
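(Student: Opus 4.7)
The plan is to adapt to the parabolic setting the classical Jerison--Kenig Fatou-type argument for harmonic measure in NTA domains, relying on the four tools gathered just above: the non-degeneracy of caloric measure (Lemma \ref{NonVa}), its doubling (Theorem \ref{Doubling-Cal}), the boundary comparison principle (Theorem \ref{Globalcom}), and the parabolic Harnack inequality. That $u$ is $L$-caloric in $D_T$ is immediate from the definition of caloric measure, since $(x,t)\mapsto d\omega^{(x,t)}$ is caloric in its pole.

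For the pointwise bound (1), I would fix $(y_0,s_0)\in S_T$ and $(x,t)\in\Gamma_{\alpha}(y_0,s_0)$, set $\delta=d_p((x,t),\partial_pD_T)$, and first use the NTA corkscrew and Harnack-chain axioms together with the parabolic Harnack inequality to replace $(x,t)$ by the corkscrew point $\overline A_{c\delta}(y_0,s_0)$, up to a multiplicative constant depending only on $\alpha$ and the NTA parameters. I then decompose $f=\sum_{k\geq 0} f_k$ with $f_0 = f\chi_{\Delta_{c\delta}(y_0,s_0)}$ and $f_k = f\chi_{\Delta_{2^{k}c\delta}(y_0,s_0)\setminus \Delta_{2^{k-1}c\delta}(y_0,s_0)}$ for $k\geq 1$, set $u_k(z,\tau)=\int f_k\,d\omega^{(z,\tau)}$ and $v_k(z,\tau)=\omega^{(z,\tau)}(\mathrm{supp}\,f_k)$, and apply Theorem \ref{Globalcom} (in both directions, by symmetric use) to the pair $(u_k,v_k)$ relative to the surface ball $\Delta_{2^{k}c\delta}(y_0,s_0)$. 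This yields, at the associated corkscrew, the ratio estimate
\[
\frac{u_k}{v_k}\;\leq\; \frac{C}{\omega^{(x_1,t_1)}(\mathrm{supp}\,f_k)}\int f_k\,d\omega^{(x_1,t_1)} \;\leq\; C\,M_{\omega^{(x_1,t_1)}}(f)(y_0,s_0),
\]
the last inequality being the doubling of $\omega^{(x_1,t_1)}$ (Theorem \ref{Doubling-Cal}). Multiplying through by $v_k$, using a Harnack chain to transfer back to $\overline A_{c\delta}(y_0,s_0)$, and summing on $k$ -- the series converges because the annuli $\mathrm{supp}\,f_k$ are pairwise disjoint, whence $\sum_k v_k\leq 1$ -- delivers (1).

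Part (2) is a standard consequence of (1). For $f\in C(\partial_pD_T)$, $u=H_f^{D_T}$ is continuous up to every regular boundary point and Lemma \ref{L:cork} guarantees that every point of $\partial_pD_T$ is regular, so $u\to f$ nontangentially everywhere. For general $f\in L^1(d\omega^{(x_1,t_1)})$ I would approximate by continuous $g_n\to f$ in $L^1(d\omega^{(x_1,t_1)})$ and combine (1) with the weak-$(1,1)$ property of the $\omega^{(x_1,t_1)}$-maximal operator (valid because $\omega^{(x_1,t_1)}$ is doubling on $S_T$) to obtain $\omega^{(x_1,t_1)}\{N_\alpha(u-u_{g_n})>\lambda\}\leq C\lambda^{-1}\|f-g_n\|_{L^1(d\omega^{(x_1,t_1)})}$, which yields a.e. nontangential convergence in the standard way. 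The main obstacle is matching the dyadic decomposition to the time-directed hypotheses in Theorems \ref{Globalcom} and \ref{Doubling-Cal} (the restrictions $d(x,Q)\lesssim |t-s|^{1/2}$ and $|t-s|\geq 16r^2$ on the position of the pole relative to the surface-ball scale $r$): once $2^k c\delta$ grows comparable to $\sqrt{T-s_0}$ or $\sqrt{s_0}$ the admissibility of $(x_1,t_1)$ for that scale fails, so the series has to be truncated and the residual far-field tail handled separately, either by the trivial $L^\infty$ bound after an initial reduction or by a direct Gaussian tail estimate for the caloric measure density.
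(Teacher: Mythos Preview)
The paper does not prove this theorem: it is quoted from \cite{M1} and \cite{FrG} (see also \cite{FGGMN}). Your outline is the standard Jerison--Kenig scheme carried out in those references, and your treatment of (2) is exactly the one used there.

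Your sketch of (1), however, has a genuine gap at the summation step. With $v_k(\cdot)=\omega^{(\cdot)}(\operatorname{supp} f_k)$, the ratio $u_k(x_1,t_1)/v_k(x_1,t_1)$ is the average of $|f|$ over the \emph{annulus} $A_k$, and doubling of $\omega^{(x_1,t_1)}$ does \emph{not} bound this by $M_{\omega^{(x_1,t_1)}}(f)(y_0,s_0)$: that would require the reverse-doubling inequality $\omega^{(x_1,t_1)}(A_k)\geq c\,\omega^{(x_1,t_1)}(\Delta_{2^k c\delta})$, which is not available. Separately, ``a Harnack chain to transfer back to $\overline{A}_{c\delta}$'' cannot carry the inequality $u_k\leq C\,M(f)\,v_k$ from the corkscrew at scale $2^k c\delta$ down to scale $c\delta$: a chain of length $\sim k$ applied to $u_k$ and $v_k$ separately produces a factor $C^k$, which destroys the summability you claim from $\sum_k v_k\leq 1$.

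What actually closes the argument in the cited papers is one of two equivalent mechanisms. Either one invokes the \emph{local} boundary Harnack principle (also established in \cite{M1}, \cite{FrG}), not just the global Theorem \ref{Globalcom}, so that the ratio $u_k/v_k$ is comparable throughout $C_{2^{k-1}c\delta}\cap D_T$ and in particular at $(x,t)$ itself; or one first proves, by iterating Lemma \ref{NonVa}, the H\"older decay $\omega^{(x,t)}\bigl(S_T\setminus\Delta_{2^k c\delta}(y_0,s_0)\bigr)\leq C\,2^{-k\beta}$ for some $\beta>0$, and combines this with the global comparison applied with $v_k=\omega^{(\cdot)}(\Delta_{2^k c\delta})$ (a ball rather than an annulus). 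With either fix the series converges geometrically, and your handling of the far-field tail is then adequate.
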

Theorem \ref{Globalcom} has the following important consequence.
\begin{thm}
\label{Globalcom-K}
Let $D_T=\Omega\times (0,T)$, where $\Omega\subset\Rn$ be a $ADP_X$ domain and $T>0$. Let $K(\cdot;\cdot)$ be the Poisson kernel defined in (\ref{Def-K}). Fix $x_0\in\Omega$. There exists $r_1>0$ depending on $M$ and $r_0$, and a constant $C=C(X,M,r_0,x_0,T,R_0)>0$, such that given $(Q_0,s_0)\in S_T$, for every $0<r<r_1$ one can find a set $E = E_{(Q_0,s_0),(x_0,T),r}\subset\Delta_r(Q_0,s_0)$, with $\left|E\right|_{d\sigma_X\times dt}=0$, for which
\[
K(x_0,T;y,s)\leq CK(\overline{A}_r(Q_0,s_0);y,s)\omega^{(x_0,T)}\left(\Delta_r(Q_0,s_0)\right)
\]
for every $(y,s)\in\Delta_r(Q,s)\setminus E$.
\end{thm}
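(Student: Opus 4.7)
The plan is to combine the Radon--Nikodym representation $d\omega^{(x,t)} = K(x,t;\cdot,\cdot)\,d\sigma_X\times dt$ from Theorem \ref{Main-P-K} with the parabolic backward boundary Harnack principle (Theorem \ref{Globalcom}), and then pass to the limit using the Lebesgue--Besicovitch differentiation theorem on the space of homogeneous type $(S_T,d_p,d\sigma_X\times dt)$ provided by Corollary \ref{Basic-ND1}.

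First I will fix $r>0$ small enough that Theorem \ref{Globalcom} and Lemma \ref{NonVa} are available, and establish the auxiliary estimate
\[
\omega^{(x_0,T)}(F)\leq C\,\omega^{(\overline A_r(Q_0,s_0))}(F)\,\omega^{(x_0,T)}(\Delta_r(Q_0,s_0))
\]
for every Borel set $F\subset \Delta_{r/2}(Q_0,s_0)$. To see this, apply Theorem \ref{Globalcom} to the pair
\[
u(z,\tau)=\omega^{(z,\tau)}(F),\qquad v(z,\tau)=\omega^{(z,\tau)}(\Delta_{r/2}(Q_0,s_0)),
\]
which are both non-negative $L$-caloric functions in $D_T$ vanishing continuously on $\pa_p D_T\setminus \Delta_{r/2}(Q_0,s_0)$ thanks to the regularity of every boundary point of an $NTA$ domain (Lemma \ref{L:cork} together with Lemma \ref{Dirichlet0}). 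The resulting inequality reads
\[
\frac{\omega^{(x_0,T)}(F)}{\omega^{(\overline A_r(Q_0,s_0))}(F)} \leq C\, \frac{v(x_0,T)}{v(\overline A_r(Q_0,s_0))}.
\]
A standard application of Lemma \ref{NonVa} at the parabolically shifted point $(Q_0,s_0+2r^2)$, combined with the doubling property of Theorem \ref{Doubling-Cal}, gives the lower bound $v(\overline A_r(Q_0,s_0))\geq C>0$, while the trivial estimate $v(x_0,T)\leq \omega^{(x_0,T)}(\Delta_r(Q_0,s_0))$ handles the numerator. Rearranging yields the displayed bound.

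Next I will specialize to $F=\Delta_\rho(y,s)$ for $(y,s)\in \Delta_{r/2}(Q_0,s_0)$ and $\rho>0$ small enough that $\Delta_\rho(y,s)\subset \Delta_{r/2}(Q_0,s_0)$, and use Theorem \ref{Main-P-K} to rewrite both caloric measures as integrals of $K$ against $d\sigma_X\times dt$. Dividing through by $|\Delta_\rho(y,s)|_{d\sigma_X\times dt}$ and letting $\rho\to 0^+$, the Lebesgue--Besicovitch differentiation theorem---valid because $(S_T, d_p, d\sigma_X\times dt)$ is a space of homogeneous type thanks to Corollary \ref{Basic-ND1} and the trivial doubling of $dt$---gives, for $(y,s)$ off an exceptional null set $E\subset \Delta_{r/2}(Q_0,s_0)$,
\[
K(x_0,T;y,s)\leq C\,K(\overline A_r(Q_0,s_0);y,s)\,\omega^{(x_0,T)}(\Delta_r(Q_0,s_0)).
\]
A routine covering of $\Delta_r(Q_0,s_0)$ by finitely many cylinders $\Delta_{r/2}(Q_i,s_i)$ centered on a dense countable subset, with the same argument applied at each center, will then extend the inequality, modulo enlarging $E$ on a set of $d\sigma_X\times dt$-measure zero, to $d\sigma_X\times dt$-a.e.\ point of $\Delta_r(Q_0,s_0)$.

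The main delicate point I anticipate is the verification of the vanishing hypothesis in Theorem \ref{Globalcom}: one must show that $\omega^{(\cdot)}(F)$ extends continuously by zero across every point of $\pa_p D_T$ outside $\overline F$, which is where the $NTA$ property of $\Om$ is essential (it gives outer positive $d$-density and hence regularity of every boundary point through Lemma \ref{Dirichlet0}), and where one approximates $\mathbf 1_F$ from above by a decreasing sequence of admissible continuous boundary data in order to apply the maximum principle. The matching of radii and the lower bound $v(\overline A_r(Q_0,s_0))\geq C$ require only standard Harnack-chain bookkeeping together with Theorem \ref{Doubling-Cal}, and should not present essential difficulty.
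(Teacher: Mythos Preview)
Your proposal is correct and follows essentially the same route as the paper: apply the global comparison Theorem \ref{Globalcom} to the pair $u=\omega^{(\cdot)}(\Delta_{\hat r}(y,s))$, $v=\omega^{(\cdot)}(\Delta_{r/2}(Q_0,s_0))$, use Lemma \ref{NonVa} to bound $v(\overline A_r(Q_0,s_0))$ from below, divide by $|\Delta_{\hat r}(y,s)|_{d\sigma_X\times dt}$, and differentiate via the doubling property of $\sigma_X$ from Corollary \ref{Basic-ND1}, finally identifying the densities with $K$ through \eqref{Omega-sigma}. The only cosmetic difference is that the paper takes $(y,s)\in\Delta_r(Q_0,s_0)$ directly and applies the comparison with vanishing outside $\Delta_{2r}(Q_0,s_0)$, so your final covering step is unnecessary: simply run the comparison at a doubled radius from the outset.
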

\begin{proof}
Let $(Q_0,s_0)\in S_T$. For each $(y,t)\in\Delta_r(Q,s)$ and $0<\hat{r}<r/2$ set
\begin{equation}
\label{u-v}
u(x,t)=\omega^{(x,t)}(\Delta_{\hat{r}}(y,s)),\quad v(x,t)=\omega^{(x,t)}(\Delta_{r/2}(Q_0,s_0)).
\end{equation}
The functions $u$ and $v$ are $L-$parabolic in $D_T$ and vanish continuously on $S_T\setminus\Delta_{2r}(Q_0,s_0)$. Theorem \ref{Globalcom} gives
\begin{equation}
\label{Globalcom-1}
\frac{u(x_0,T)}{v(x_0,T)}\leq \frac{u(\overline{A}_r(Q_0,s_0))}{v(\overline{A}_r(Q_0,s_0))}.
\end{equation}
In terms of the definitions of $u$ and $v$ we obtain from (\ref{Globalcom-1}) 
\begin{equation}
\label{Globalcom-2}
\frac{\omega^{(x_0,T)}(\Delta_{\hat{r}}(y,s))}{\omega^{(x_0,T)}(\Delta_{r/2}(Q_0,s_0))}\leq C\frac{\omega^{\overline{A}_r(Q_0,s_0)}(\Delta_{\hat{r}}(y,s))}{\omega^{\overline{A}_r(Q_0,s_0)}(\Delta_{r/2}(Q_0,s_0))}.
\end{equation}
Upon dividing by $\left|\Delta_{\hat{r}}(y,s)\right|_{d\sigma_X\times dt}$ in (\ref{Globalcom-2}), we find
\begin{equation}
\label{Globalcom-3}
\frac{\omega^{(x_0,T)}(\Delta_{\hat{r}}(y,s))}{\left|\Delta_{\hat{r}}(y,s)\right|_{d\sigma_X\times dt}}\leq C\frac{\omega^{\overline{A}_r(Q_0,s_0)}(\Delta_{\hat{r}}(y,s))}{\left|\Delta_{\hat{r}}(y,s)\right|_{d\sigma_X\times dt}}\frac{\omega^{(x_0,T)}(\Delta_{r/2}(Q_0,s_0))}{\omega^{\overline{A}_r(Q_0,s_0)}(\Delta_{r/2}(Q_0,s_0))}.
\end{equation}
Using Theorem \ref{NonVa} in the right-hand side of (\ref{Globalcom-3}), we conclude
\begin{equation}
\label{Globalcom-4}
\frac{\omega^{(x_0,T)}(\Delta_{\hat{r}}(y,s))}{\left|\Delta_{\hat{r}}(y,s)\right|_{d\sigma_X\times dt}}\leq C\frac{\omega^{\overline{A}_r(Q_0,s_0)}(\Delta_{\hat{r}}(y,s))}{\left|\Delta_{\hat{r}}(y,s)\right|_{d\sigma_X\times dt}}\omega^{(x_0,T)}(\Delta_{r}(Q_0,s_0))
\end{equation}
We now observe that (\ref{Doubling-sigmaX}) in Corollary \ref{Basic-ND1} allows to obtain a Vitali covering theorem and differentiate the measure $\omega^{(x_0,T)}$ with respect to the measure $d\sigma_X\times dt$. This means that for $d\sigma_X\times dt$-a.e. $(y,s)\in\Delta_r(Q_0,s_0)$ the 
\[
\lim_{\hat{r} \rightarrow 0}\frac{\omega^{(x_0,T)}(\Delta_{\hat{r}}(y,s))}{\left|\Delta_{\hat{r}}(y,s)\right|_{d\sigma_X\times dt}}\quad\text{exists and equals}\quad \frac{d\omega^{(x_0,T)}}{d\sigma_X\times dt}(y,s).
\]
This being said, passing to the limit as $\hat{r}\rightarrow 0^+$ in (\ref{Globalcom-4}), we obtain for $d\sigma_X\times dt$-a.e. $(y,s)\in\Delta_r(Q_0,s_0)$
\[
\frac{d\omega^{(x_0,T)}}{d\sigma_X\times dt}(y,s)\leq C\frac{d\omega^{\overline{A}_r(Q_0,s_0)}}{d\sigma_X\times dt}(y,s)\omega^{(x_0,T)}(\Delta_{r}(Q_0,s_0)).
\]
Since by (\ref{Omega-sigma}) in Theorem \ref{Main-P-K} we know that 
\[
\frac{d\omega^{(x_0,T)}}{d\sigma_X\times dt}(y,s)=K(x_0,T;y,s),\ \ \ \ \frac{d\omega^{\overline{A}_r(Q_0,s_0)}}{d\sigma_X\times dt}(y,s)=K(\overline{A}_r(Q_0,s_0);y,s),
\]
we have reached the desired conclusion. 
\end{proof}

\begin{proof}[Proof of Theorem \ref{HolderK}]
We fix $p>1$, $(x_0,t_0)\in S_T$ and $x_1\in\Omega$. Let $R_1$ be the minimum of the constants in Definitions \ref{Def-X-ball}, \ref{NTA-def} and in Theorem \ref{Globalcom-K}. Moreover, we choose $R_1$ so small that $d(x_0,x_1)>MR_1$. Let $0<r<R_1$. If $A_{r}(x_0)$ is a corkscrew for $x_0$, then by the definition of a corkscrew, the triangle inequality and (\ref{Q-Qx}), it is easy to see that for all $(y,s)\in\Delta_r(x_0,t_0)$
\begin{equation}
\label{Cork-K}
d(A_r(x_0),y)\sim Cr, \quad \left|t_0+2r^2-s\right|\sim Cr^2,
\end{equation}
and
\begin{equation}
\label{Cork-K1}
 \left|B_d(x_0,r)\right|\leq C\left|B_d(A_r(x_0),\sqrt{t_0+2r^2-s})\right|.
\end{equation}
We now have 
\begin{eqnarray*}
&&\left(\frac{1}{\left|\Delta_r(x_0,t_0)\right|_{d\sigma_X\times dt}}\int_{\Delta_r(x_0,t_0)}K(x_1,T; y, s)^p d\sigma_X(y) ds\right)^{\frac{1}{p}}\quad (\text{by (\ref{Omega-sigma})}) \\
&=&\left(\frac{1}{\left|\Delta_r(x_0,t_0)\right|_{d\sigma_X\times dt}}\int_{\Delta_r(x_0,t_0)}K(x_1,T; y, s)^{p-1} d\omega^{(x_1,T)}(y,s)\right)^{\frac{1}{p}} \quad (\text{by Theorem \ref{Globalcom-K}})\\
&\leq & C \left(\frac{\omega^{(x_1,T)}\left(\Delta_r(x_0,t_0)\right)}{\left|\Delta_r(x_0,t_0)\right|_{d\sigma_X\times dt}}\int_{\Delta_r(x_0,t_0)}K(\overline{A}_r(x_0,t_0); y, s)^{p-1} d\omega^{(x_1,T)}(y,s)\right)^{\frac{1}{p}} \quad (\text{by (\ref{Bound-P-K})})\\
&\leq & C\left(\frac{\omega^{(x_1,T)}\left(\Delta_r(x_0,t_0)\right)}{\left|\Delta_r(x_0,t_0)\right|_{d\sigma_X\times dt}}\int_{\Delta_r(x_0,t_0)}\left|XG(\overline{A}_r(x_0,t_0); y, s)\right|^{p-1} d\omega^{(x_1,T)}(y,s)\right)^{\frac{1}{p}}\quad (\text{by (\ref{HorBound-G}}) \\
&\leq & C\left(\frac{\omega^{(x_1,T)}\left(\Delta_r(x_0,t_0)\right)}{\left|\Delta_r(x_0,t_0)\right|_{d\sigma_X\times dt}}\left(\frac{r}{r^2\left|B_d(x_0,r)\right|}\right)^{p-1}\int_{\Delta_r(x_0,t_0)} d\omega^{(x_1,T)}(y,s)\right)^{\frac{1}{p}}\quad (\text{by $4)$ in Definition \ref{ADP}}) \\
&\leq & \frac{C}{\left|\Delta_r(x_0,t_0)\right|_{d\sigma_X\times dt}}\int_{\Delta_r(x_0,t_0)} d\omega^{(x_1,T)}(y,s)\quad \text{by (\ref{Omega-sigma})}\\
&=&\frac{C}{\left|\Delta_r(x_0,t_0)\right|_{d\sigma_X\times dt}}\int_{\Delta_r(x_0,t_0)} K(x_1,T;y,s)d\sigma_X(y)ds.
\end{eqnarray*}
This concludes the proof of the reverse H\"older inequality.  Regarding the absolute continuity, we already know from (\ref{Omega-sigma}) that $d\omega^{(x_1,T)}$ is absolutely continuous with respect to $d\sigma_X\times dt$. To prove that $d\sigma_X\times dt$ is absolutely continuous with respect to $d\omega^{(x_1,T)}$  we only need to observe that the reverse H\"older inequality for $K$ established above and the doubling property for $d\sigma_X$ from (\ref{Doubling-sigmaX}) in Corollary \ref{Basic-ND1} allows to invoke Lemma 5 in \cite{CF}. 
\end{proof}

We next establish a reverse H\"older inequality for the kernel $P(x,t; y,s)$ defined in \eqref{Def-P}. To state
the main result we modify the class of $ADP_X$ domains in Definition \ref{ADP}. Specifically, we recall the following definition from \cite{CGN4}.
\begin{definition}
\label{sADP}
Given a system $X = \left\{X_1, ...,X_m\right\}$ of smooth vector fields satisfying (\ref{frc}), we
say that a connected bounded open set $\Omega\subset\Rn$ is $\sigma-$\emph{admissible for the Dirichlet problem} (\ref{Dirichlet-prob2}) with
respect to the system $X$, or simply $\sigma-ADP_X$, if:
\begin{enumerate}
	\item  $\Omega$ is of class $C^{\infty}$.
	\item  $\Omega$ is non-tangentially accessible (NTA) with respect to the Carnot-Carath\'eodory metric
associated to the system $X$ (see Definition \ref{NTA-def});
  \item  $\Omega$ satisfies a uniform tangent outer X-ball condition (see Definition \ref{Def-X-ball});
  \item  The horizontal perimeter measure is upper $1$-Ahlfors regular. This means that there exist
$B,R_o > 0$ depending on $X$ and $\Omega$ such that for every $x\in\pa\Omega$ and $0<r<R_o$ one has
\[
\left(\max_{y\in \pa\Omega\cap B_d(x,r)}\right)\sigma\left(\pa\Omega\cap B_d(x,r)\right)\leq B \frac{\left|B_d(x, r)\right|}{r}.
\]
\end{enumerate}
\end{definition}
The proof of the following result can be found in \cite{CGN4}.
\begin{thm}
\label{Doubling-sigma}
Let $\Omega\subset\Rn$ be a $\sigma-ADP_X$ domain. There exists $C,R_1>0$, depending on the $\sigma-APD_X$ parameters of $\Omega$, such that for every $y\in\pa\Omega$ and $0<r<R_1$,
\[
\sigma(\pa\Omega\cap B_d(y,2r))\leq C\sigma(\pa\Omega\cap B_d(y,r)).
\]
\end{thm}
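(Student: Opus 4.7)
The plan is to prove the doubling property of the standard surface measure $\sigma$ on $\partial\Omega$ by sandwiching it between two quantities that we already know how to control. The key insight is that the two hypotheses available on a $\sigma$-$ADP_X$ domain, namely the \emph{upper} bound in condition $4')$ expressed in terms of $\sigma$ and $W$, and the \emph{lower} $1$-Ahlfors bound on the horizontal perimeter $\sigma_X$ guaranteed by Theorem \ref{Basic-ND} (applicable because $\Omega$ is $NTA$ and $C^\infty$, hence $C^2$), combined with the identity $d\sigma_X = W\, d\sigma$, will give doubling once the factor $\max W$ is controlled using the elementary monotonicity of the maximum.

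More precisely, fix $y\in\partial\Omega$ and $0<r<R_1$, with $R_1$ small enough that Theorem \ref{Basic-ND}, condition $4')$, and the local doubling of $|B_d(\cdot,\cdot)|$ from \eqref{doubling} all apply. Set
\[
M_r = \max_{z\in \partial\Omega \cap B_d(y,r)} W(z), \qquad M_{2r} = \max_{z\in \partial\Omega \cap B_d(y,2r)} W(z).
\]
First I would use $d\sigma_X = Wd\sigma$ to write $\sigma_X(\partial\Omega\cap B_d(y,r)) \le M_r\, \sigma(\partial\Omega\cap B_d(y,r))$, and combine this with the lower Ahlfors bound of Theorem \ref{Basic-ND} to obtain
\[
\sigma(\partial\Omega\cap B_d(y,r)) \;\ge\; \frac{C^\star}{M_r}\,\frac{|B_d(y,r)|}{r}.
\]
(The continuity of $W$ and the positivity $\sigma_X(\partial\Omega\cap B_d(y,r))>0$ from Theorem \ref{Basic-ND} guarantee $M_r>0$, so this division is legitimate.) Next, applying condition $4')$ at radius $2r$ yields
\[
\sigma(\partial\Omega\cap B_d(y,2r)) \;\le\; \frac{B}{M_{2r}}\,\frac{|B_d(y,2r)|}{2r}.
\]

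The final step is to combine these two estimates. Since $B_d(y,r)\subset B_d(y,2r)$, monotonicity of the supremum gives $M_{2r}\ge M_r$, so that $1/M_{2r}\le 1/M_r$; the local doubling \eqref{doubling} for CC-balls provides a constant $C_1$ with $|B_d(y,2r)|\le C_1|B_d(y,r)|$. Chaining these,
\[
\sigma(\partial\Omega\cap B_d(y,2r))\;\le\;\frac{B C_1}{2\, M_r}\,\frac{|B_d(y,r)|}{r}\;\le\;\frac{B C_1}{2\, C^\star}\,\sigma(\partial\Omega\cap B_d(y,r)),
\]
which is the desired doubling inequality with $C = BC_1/(2C^\star)$, a constant depending only on the $\sigma$-$ADP_X$ parameters of $\Omega$ (through $B$, the $NTA$ parameters entering $C^\star$, and the local doubling constant of $X$).

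The only genuinely subtle point, and the one I expect to require care rather than difficulty, is the factor $M_r/M_{2r}$: the argument crucially exploits that this ratio is bounded by $1$, which bypasses any need for pointwise control of the angle function $W$ near characteristic points (where $W$ may degenerate severely). In essence, the degenerate factor $W$ appears on both sides of the sandwich in a way that allows it to cancel favorably. No finer information about the behavior of $W$ is needed, and no analysis near the characteristic set $\Sigma$ is required beyond what is already encoded in condition $4')$ and in Theorem \ref{Basic-ND}.
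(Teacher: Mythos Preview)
Your argument is correct. The paper does not actually supply a proof of this theorem; it simply refers the reader to \cite{CGN4}. Your proof is self-contained and uses only results already available in the present paper: the lower $1$-Ahlfors bound for $\sigma_X$ from Theorem~\ref{Basic-ND} (applicable since a $\sigma$-$ADP_X$ domain is $C^\infty$ and $NTA$), the identity $d\sigma_X = W\,d\sigma$, the balanced-degeneracy hypothesis $4')$, the monotonicity $M_{2r}\ge M_r$, and the doubling property \eqref{doubling} of Carnot--Carath\'eodory balls. Each step is valid, and your observation that $M_r>0$ follows from $\sigma_X(\partial\Omega\cap B_d(y,r))>0$ disposes of the only potential division-by-zero issue. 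The cancellation of the degenerate factor $W$ via the ratio $M_r/M_{2r}\le 1$ is indeed the crux, and you have identified and exploited it cleanly.
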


\begin{proof}[Proof of Theorem \ref{HolderP}]
The relevant reverse H\"older inequality for $P(x_1,t_1;\cdot)$ is proved starting
from the second identity $d\omega^{(x_1,t_1)}= P(x_1,t_1;\cdot)d\left(d\sigma\times dt\right)$ in (\ref{Omega-sigma}) and then arguing in a similar fashion as in
the proof of Theorem \ref{HolderK} but using the non-degeneracy estimate in iv) of Definition \ref{sADP}, instead
of the upper 1-Ahlfors assumption in Definition \ref{ADP}. We leave the details to the interested reader.
\end{proof}

\begin{proof}[Proof of Theorem \ref{NonTanK}]
We start by proving that functions $f\in L^{p}\left(S_T, d\sigma_X\times dt\right)$ are resolutive for the Dirichlet problem \ref{Dirichlet-prob2}. Thanks to Theorem \ref{Brelot} it is enough to prove that $f\in L^1\left(S_T, d\omega^{(x_1,t_1)}\right)$ for some fixed $(x_1,t_1)\in D_T$. From (\ref{Omega-sigma}) and Proposition \ref{Positive-P-K} we have
\begin{eqnarray*}
\int_{S_T}\left|f(y,s)\right|d\omega^{(x_1,t_1)}(y,s)&=&\int_{S_T}\left|f(y,s)\right|K(x_1,t_1;y,s)d\sigma_{X}ds \\
&\leq & \left(\int_{S_T}\left|f(y,s)\right|^{p}d\sigma_{X}ds\right)^{\frac{1}{p}}\left(\int_{S_T}K(x_1,t_1;y,s)^{p'}d\sigma_{X}ds\right)^{\frac{1}{p'}}\\
&\leq & C\left(\int_{S_T}\left|f(y,s)\right|^{p}d\sigma_{X}ds\right)^{\frac{1}{p}}.
\end{eqnarray*}
This shows that $L^{p}\left(S_T, d\sigma_X\times dt\right)\subset L^{1}\left(S_T, d\omega^{(x_1,t_1)}\right)$, and therefore, by Theorem \ref{Brelot}, for each $f\in L^{p}\left(S_T, d\sigma_X\times dt\right)$ the generalized solution $H^{D_T}_f$ exists and it is represented by 
\[
H^{D_T}_f(x,t)=\int_{S_T}f(y,s)d\omega^{(x,t)}(y,s).
\] 
At this point we invoke Theorem \ref{Non-Max} and obtain for every $(y,s)\in S_T$
\begin{equation}
\label{NonMax-1}
N_{\alpha}\left(H^{D_T}_f\right)(y,s)\leq C M_{\omega^{(x_1,t_1)}}(f)(y).
\end{equation}
Moreover, $H^{D_T}_f$ converges non-tangentially $d\omega^{(x_1,t_1)}$-a.e. to $f$. By virtue of Theorem \ref{HolderK}, we also have that $H^{D_T}_f$ converges non-tangentially $d\sigma_X\times dt$-a.e. to $f$. To conclude the proof we need to show that there is a constant $C>0$, depending on $1<p<\infty$, $\Omega$ and $X$, such that 
\[
\left\|N_{\alpha}\left(H^{D_T}_f\right)\right\|_{L^{p}\left(S_T, d\sigma_X\times dt\right)}\leq C\left\|f\right\|_{L^{p}\left(S_T, d\sigma_X\times dt\right)},
\]
for every $f\in L^{p}\left(S_T, d\sigma_X\times dt\right)$. We start by proving the following intermediate estimate
\begin{equation}
\label{NonMax-2}
\left\|M_{\omega^{(x_1,t_1)}}\left(f\right)\right\|_{L^{p}\left(S_T, d\sigma_X\times dt\right)}\leq C\left\|f\right\|_{L^{p}\left(S_T, d\sigma_X\times dt\right)},\quad 1<p\leq\infty.
\end{equation}
Since $p>1$, we choose $\beta$ so that $1<\beta<p$ and fix $(x_1,t_1)\in D_T$ as in Theorem \ref{HolderK}. From (\ref{Omega-sigma}) and the reverse H\"older inequality in Theorem \ref{HolderK}, we have
\begin{eqnarray*}
&&\frac{1}{\omega^{(x_1,t_1)}\left(\Delta_r(x_0,t_0)\right)}\int_{\Delta_r(x_0,t_0)}f(y,s)d\omega^{(x_1,t_1)}(y,s)\\
&\leq& \frac{\left(\int_{\Delta_r(x_0,t_0)}\left|f(y,s)\right|^{\beta}d\sigma_X(y)ds\right)^{\frac{1}{\beta}}}{\omega^{(x_1,t_1)}\left(\Delta_r(x_0,t_0)\right)}\left(\int_{\Delta_r(x_0,t_0)}K(x_1,t_1;y,s)^{\beta'}d\sigma_X(y)ds\right)^{\frac{1}{\beta^{\prime}}}\\
&\leq& C\frac{\left\|f\right\|_{L^{\beta}\left(\Delta_r(x_0,t_0)\right)}}{\omega^{(x_1,t_1)}\left(\Delta_r(x_0,t_0)\right)}\left(\frac{1}{\left|\Delta_r(x_0,t_0)\right|^{\frac{1}{\beta}}_{d\sigma_X\times dt}}\int_{\Delta_r(x_0,t_0)}K(x_1,t_1;y,s)d\sigma_X(y)ds\right)\\
&=& C\left(\frac{1}{\left|\Delta_r(x_0,t_0)\right|_{d\sigma_X\times dt}}\int_{\Delta_r(x_0,t_0)}\left|f(y,s)\right|^{\beta}d\sigma_X(y)ds\right)^{\frac{1}{\beta}}.
\end{eqnarray*}
If we now fix $(y,s)\in S_T$ and take the supremum on both sides of the latter inequality by integrating on every surface box $\Delta_r(x_0,t_0)$ containing $(y,s)$, we obtain
\begin{equation}
\label{Non-Max3}
M_{\omega^{(x_1,t_1)}}(f)(y,s)\leq CM_{\sigma_X}(\left|f\right|^{\beta})(y,s)^{\frac{1}{\beta}}.
\end{equation}
By the doubling property (\ref{Doubling-sigmaX}) in Corollary \ref{Basic-ND1} we know that the space $\left(S_T,d_p(x,t;y,s),d\sigma_Xdt\right)$ is a space of homogeneous type. This allows to use the results in \cite{CW} and invoke the continuity in $L^p\left(S_T,d\sigma_Xdt\right)$ of the Hardy-Littlewood maximal function obtaining
\begin{eqnarray*}
\left\|M_{\omega^{(x_1,t_1)}}f\right\|^p_{L^p\left(S_T,d\sigma_Xdt\right)}&\leq & C\left\|M_{\sigma_X}\left(\left|f\right|^{\beta}\right)^{\frac{1}{\beta}}\right\|^p_{L^p\left(S_T,d\sigma_Xdt\right)}\\
&=&\int_{S_T}M_{\sigma_X}\left(\left|f\right|^{\beta}\right)^{\frac{p}{\beta}}d\sigma_Xdt\leq C\int_{S_T}\left|f\right|^{p}d\sigma_Xdt=C\left\|f\right\|^p_{L^p\left(S_T,d\sigma_Xdt\right)},
\end{eqnarray*}
which proves (\ref{NonMax-2}). The conclusion of the theorem follows from (\ref{NonMax-1}) and (\ref{NonMax-2}).
\end{proof}
\begin{proof}[Proof of Theorem \ref{NonTanP}]
If the domain $\Omega$ is a $\sigma-ADP_X$-domain, instead of a $ADP_X$-domain, then using Theorem \ref{HolderP} instead of Theorem \ref{HolderK} we can establish the solvability of the Dirichlet problem for boundary data in $L^p$ with respect to the standard surface measure. Since the proof of the following result is similar to that of Theorem \ref{NonTanK} (except that we use the second identity in \ref{Omega-sigma} and also Theorem \ref{Doubling-sigma}), we leave the details to the interested reader.
\end{proof}

\end{document}